\numberwithin{equation}{section}
\def\refname{\centerline{Bibliography}}
\newtheorem{theorem}{Theorem}[section]
\newtheorem{definition}[theorem]{Definition}
\newtheorem{lemma}[theorem]{Lemma}
\newtheorem{proposition}[theorem]{Proposition}
\newtheorem{corollary}[theorem]{Corollary}
\theoremstyle{definition}{}
\newtheorem{remark}[theorem]{Remark}
\newtheorem{example}[theorem]{Example}
\numberwithin{equation}{section}
\def\refname{\centerline{Bibliography}}
\title{Weight reduction for cohomological  mod $p$ modular forms over imaginary   quadratic fields}
\author{Adam Mohamed\thanks{Universit\"at Duisburg-Essen, Institut f\"ur Experimentelle Mathematik, Ellernstr 29, 45326 Essen, Germany,  
E-mail: \url{adam.mohamed@uni-due.de}}}
\begin{document}
\maketitle
\begin{abstract}
Let $ F$ be an imaginary quadratic field and  $ \mathcal{O}$  its ring of integers . Let $ \mathfrak{ n} \subset \mathcal{ O} $ be a non-zero ideal and  let $ p> 5$ be a rational inert prime  in $F$ and coprime with $ \mathfrak{ n}.$ Let $ V$ be an irreducible finite dimensional representation of $ \overline{\mathbb{F}}_{p}[{\rm GL}_2(\mathbb{F}_{ p^2})].$
We establish that  a  system of Hecke eigenvalues appearing in the cohomology with coefficients in $ V$ already lives in the cohomology with coefficients in $ \overline{\mathbb{F}}_{p}\otimes det^e$ for some  $ e \geq 0$; except possibly in some few cases.
\end{abstract}
\bigskip
\section{Introduction} 
Let  $ F$ be an imaginary quadratic field with $ \mathcal{ O}$ as its  ring of integers. The class number of $ F$ is denoted as $h.$ Let  $\Gamma$  be a congruence subgroup of $ {\rm GL}_2( \mathcal{ O}).$ 
Let $\sigma$ be the non-trivial element of  ${\rm Gal}( F/ \mathbb{ Q}).$ We consider the representations of $ {\rm GL }_2( \mathcal{ O})$ defined as $ V^{a, b}_{r, s}(\mathcal{O}) =  {\rm Sym }^{r}( \mathcal{O}^2)\otimes det^a \otimes ({\rm  Sym }^s( \mathcal{O}^2))^{\sigma}\otimes (det^b)^\sigma$ where $ a, b ,  r,  s$ are  positive integers. For an  $ \mathcal{O}$-algebra $ A,$ we define $V^{a, b}_{r, s}(A)  := V^{a, b}_{r, s}(\mathcal{O})\otimes_\mathcal{O} A.$
A cohomological modular form of level  $\Gamma$ and weight $ V^{a, b}_{r, s}(A)  $ over $F$ is a class in  $ {\rm  H}^1(\Gamma,  V^{a, b}_{r, s}( A )).$ As in the classical setting, the space $ {\rm  H}^1( \Gamma,  V^{a, b }_{r, s}( A)) $ can be endowed with a structure of Hecke module. The Hecke algebra acting on $ {\rm  H}^1( \Gamma,  V^{a, b }_{r, s}(A)) $ is commutative and has its  elements  indexed over the integral ideals  of $ F.$ So, one can consider eigenclasses\index{eigenclasses} or eigenforms\index{eigenforms} which are eigenvectors for all the Hecke operators  $ T_\mathfrak{a}\index{ $T_\mathfrak{a},\, \text{Hecke operator}$}.$ Hence to such an eigenform corresponds a system of Hecke eigenvalues.

Integral systems of eigenvalues when reduced modulo a prime $ p$ are believed to be related to mod $p$ representations of  Galois groups  as conjectured by Ash et al. in \cite{Ash1}. One instance  of this correspondence being the theorem of Deligne constructing   $l$-adic representations of the absolute Galois group of $ \mathbb{ Q}, $ $ G_\mathbb{ Q} :=  Gal( \overline{\mathbb{Q}}/\mathbb{ Q}), $ via systems of Hecke eigenvalues arising from modular forms over  $ \mathbb{ Q}$. Let $ N$ be a positive integer and  $ \Gamma_0( N)$  a congruence subgroup of $ {\rm SL }_2(  \mathbb{Z}).$ Take $ V$  to be  the $ {\rm  SL}_2( \mathbb{ Z})$-module given as $V := Sym^{k-2}( \mathbb{Z}^2) = \mathbb{Z}[ X, Y]_{k -2},$ the space of homogeneous polynomials of degree $k-2$ over $ \mathbb{Z}$ in two variables  and with $ k $ even. The converse of Deligne's theorem,  Serre's modularity conjecture, which is now a theorem of Khare and Wintenberger,  has been formulated in the language of group cohomology  in \cite{AshW} and the standard conjecture in there relates mod $p$ Galois representations of $ G_\mathbb{ Q}$ to systems of Hecke eigenvalues on $ {\rm  H}^1(\Gamma_0(N), V\otimes_\mathbb{Z}\overline{\mathbb{F}}_p).$  

Next let $N$ and $n$ be  positive integers. In \cite{Ash-Stevens}, it was shown that a system of Hecke eigenvalues occurring in the cohomology of $\Gamma_1( N)$ with coefficients in some $ {\rm GL}_n( \mathbb{ F}_p)$-module also occurs in the cohomology with coefficients in some irreducible $ {\rm GL}_n(\mathbb{ F}_p)$-module.  This fact has some interesting features. In fact it allows one to obtain  a cohomological avatar of the so-called Hasse invariant, see \cite{Edix}. That is,  one can produce congruences between  weight two and higher weight modular forms using cohomological methods.

As for the case of an imaginary quadratic field $F$ of class number one,  then when $ p$ splits in $ F$ and is  coprime with $\mathfrak{n},$ in \cite{Haluk-Seify},  it is established that a Hecke system of eigenvalues occurring in the first cohomology with non-trivial coefficients can be realized in the first cohomology with trivial coefficients. This  should also hold when the class number of $F$ is greater than one.

Let  $ p$ be a rational prime  coprime to $ \mathfrak{ n}$ and inert in $ F.$ Let $ E$ be a finite dimensional representation of ${\rm GL_2(\mathbb{F}_{p^2})}  $ over $ \overline{\mathbb{F}}_p.$ Let $ \Gamma$ be a congruence subgroup of $ {\rm GL_2( \mathcal{ O})}.$ Then a cohomological  mod $ p$ modular form of level $ \Gamma$ and weight $ E$ is defined to be a class in $\rm{H}^1 ( \Gamma, E ).$  As in the classical setting there is a Hecke algebra action on the space $\rm{H}^1(\Gamma, E )$ and  one can consider  systems of Hecke eigenvalues for the space $\rm{H}^1 ( \Gamma, E )$.
Our  aim  will be to say something more precise about systems of Hecke eigenvalues\index{systems of Hecke eigenvalues} in this setting.  We will prove that a system of Hecke eigenvalues living in $ \oplus^h_{i = 1}{\rm  H^1} (\Gamma_{1, [\mathfrak{b}_i]}(\mathfrak{ n} ), M)$ where $ M$ is an irreducible $\overline{\mathbb{ F}}_p[ {\rm GL_2( \mathbb{ F}_{ p^2} )}]$-module also  occurs in $ \oplus^h_{ i = 1}{\rm H}^1( \Gamma_{1, [\mathfrak{b}_i]}(\mathfrak{pn}), \overline{\mathbb{F}}_p\otimes det^e)$ for some $ e \geq 0$ depending on $M;$ except possibly for some cases. See Theorem  \ref{RedThm} for the precise statement.  Here $ \Gamma_{ 1, [\mathfrak{b}_i]}(\mathfrak{n})\index{$\Gamma_{ 1, [\mathfrak{b}_i]}(\mathfrak{n})$, \,\text{congruence subgroups}}$ are some congruence subgroups defined in Section \ref{sec2_3}.

There is an application of Theorem \ref{RedThm} related to  Serre type questions  about mod $p$ Galois representations of the absolute Galois group of $ F.$ When we are dealing with cohomological modular forms mod $p$ with  trivial coefficients $ \overline{\mathbb{F}}_p, $ we shall say that we are in {\it  weight two}\index{weight two}. Let $  G_F :=  \textrm{Gal}( \overline{F}/ F)$ and  let be given 
\begin{displaymath} \rho: G_F \rightarrow {\rm GL}_2(\overline{\mathbb{F}}_p)                                       \end{displaymath}
an irreducible mod $p$ Galois representation\index{Galois representation} of conductor $\mathfrak{n}$.
Let $ Tr$ denotes the trace of a matrix. Then the following questions arise:
\begin{enumerate}\label{Serre'squestions}
\item[(a)] Does there  exist a cohomological Hecke eigenform of some weight  $ V$ and  level $\mathfrak{n}$ with eigenvalues $ \psi(T_\lambda)$ such that  $  Tr( \rho( Frob_\lambda)) = \psi( T_\lambda)$ for all unramified prime ideals $ \lambda \nmid  \mathfrak{ pn}?$ 
\item[(b)] Does there exist  a cohomological Hecke eigenform of weight $ 2$ ($  V = \overline{\mathbb{ F}}_p\otimes det^e $   for some $ e\geq 0$) and level $\mathfrak{pn} $  with $ Tr(\rho( Frob_\lambda)) = \psi( T_\lambda)$ for all unramified prime ideals $ \lambda \nmid  \mathfrak{ pn}?$
\end{enumerate}
As a consequence of Theorem \ref{RedThm}, we shall see that the two questions above are equivalent. See Proposition \ref{SerreA} for the precise statement.
Proposition \ref{SerreA} proves that when investigating Serre type questions as above, it is enough to work in weight two. For example, in \cite{Fi},  some computational investigations of Serre's  conjecture over imaginary quadratic fields were carried out  and the principle illustrated by Proposition \ref{SerreA} was assumed to hold.

Here is our outline. We shall first recall Hecke theory in our context. This is the content of Section \ref{sect2_2}. In Section \ref{sec2_3}, we shall compare some modules.  The main result is proved in Section \ref{sec2_4}.
\subsubsection*{Acknowledgement}
The present article is extracted from the author dissertation which was supervised by Gabor Wiese. The author thanks him for his time and teaching. This work started as one of the FP$6$ European Research Training Networks  ``Galois Theory and Explicit Methods'' project (GTEM; MRTN-CT-2006-035495 ), I  acknowledge their financial support. 
\section{Hecke operators}\label{sect2_2}
We define Hecke operators\index{Hecke! operators} via Hecke correspondences\index{Hecke! correspondences} on hyperbolic $3$-manifolds. 
We start by fixing some notation.
Let  $ F$ be an imaginary quadratic field of class number $ h\geq 1 $. Denote by $ \mathcal{ O}$ its ring of integer and let $ \mathfrak{ n}$ be  an ideal of $ \mathcal{O}.$ The class group of  $F$ is denoted by $ Cl$ and we fix a rational prime $ p$ inert in $F$ and $\mathfrak{p}  = p\mathcal{O}.$ We also assume that $ \mathfrak{ p}$  is  coprime with $\mathfrak{n}.$
Let  $\hat{\mathcal{O}}$ be the profinite completion of $\mathcal{O}: \hat{\mathcal{O}} = \prod_{\mathfrak{q}\neq 0}\mathcal{O}_\mathfrak{q}.$   We will denote the adeles of $ F$ by $ \mathbb{A}, $ and $ \mathbb{ A}_f, \, \mathbb{ A}_\infty$ stand for the finite part and the infinite part of $ \mathbb{A}.$ We write $ G : =  \rm GL_2,$  so that,   $  G(\mathbb{ A}),  G ( F), G(\mathbb{A}_f) $ are the usual linear algebraic groups of  $ 2\times2$ matrices with entries in  $\mathbb{ A},  F, \mathbb{A}_f,  $ respectively. Let  $ \mathbb{ H}_3 :=   G(  \mathbb{  C})/ \mathbb{ C}^*{\rm  U}_2 \cong  \mathbb{C}\times \mathbb{ R}_{ > 0},$ the three dimensional equivalent of the classical Poincar\'{e} upper half plane $ \mathbb{ H}_2 =  G(\mathbb{ R } )/\mathbb{ R}^*{\rm O}_2$.  Here  ${\rm U}_2 $ is the unitary subgroup of $G(\mathbb{C}).$ Let $ K\index{$K$,\,\text{open compact subgroup}}$ be  an open compact subgroup of  $ G(\hat{\mathcal{O}})$ such that  the determinant homomorphism  $$ det: K \rightarrow \hat{\mathcal{ O}}^* $$ is surjective. We  define the following homogeneous space  
\begin{eqnarray*} 
Y_K\index{$Y_K$}  &:= &  G( F)\backslash ( \mathbb{ H}_3\times G( \mathbb{ A}_f)/K)  \\
& = & G( F)\backslash (G(\mathbb{C})/ \mathbb{ C}^*{\rm  U}_2\times G(\mathbb{A}_f )/K) \\  
& = & G( F)\backslash G(\mathbb{A})/ K.U_2. \mathbb{C}^*.
\end{eqnarray*}
By the determinant map we have   $$ Y_K  \twoheadrightarrow F^*\backslash\mathbb{ A}^*/ \hat{\mathcal{ O}}^* \mathbb{ C}^*\cong F^*\backslash\mathbb{ A}^*_f / \hat {\mathcal { O}}^* \cong  Cl.$$
\subsection{Hecke correspondences and Hecke operators}\label{Subsec2_2_1}
Here we shall recall how a sheaf of $ \overline{\mathbb{F}}_p$-modules  on  $ Y_K$ associated to a finite dimensional representation of $ \overline{\mathbb{F}}_p[G(\mathbb{F}_{p^2})]$ is constructed. 
So, let  $\sigma$ be the generator of ${\rm Gal}(F/\mathbb{Q}).$  Let $$ V_{\mathcal{O}} = V^{a, b}_{r, s}(\mathcal{O})\index{$V^{a, b}_{r, s}(\mathcal{O})$} =  {\rm Sym}^{r}(\mathcal{O}^2)\otimes det^a \otimes ({\rm  Sym }^s(\mathcal{O}^2))^{\sigma}\otimes (det^b)^\sigma$$ be an $ \mathcal{O}[G( \mathcal{O})]$-module endowed with the discrete topology. We define $ V^{a, b}_{r, s}( \overline{\mathbb{F}}_p)\index{$V^{a, b}_{r, s}( \overline{\mathbb{F}}_p)$}  : =  V_{\mathcal{O}} \otimes_\mathcal{O} \overline{\mathbb{F}}_p.$
This space  is also endowed with the discrete topology.

On the space  $\mathbb{H}_3 \times G(\mathbb{A}_f)\times V^{a, b}_{r, s}( \overline{\mathbb{F}}_p),$  the  group  $G(F)$ acts on the first two factors from the left and the group $K$ acts on the last two factors from the right. We write these double actions as follows. Let $ (q,  k) \in $   $ G(F)\times K$ and $ (h, g,  v) \in \mathbb{H}_3 \times G(\mathbb{A}_f)\times V^{a, b}_{r, s}( \overline{\mathbb{F}}_p)$  then: $$  (q,  k)*( h, g,  v) := (qh, qgk, k^{-1}.v).$$ 
Taking the quotients of  these actions of $ G(F), \, K$ on $ \mathbb{ H}_3\times G( \mathbb{ A}_f)\times V^{a, b}_{r, s}( \overline{\mathbb{F}}_p)$  yields a locally constant sheaf $ \mathcal{V}_{\overline{\mathbb{ F}}_p}$ of $ \overline{\mathbb{ F}}_p$-vector spaces associated to $V^{a, b}_{r, s}( \overline{\mathbb{F}}_p)$ on  $Y_K.$ More precisely let $ X = G(\mathbb{A})/  U_2 \mathbb{C}^* \cong  \mathbb{ H}_3 \times G(\mathbb{A}_f).$ Under the  assumption that  $K$ acts freely on $X\times V^{a, b}_{r, s}( \overline{\mathbb{F}}_p), $ one has a topological cover  $$ \pi_1: G(F )\backslash (X\times V^{a, b}_{r, s}( \overline{\mathbb{F}}_p))/K  \rightarrow G( F)\backslash X/ K \cong Y_K.$$ And the locally constant  sheaf  $\mathcal{V}_{\overline{\mathbb{ F}}_p}$ on $ Y_K$ is given by the sections of $\pi_1:$ for an open subset $ U $ of $ Y_K,$ we have  $$ \mathcal{V}_{\overline{\mathbb{ F}}_p}( U) = \{ s: U \rightarrow G(F )\backslash (X\times V^{a, b}_{r, s}( \overline{\mathbb{F}}_p))/K; \,\, \pi_1\circ s = id\}.$$
Let $ K'\subset K$ be  another compact open subgroup of  $G(\mathbb{A}_f).$  We have the natural projection $ \psi: Y_{K'} \rightarrow  Y_{K}.$
We define the locally constant sheaf $ \psi^{ -1}\mathcal{V}_{\overline{\mathbb{ F}}_p}$  of  $\overline{\mathbb{F}}_p$-vector spaces on $ Y_{K'}$ as the pull back of the sheaf $ \mathcal{V}_{\overline{\mathbb{ F}}_p}.$  
By functorial properties of sheaf cohomology the map $ \psi$ induces a homomorphism of $\overline{\mathbb{F}}_p$-vector spaces similar to the restriction homomorphism\index{restriction homomorphism} in group cohomology:
$$ res: {\rm H}^r(Y_K,  \mathcal{V}_{\overline{\mathbb{ F}}_p})\rightarrow {\rm H }^r( Y_{K'},  \psi^{-1}\mathcal{V}_{\overline{\mathbb{ F}}_p}).$$
Since $ K'  $ is a subgroup of finite index inside  $ K,$ we have available the transfer map also known as the corestriction map\index{corestriction map} : $$ cor\index{$cor$, \,\text{corestriction map}}: {\rm H }^r( Y_{K'}, \psi^{-1}\mathcal{V}_{\overline{\mathbb{ F}}_p}) \rightarrow {\rm  H}^r( Y_K, \mathcal{V}_{\overline{\mathbb{ F}}_p}).$$

Next let $ g \in   Mat_2( \hat{\mathcal{O}})_{\neq 0}$ be such that all its local factors $  g_\mathfrak{q}$ at almost all the finite  places $ \mathfrak{q}$ including those  dividing $ \mathfrak{pn}$ are $ \left(\begin{smallmatrix} 1 & 0 \\0 & 1 \end{smallmatrix}\right)$ and otherwise $ g_\mathfrak{q}$ are of the form $  \left(\begin{smallmatrix} \pi_\mathfrak{q} & 0 \\0 & 1 \end{smallmatrix}\right)$ or  $ \left(\begin{smallmatrix} \pi^2_\mathfrak{q} & 0 \\0 & 1 \end{smallmatrix}\right)$ with $ \pi_\mathfrak{q}$ a uniformizer of $ \mathcal{O}_\mathfrak{q}.$ 
\begin{remark}
Often one takes $ g\in Mat_2(\hat{\mathcal{O}})$ with the component at only one  finite place $\mathfrak{q}$ away from  $\mathfrak{pn}$ $ g_\mathfrak{q}$ being of the form $ \left(\begin{smallmatrix} \pi_\mathfrak{q} & 0 \\0 & 1 \end{smallmatrix}\right)$ and all the remaining components are the identity matrices.
\end{remark}
We define   $K'_{ g^{-1}} = K \cap g^{-1}Kg$ and $ K'_{ g} = gK g^{-1} \cap K.$  The group isomorphism $$  K'_{ g^{-1}} \cong  K'_{ g} ; \,\, \lambda \mapsto   g\lambda g^{-1}$$  induces the isomorphism  $  g^* : Y_{K'_{ g^{-1}}} \cong  Y_{K'_{g}};  y \mapsto g y.$ We can now form the diagram
$$\begin{CD}
Y_{K'_{ g^{-1}}} @> g^*>>   Y_{K'_{ g}}   \\
@VVs_gV   @VV\tilde{s}_gV   \\
Y_{K}  @. Y_K, 
\end{CD}
$$
where $ s_g$ and $\tilde{s}_g$ are the natural projections.
This  diagram is called a {\it Hecke correspondence}\index{Hecke! correspondence} in light of the classical Hecke correspondence for modular curves. This picture is the essence of Hecke operators on  cohomology,  the notion of which we shall recall the definition in  a moment.
We denote by  $ s^{-1}_g\mathcal{V}_{\overline{\mathbb{F}}_p}, \,\,\tilde{s}^{-1}_g\mathcal{V}_{\overline{\mathbb{F}}_p}$ the sheaves on $ Y_{K'_{ g^{-1}}}$ and $  Y_{ K'_g}$ respectively, obtained as the pull back of the sheaf $\mathcal{V}_{\overline{\mathbb{F}}_p}$ of $\overline{\mathbb{F}}_p$-vector spaces on  $ Y_K.$
Note that we have an isomorphism of sheaves induced by $ g^*, \,\, conj_g:  \tilde{s}^{-1}_g\mathcal{V}_{\overline{\mathbb{F}}_p} \cong s^{-1}_g\mathcal{V}_{\overline{\mathbb{F}}_p}.$ Hence an isomorphism on cohomology 
$$ conj^*_g: {\rm  H}^i( Y_{ K'_{g^{ -1}}} ,s^{-1}_g\mathcal{V}_{\overline{\mathbb{ F}}_p})    \cong {\rm  H}^i ( Y_{K'_{g}}, \tilde{s}^{-1}_g\mathcal{V}_{\overline{\mathbb{ F}}_p})$$ holds.
The Hecke operator $ T_g\index{$T_g$,\,\text{Hecke operator}}$ acting on the $\overline{\mathbb{F}}_p$-vector spaces  ${\rm  H}^i(Y_K, \mathcal{V}_{\overline{\mathbb{ F}}_p})$ is defined by the  following diagram: $$ \begin{CD}  {\rm  H}^i( Y_{ K'_{g^{ -1}}}, s^{-1}_g\mathcal{V}_{\overline{\mathbb{ F}}_p}) @>conj^*_g>> {\rm  H}^i(Y_{K'_g}, \tilde{s}^{-1}_g\mathcal{V}_{\overline{\mathbb{ F}}_p}) \\
@AAresA   @VVcorV  \\
{\rm H}^i(Y_K, \mathcal{V}_{\overline{\mathbb{ F}}_p}) @. {\rm H}^i( Y_K, \mathcal{V}_{\overline{\mathbb{ F}}_p}).
\end{CD}
$$
So we have  $ T_g = cor\circ conj^*_g\circ res.$  It is also known that  $ T_g$ is independent of the choice of the uniformizers $ \pi_\mathfrak{q}$  but in fact depends only on the double coset $KgK\index{$ KgK$,\,\text{double coset}}.$ When $ g\in Mat_2( \hat{\mathcal{O}})$ has local components 
$ \left(\begin{smallmatrix} \pi^2_\mathfrak{q} & 0 \\ 0 &  1\end{smallmatrix}\right) $  at a finite  number of  finite places $ \mathfrak{q}$ away from $ \mathfrak{pn}$ and  the identity otherwise, we shall denote the corresponding Hecke operator as $ S_g\index{$S_g$,\, \text{Hecke operator}}$. For the  sake of our understanding, let us translate the above diagram in group cohomology  and have a more explicit description of the Hecke operator $ T_g.$
\subsection{Comparison with group cohomology and Hecke algebra}\label{Subsection2_2_2}
Let $ \mathfrak{n}$ be a  non-zero ideal of $\mathcal{O}.$
For our purposes, we choose the following representatives of the class group $ Cl\index{$Cl$,\, \text{class group}}$ of $ F.$ By the Chebotarev density theorem, we can choose representatives of the class group    $ [\mathfrak{b}_1] = [ \mathcal{O}],  [\mathfrak{b}_2], \cdots, [\mathfrak{b}_h],$ where for $  i > 1,$  the  $  \mathfrak{b}_i$ are prime ideals coprime with $ \mathfrak{pn}.$
Thus we denote  the class group as $ Cl = \{[\mathfrak{b}_1], \cdots, [\mathfrak{ b}_h]\}.$ Let $ \pi_{\mathfrak{b}_i}$ be a uniformizer of the local ring $ \mathcal{O}_{\mathfrak{b}_i}.$  We define  $ t_1 := ( 1,\cdots,1, 1, 1,\cdots, 1, \cdots) $,  and for $ i > 1, $   $  t_i := (1,\cdots,1, \pi_{\mathfrak{b}_i}, 1,\cdots, 1, \cdots) \in \mathbb{ A}^*_f$, i.e,  $ t_i$ is the idele having $ 1$ at all places expect at the place $ \mathfrak{b}_i$ where we have $\pi_{\mathfrak{b}_i}.$
Via the group homomorphism
\begin{eqnarray*}
\mathbb{ A}^*_f & \rightarrow & Cl \\  (\cdots x_\mathfrak{ q} \cdots) &\mapsto& [ \prod_{ \mathfrak{q}\neq \infty} \mathfrak{ q}^{ v_\mathfrak{ q}( x_\mathfrak{ q})}],
\end{eqnarray*} 
where $ v_\mathfrak{q}$ is the normalized valuation of $ \mathcal{O}_\mathfrak{q}, $ we see that $ t_i$ corresponds to $ \mathfrak{b}_i.$
We define  $g_i  := \left(\begin{smallmatrix} t_i& 0 \\ 0 & 1 \end{smallmatrix}\right),$ i.e,  $ (g_i)_\mathfrak{q} = \left(\begin{smallmatrix} (t_i)_\mathfrak{q}& 0 \\ 0 & 1 \end{smallmatrix}\right).$ Similarly $ g_i$ corresponds to the  class $ [ \mathfrak{b}_i]$ via the determinant map.

From the strong approximation theorem, the topological space $ Y_K$ decomposes into the disjoint union of its connected components as: $$  Y_K =  \amalg^h_{i = 1} \Gamma_{[\mathfrak{b}_i]}\backslash \mathbb{H}_3,$$ where $ \Gamma_{[\mathfrak{b}_i]}\index{$\Gamma_{[\mathfrak{b}_i]}$} :=  G(F) \cap g_iK g^{-1}_i$. This is an arithmetic subgroup of $ G( F).$ We next recall the definition of neatness for subgroups of $ G(\mathbb{A}_f).$ This is the condition to ensure that $ K $ acts freely on $ X\times V^{a, b}_{r, s}( \overline{\mathbb{F}}_p),$ where $ X$ and  $ V^{a, b}_{r, s}( \overline{\mathbb{F}}_p)$ are defined in Subsection \ref{Subsec2_2_1}.
\subsubsection{Neatness}
Let  $\overline{F}$ be an algebraic closure of $F.$ A subgroup $ \Gamma$ of  $ G(F)$ is said to be  {\it neat }\index{neat} if and only if for all $ g \in \Gamma,$ the multiplicative subgroup of  $\overline{F}^*$ generated by all the eigenvalues of $ g$ is torsion free. If $ \Gamma$ is neat then it is torsion free. Let  $ \mathfrak{q}$ be a finite place of $ F$ and consider $ F_\mathfrak{q}.$ We fix an embedding $ \overline{F} \hookrightarrow \overline{F}_\mathfrak{q}.$ Let  $ g = ( g_\mathfrak{q}) \in  G(\mathbb{A}_f)$ and let  $ \Omega_\mathfrak{q}$ be the subgroup of $ \overline{F}^*_\mathfrak{q}$ generated by all the eigenvalues of $ g_\mathfrak{q}.$ One says that $  g$ is neat if only if we have $$ \bigcap_\mathfrak{q}( \overline{F}^*\cap \Omega_\mathfrak{q})_{tor} = \{1\}.$$  A subgroup $ \Omega$ of $ G(\mathbb{A}_f)$ is neat if and only if all its elements are neat.  For  $ g \in G(\hat{\mathcal{O}}),$  if the open compact subgroup   $ K$ of $ G(\hat{\mathcal{O}}) $ is neat then  $G( F)\cap g K g^{-1} $ is also neat. For more on the neatness condition see Borel \cite[p. 117]{Borel}.

So we choose $K$ to be neat so that the  groups $ \Gamma_{[\mathfrak{b}_i]}$ are torsion free. To achieve this, if   $ K = K_1( \mathfrak{n} ),$ the open compact subgroup of level $ \mathfrak{n}$ defined below,  where  the positive  generator of $ \mathfrak{n}\cap \mathbb{Z}$ is greater than $ 3,$ then $  \Gamma_{[\mathfrak{b}_i]}$ are torsion free. This is Lemma 2.3.1 from \cite{Urban}. This being given, the smooth manifolds 
$ \Gamma_{[\mathfrak{b}_i]}\backslash\mathbb{H}_3$ are Eilenberg-McLane spaces\index{Eilenberg-McLane spaces} of type $ K(\Gamma_{[\mathfrak{b}_i]},  1)$ ( this  $ K$ has nothing to do with our open compact subgroup $K $, this is just an unfortunate clash between two pieces of standard notation),  i.e,  $ \pi_1( \Gamma_{[\mathfrak{b}_i]}\backslash\mathbb{H}_3) = \Gamma_{[\mathfrak{b}_i]}$ and $ \pi_n( \Gamma_{[\mathfrak{b}_i]}\backslash\mathbb{H}_3) = 1  $ for $ n> 1.$ 
From  a general comparison  theorem it is known that an isomorphism ${\rm  H}^r(\Gamma_{[\mathfrak{b}_i]}\backslash \mathbb{H}_3,  \mathcal{V}_{\overline{\mathbb{ F}}_p}) = {\rm H}^r( \Gamma_{[\mathfrak{b}_i]}, V^{a, b}_{r, s}( \overline{\mathbb{F}}_p)) $ holds, see \cite{Brown} for details.  Hence we can  write   $$ {\rm  H}^r (Y_K, \mathcal{V}_{\overline{\mathbb{ F}}_p}) = \oplus^h_{i = 1}{\rm H }^r ( \Gamma_{[\mathfrak{b}_i]}\backslash \mathbb{ H}_3,  \mathcal{V}_{\overline{\mathbb{ F}}_p})=\oplus^h_{i = 1} {\rm H }^r ( \Gamma_{[\mathfrak{b}_i]},  V^{a, b}_{r, s}( \overline{\mathbb{F}}_p)).$$ 

Let us further specialize the open compact subgroup $ K.$ 
We define the open compact subgroup of level  $ \mathfrak{n}$ $$ K_1(\mathfrak{n}) = \big\{  \left(\begin{smallmatrix} a & b \\ c & d \end{smallmatrix}\right) \in \prod_{ \mathfrak{ q}\nmid \infty} G( \mathcal{O}_\mathfrak{q}): c, d-1   \in \mathfrak{n}\hat{\mathcal{ O}}\big\}.$$
This is an open compact subgroup which surjects on  $ \hat{\mathcal{O}}^*$ by the determinant map.   The corresponding congruence subgroups  $ G(F) \cap g_i K_1( \mathfrak{ n}) g^{-1}_i $ are denoted as $\Gamma_{1, [\mathfrak{b}_i]}(\mathfrak{n}).$
As already alluded to,  the Hecke operators $ T_g$ do not  act componentwise on the $ \overline{\mathbb{F}}_p$-vector space $ \oplus^h_{i = 1}{\rm H}^r(\Gamma_{1, [\mathfrak{b}_i]},  V^{a, b}_{r, s}( \overline{\mathbb{F}}_p)).$ By this  we mean that  in general $ T_g$ permutes the components when acting on an element from $\oplus^h_{i = 1}{\rm H}^r(\Gamma_{1, [\mathfrak{b}_i]},  V^{a, b}_{r, s}( \overline{\mathbb{F}}_p))$ as we will soon see.
\subsubsection{Some formulas for the Hecke action}\label{SubsecH}
We recall here the formulas defining the Hecke action on group cohomology. To this end, let us first introduce some more notation.
Let  $\mathfrak{q}$ be an integral ideal away from $\mathfrak{pn}.$ We consider the following subset of $ Mat_2(\hat{\mathcal{O}}).$ Define $$\Delta^\mathfrak{q}_1(\mathfrak{n}) = \{\left(\begin{smallmatrix} a & b \\ c & d \end{smallmatrix}\right)\in Mat_2(\hat{\mathcal{O}}): (ad-bc )\hat{\mathcal{O}} = \mathfrak{q}\hat{\mathcal{O}},\left(\begin{smallmatrix} a & b \\ c & d \end{smallmatrix}\right) \equiv \left(\begin{smallmatrix} * & * \\ 0 & 1 \end{smallmatrix}\right) \pmod{\mathfrak{n}} 
\}.$$
The open compact subgroup  $K_1(\mathfrak{n})\index{$K_1(\mathfrak{n})$}$ acts on  $\Delta^\mathfrak{q}_1(\mathfrak{n})\index{$\Delta^\mathfrak{q}_1(\mathfrak{n})$}$ via multiplication: for $ g \in K_1( \mathfrak{n})$ and $ \delta \in \Delta^\mathfrak{q}_1(\mathfrak{ n} ) $ we have $ g\delta \in \Delta^\mathfrak{q}_1(\mathfrak{ n}).$  We have that $\Delta^\mathfrak{q}_1(\mathfrak{ n} )K_1(\mathfrak{n}) = K_1(\mathfrak{n})\Delta^\mathfrak{q}_1(\mathfrak{n}) = \Delta^\mathfrak{q}_1(\mathfrak{n}).$ 
For $\delta \in \Delta^\mathfrak{q}_1(\mathfrak{n})$ we define the subgroup $$ K'_{1, \delta}(\mathfrak{n}) = \delta K_1(\mathfrak{n})\delta^{-1} \cap K_1(\mathfrak{n}) $$ of $ K_1(\mathfrak{n}).$
The subsets  $\Delta^\mathfrak{q}_1(\mathfrak{ n})$  act on any left $ \overline{\mathbb{F}}_p[{\rm GL}_2(\mathbb{F}_{p^2})]$-module via reduction modulo $\mathfrak{p}.$
There is  the following fact that is worth mentioning.
\begin{lemma}\label{lem2-2} 
Let  $\delta \in \Delta^\mathfrak{q}_1( \mathfrak{n})$. Then  there is a bijection between the coset space $ K_1(\mathfrak{n})/ K'_{1, \delta}(\mathfrak{n})$ and the orbit space  $  K_1(\mathfrak{n})\delta K_1(\mathfrak{n})/K_1(\mathfrak{n}) $ given as
\begin{eqnarray*}
K_1(\mathfrak{n})/ K'_{1, \delta}(\mathfrak{n}) &\rightarrow & K_1(\mathfrak{n})\delta K_1(\mathfrak{n})/K_1(\mathfrak{n}) \\
\lambda K'_{1, \delta}(\mathfrak{n})  &\mapsto&  \lambda\delta K_1(\mathfrak{n}).
\end{eqnarray*}
\end{lemma}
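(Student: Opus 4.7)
The plan is to exhibit the assignment $\lambda K'_{1,\delta}(\mathfrak{n}) \mapsto \lambda\delta K_1(\mathfrak{n})$ as a well-defined bijection by checking well-definedness, surjectivity, and injectivity separately. The whole argument rests on the two complementary descriptions of an element $\kappa \in K'_{1,\delta}(\mathfrak{n}) = \delta K_1(\mathfrak{n})\delta^{-1}\cap K_1(\mathfrak{n})$: on the one hand $\kappa \in K_1(\mathfrak{n})$, and on the other $\kappa = \delta k \delta^{-1}$ for some $k\in K_1(\mathfrak{n})$. Conceptually this is just the orbit--stabilizer principle applied to the left action of $K_1(\mathfrak{n})$ on $\delta K_1(\mathfrak{n})/K_1(\mathfrak{n}) \subset G(\mathbb{A}_f)/K_1(\mathfrak{n})$, with $K'_{1,\delta}(\mathfrak{n})$ being recognized as the stabilizer of the base coset $\delta K_1(\mathfrak{n})$.

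For well-definedness, I take two representatives with $\lambda_2 = \lambda_1 \kappa$ for some $\kappa \in K'_{1,\delta}(\mathfrak{n})$ and write $\kappa = \delta k \delta^{-1}$, so that $\lambda_2 \delta = \lambda_1 \delta k$ lies in $\lambda_1 \delta K_1(\mathfrak{n})$. For surjectivity, any coset on the right has the form $\lambda\delta\mu K_1(\mathfrak{n}) = \lambda \delta K_1(\mathfrak{n})$ with $\lambda,\mu\in K_1(\mathfrak{n})$, and is visibly the image of $\lambda K'_{1,\delta}(\mathfrak{n})$. For injectivity, if $\lambda_1\delta K_1(\mathfrak{n}) = \lambda_2\delta K_1(\mathfrak{n})$, then $\lambda_2^{-1}\lambda_1 = \delta k\delta^{-1}$ for some $k\in K_1(\mathfrak{n})$; the left-hand side lies in $K_1(\mathfrak{n})$, the right-hand side lies in $\delta K_1(\mathfrak{n})\delta^{-1}$, and hence $\lambda_2^{-1}\lambda_1$ belongs to the intersection $K'_{1,\delta}(\mathfrak{n})$, so the two cosets on the left coincide.

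I do not anticipate a genuine obstacle: the lemma is a purely formal consequence of the definition of $K'_{1,\delta}(\mathfrak{n})$. The only points that require minor care are keeping the side of multiplication straight (everything here uses left cosets and left action) and being explicit that the semigroup property $\Delta^\mathfrak{q}_1(\mathfrak{n}) K_1(\mathfrak{n}) = K_1(\mathfrak{n})\Delta^\mathfrak{q}_1(\mathfrak{n}) = \Delta^\mathfrak{q}_1(\mathfrak{n})$ recalled just before the lemma ensures that $\lambda\delta$ and all its translates really do lie in the double coset $K_1(\mathfrak{n})\delta K_1(\mathfrak{n})$, so that the map lands where it is advertised to land.
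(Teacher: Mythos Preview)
Your proof is correct and follows essentially the same approach as the paper's own proof. The paper compresses your well-definedness and injectivity checks into the single statement that two elements $\lambda,\lambda'\in K_1(\mathfrak{n})$ have the same image if and only if they lie in the same $K'_{1,\delta}(\mathfrak{n})$-coset, but the underlying computation is identical to yours; your version simply spells out the orbit--stabilizer mechanism more explicitly.
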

\begin{proof}
There is a surjective map $ K_1(\mathfrak{n}) \rightarrow  K_1(\mathfrak{n})\delta K_1(\mathfrak{n})/K_1(\mathfrak{n})$ which sends $ \lambda K'_{1, \delta}(\mathfrak{n})  $ to $ \lambda\delta K_1(\mathfrak{n}) .$ Two distinct elements $\lambda$ and  $ \lambda'$ map to the same orbit if and only of they lie in the same class modulo  $ K'_{1, \delta}(\mathfrak{n}).$
\end{proof}
For $\delta \in  \Delta^\mathfrak{q}_1(\mathfrak{n}),$ there are finitely many $\gamma_j \in \Delta^\mathfrak{q}_1(\mathfrak{n})$ such that  the double coset $ K_1( \mathfrak{n})\delta K_1(\mathfrak{n})$ decomposes as
$$ K_1(\mathfrak{n})\delta K_1(\mathfrak{n}) = \amalg_{j} \gamma_j K_1(\mathfrak{n}).$$ 
Let $ g \in Mat_2(\hat{\mathcal{O}})$ be such that its components at a finite number of finite places $\mathfrak{q}$ away from  $\mathfrak{pn}$ are of the form $\left(\begin{smallmatrix} \pi_\mathfrak{q} & 0 \\ 0 & 1
\end{smallmatrix}\right)$ or $ \left(\begin{smallmatrix} \pi^2_\mathfrak{q} & 0 \\ 0 & 1
\end{smallmatrix}\right) $  where $\pi_\mathfrak{q}$   is a uniformizer of $ \mathcal{O}_\mathfrak{q}$ and are the identity otherwise. When we denote $ \mathfrak{c} = (det(g))$ the ideal corresponding to $ g,$ then $ g \in  \Delta^\mathfrak{c}_1(\mathfrak{n}).$
\begin{lemma}\label{lem2_3_0}
Let $g \in \Delta^\mathfrak{c}_1(\mathfrak{n})$ as above.  Let $g_i$ corresponding to $[\mathfrak{b}_i]$ and $ K_1(\mathfrak{n})$ as above. Then,  for each  $ i$ there exist a unique index $j_i$,   $    1\leq j_i \leq  h, $    matrices  $ k_i = \left(\begin{smallmatrix} u_i  & 0 \\
0 & 1 \end{smallmatrix}\right) \in g_i K_1(\mathfrak{n}) g^{-1}_i $ and $ \beta_i := g_{j_i}g g^{-1}_i k_i = \left(\begin{smallmatrix} y_i & 0 \\
0 & 1\end{smallmatrix}\right)\in G(F)$ such that $ K_1(\mathfrak{n}) gK_1(\mathfrak{n}) = K_1(\mathfrak{ n}) g^{ -1}_{j_i}\beta_i g_{i}K_1(\mathfrak{n}).$
\end{lemma}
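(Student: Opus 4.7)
The plan is to identify $j_i$ via the class group and then to construct $k_i$ and $\beta_i$ explicitly, exploiting the fact that both $g$ and $g_i$ are diagonal so every computation reduces to the abelian idele group.

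First, I would observe that $g = \mathrm{diag}(t,1)$ with $t \in \mathbb{A}_f^*$ having associated ideal $\mathfrak{c}$, so that $g_{j_i} g g_i^{-1} = \mathrm{diag}(t_{j_i} t t_i^{-1}, 1)$, whose upper-left entry corresponds under the idele-ideal dictionary to the ideal $\mathfrak{b}_{j_i} \mathfrak{c} \mathfrak{b}_i^{-1}$. For this matrix to become an element of $G(F)$ after right multiplication by a diagonal matrix with entries in $\hat{\mathcal{O}}^*$, one needs $t_{j_i} t t_i^{-1} \in F^* \hat{\mathcal{O}}^*$, equivalently $[\mathfrak{b}_{j_i} \mathfrak{c} \mathfrak{b}_i^{-1}] = 1$ in $\mathrm{Cl}$. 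Since $\{[\mathfrak{b}_1],\ldots,[\mathfrak{b}_h]\}$ is a complete set of representatives of $\mathrm{Cl}$, this uniquely determines $j_i$ as the representative of the class $[\mathfrak{b}_i][\mathfrak{c}]^{-1}$.

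With $j_i$ so determined, I would write $t_{j_i} t t_i^{-1} = y_i v_i$ with $y_i \in F^*$ and $v_i \in \hat{\mathcal{O}}^*$, and set $u_i := v_i^{-1}$, $k_i := \mathrm{diag}(u_i, 1)$. Because $g_i$ is diagonal and $\hat{\mathcal{O}}^*$ is abelian, $g_i^{-1} k_i g_i = k_i$, so membership of $k_i$ in $g_i K_1(\mathfrak{n}) g_i^{-1}$ is equivalent to membership in $K_1(\mathfrak{n})$; the latter is clear, since $u_i \in \hat{\mathcal{O}}^*$ and the congruence conditions $c = 0 \in \mathfrak{n}\hat{\mathcal{O}}$, $d - 1 = 0 \in \mathfrak{n}\hat{\mathcal{O}}$ hold trivially. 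A direct multiplication then yields $\beta_i = g_{j_i} g g_i^{-1} k_i = \mathrm{diag}(y_i, 1) \in G(F)$, with the required shape.

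The double-coset equality then drops out of the identity $g_{j_i}^{-1} \beta_i g_i = g g_i^{-1} k_i g_i = g k_i$: since $k_i \in K_1(\mathfrak{n})$, the element $g k_i$ lies in $K_1(\mathfrak{n}) g K_1(\mathfrak{n})$, so the two double cosets intersect and hence coincide. The only point that deserves attention is the check that $k_i$ lies in $g_i K_1(\mathfrak{n}) g_i^{-1}$; this is precisely where the commutativity of $\hat{\mathcal{O}}^*$ is used crucially to move the diagonal $k_i$ past $g_i$, and it is also what allows both $k_i$ and $\beta_i$ to retain the prescribed shape with a $1$ in the lower right entry.
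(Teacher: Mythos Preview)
Your proof is correct and follows essentially the same approach as the paper's: pick $j_i$ so that $(\det(g_{j_i} g g_i^{-1}))$ is principal, factor the resulting idele as $y_i v_i$ with $y_i\in F^*$ and $v_i\in\hat{\mathcal{O}}^*$, set $k_i=\mathrm{diag}(v_i^{-1},1)$, and use the commutation of the diagonal matrices to verify both $k_i\in g_iK_1(\mathfrak{n})g_i^{-1}$ and the double-coset equality. Your write-up is in fact slightly more explicit than the paper's about why $j_i$ is unique and why $g_i^{-1}k_ig_i=k_i$.
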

\begin{proof}
For each $i$ let $j_i$ be the unique index such that the ideal  $(det(g_{j_i} g  g^{-1}_{i }))$ is principal. Set then $ \alpha_i :=  g_{j_i} g  g^{-1}_{i}  = \left(\begin{smallmatrix} det(\alpha_i) &  0 \\
0 & 1\end{smallmatrix}\right).$ The ideal  $  (det(\alpha_i)) $ being  principal means that  $det(\alpha_i ) = x_i y_i$ with $ y_i \in F^* $ and $ x_i \in \hat{\mathcal{O}}^*.$ Set $  u_i = x^{-1}_i$ and define $ k_i = \left(\begin{smallmatrix} u_i & 0 \\ 0 & 1 \end{smallmatrix}\right)\in K_1( \mathfrak{n}).$ Then $ k_i \in g_i K_1(\mathfrak{n}) g^{-1}_i$ and   $\beta_i :=  \alpha_i k_i = \left(\begin{smallmatrix}  y_i & 0 \\ 0 & 1\end{smallmatrix}\right) \in G(F).$
Hence for each $i$ there exists a matrix  $$\beta_i \in  g_{j_i}\Delta^\mathfrak{c}_1(\mathfrak{n}) K_1(\mathfrak{n})g^{-1}_{i}\cap G(F) =g_{j_i}\Delta^\mathfrak{c}_1(\mathfrak{n}) g^{-1}_{i}\cap G(F) $$ such that $   K_1(\mathfrak{n}) g K_1(\mathfrak{n}) =  K_1(\mathfrak{n}) g^{-1}_{j_i} \beta_ig_{i}K_1(\mathfrak{n}).$ Indeed, $  g^{-1}_{j_i} \beta_i  g_{i} =  g^{-1}_{j_i} \alpha_i k_i  g_{i} =  g g^{-1}_{ i} k_i g_{i},$ and we observe that we have $ g^{-1}_{i} k_i g_{i} \in K_1( \mathfrak{n}).$ 
\end{proof}
For  $1\leq i \leq h,$ let  $ j_i$ and $ \beta_i$ as given in  the  above lemma. Let $\mathfrak{f}_i :=  (det(\beta_i)) = \mathfrak{b}_{j_i}\mathfrak{b}^{-1}_{i}\mathfrak{c}.$  Define $ \Lambda^{\mathfrak{f}_i}_{1, [\mathfrak{b}_i]}( \mathfrak{n})\index{$\Lambda^{\mathfrak{f}_i}_{1, [\mathfrak{b}_i]}( \mathfrak{n})$} := g_{j_i}\Delta^\mathfrak{c}_1(\mathfrak{n})g^{-1}_{i}\cap G(F).$ Explicitly this is the set
$$ \left\{  \left(\begin{smallmatrix} a   &  b \\ c  & d  \end{smallmatrix}\right) \in G( F): a  \in   \mathfrak{ b}_{j_i}\mathfrak{b}^{-1}_{i}, b  \in \mathfrak{b}_{j_i}, c \in \mathfrak{b}^{-1}_{ i}, d - 1  \in \mathfrak{n}\mathcal{O}; ( ad -bc)\mathcal{O} = \mathfrak{f}_i\right\}.$$
We set $ j := j_i.$ Let $ \alpha \in \Lambda^{\mathfrak{f}_i}_{1, [\mathfrak{b}_i]}( \mathfrak{n})$ ( we have in mind $\beta_i$). 
We consider the following  double coset $  \Gamma_{1, [ \mathfrak{b}_j]}(\mathfrak{n})\alpha \Gamma_{1, [\mathfrak{b}_i]}(\mathfrak{n}).$ This double coset defines a Hecke operator $ T_\alpha$ mapping  $$ {\rm  H}^r(\Gamma_{1, [ \mathfrak{b}_i]}(\mathfrak{n}), V^{a, b}_{r, s}( \overline{\mathbb{F}}_p)) \,\,\text{to}\,\,
 {\rm  H}^r(\Gamma_{1, [\mathfrak{b}_j]}(\mathfrak{n}), V^{a, b}_{r, s}( \overline{\mathbb{F}}_p))$$ as follows.
Firstly  one needs to introduce the following subgroups 
\begin{enumerate} 
\item $  \Gamma'^{,\alpha^{-1}}_{1, [\mathfrak{b}_i]}(\mathfrak{n})\index{$\Gamma'^{,\alpha^{-1}}_{1, [\mathfrak{b}_i]}(\mathfrak{n})$} := \Gamma_{1, [\mathfrak{b}_i]}( \mathfrak{ n})\cap \alpha^{-1}\Gamma_{1, [ \mathfrak{b}_j]}( \mathfrak{ n})\alpha $ 
\item $   \Gamma''^{, \alpha}_{1, [\mathfrak{b}_j]}(\mathfrak{ n})\index{$\Gamma'^{, \alpha}_{1, [\mathfrak{b}_j]}(\mathfrak{ n})$}  :=  \alpha\Gamma'_{1, [ \mathfrak{b}_i]}( \mathfrak{ n})\alpha^{-1} =  \alpha\Gamma_{1, [ \mathfrak{ b}_i]}( \mathfrak{ n})\alpha^{-1} \cap \Gamma_{1, [ \mathfrak{b}_j]}(\mathfrak{n}).$
\end{enumerate}
The operator $ T_\alpha$ is defined as the composition of the following maps:
$$ \begin{CD}  {\rm  H}^r(\Gamma'^{, \alpha^{-1}}_{1, [ \mathfrak{ b}_i]}( \mathfrak{ n}), V^{a, b}_{r, s}( \overline{\mathbb{F}}_p)) @>conj_\alpha>> {\rm  H}^r(\Gamma''^{, \alpha}_{1, [ \mathfrak{b}_j]}( \mathfrak{ n}), V^{a, b}_{r, s}( \overline{\mathbb{F}}_p)) \\
@AAresA   @VVcorV  \\
{\rm  H}^r(\Gamma_{1, [ \mathfrak{b}_i]}( \mathfrak{n}), V^{a, b}_{r, s}( \overline{\mathbb{F}}_p)) @. {\rm  H}^r(\Gamma_{1, [ \mathfrak{b}_j]}( \mathfrak{ n}), V^{a, b}_{r, s}( \overline{\mathbb{F}}_p)).                          \end{CD}
$$
Here $ res$ is the restriction map,  $ conj_\alpha$ is the isomorphism induced by the compatible maps: \begin{align*}
\Gamma''^{, \alpha}_{1, [ \mathfrak{b}_j]}( \mathfrak{n}) &\cong  \Gamma'^{, \alpha^{-1}}_{1, [ \mathfrak{ b}_i]}(\mathfrak{n})  \\ \omega  &\mapsto  \alpha^{-1}\omega\alpha
\end{align*}
and
\begin{align*}
V^{a, b}_{r, s}(\overline{\mathbb{F}}_p) &\rightarrow V^{a, b}_{r, s}( \overline{\mathbb{F}}_p)  \\ v &\mapsto \alpha.v.
\end{align*} 
Here $ cor$ is the corestriction homomorphism.
We explicitly describe $ T_\alpha$ in degree zero and one. In degree zero $T_\alpha$ is given as 
$$ \begin{CD}  {\rm H}^0( \Gamma'^{, \alpha^{-1}}_{1, [ \mathfrak{b}_i] } ( \mathfrak{ n}), V^{a, b}_{r, s}(\overline{\mathbb{F}}_p)) @>v \mapsto \alpha v>> {\rm  H}^0(\Gamma''^{, \alpha}_{1, [ \mathfrak{b}_j]}( \mathfrak{ n}), V^{a, b}_{r, s}( \overline{\mathbb{F}}_p)) \\
@AAv\mapsto vA   @VVv \mapsto \sum_{h  } h vV  \\
{\rm  H}^0(\Gamma_{1, [ \mathfrak{b}_i]}( \mathfrak{n}), V^{a, b}_{r, s}( \overline{\mathbb{F}}_p)) @. {\rm  H}^0(\Gamma_{1, [ \mathfrak{b}_j]}( \mathfrak{ n}), V^{a, b}_{r, s}( \overline{\mathbb{F}}_p)).                       \end{CD}
$$
where the  sum is over a set of cosets representatives of $\Gamma_{1,[\mathfrak{b}_j]}(\mathfrak{n})/\Gamma''^{, \alpha}_{ 1, [\mathfrak{b}_j]}( \mathfrak{n}).$ Hence one obtains that:
\begin{eqnarray*}
T_\alpha:{\rm  H}^0( \Gamma_{1, [\mathfrak{b}_i]}(\mathfrak{n}), V^{a, b}_{r, s}( \overline{\mathbb{F}}_p))    & \rightarrow & {\rm H}^0( \Gamma_{1, [\mathfrak{b}_j]}(\mathfrak{ n}), V^{a, b}_{r, s}( \overline{\mathbb{F}}_p)) \\
v  &\mapsto &  \sum_{\lambda \in \Gamma_{1,[\mathfrak{b}_j]}( \mathfrak{ n})/\Gamma''^{, \,\alpha}_{ 1, [\mathfrak{b}_j]}( \mathfrak{ n}) }(\lambda\alpha).v.
\end{eqnarray*}
It is worthwhile observing that the decomposition $\Gamma_{1,[\mathfrak{b}_j]}(\mathfrak{ n}) = \amalg_{r} \lambda_r \Gamma''^{, \alpha}_{1,[\mathfrak{b}_j]}(\mathfrak{n})$ is equivalent to the decomposition of the double cosets $ \Gamma_{1,[\mathfrak{b}_j]}(\mathfrak{ n})  \alpha \Gamma_{1,[\mathfrak{b}_i]}( \mathfrak{ n}) = \amalg_{r} \lambda_r \alpha\Gamma_{1,[\mathfrak{b}_i]}( \mathfrak{ n}).$
\subsubsection{Formula on degree one}
We now give the formula of $T_\alpha$ on degree one cohomology. To this end,  we need to recall the formulas describing the isomorphism $ conj_\alpha$ and the corestriction  in terms of non-homogeneous cocycles.
The  conjugation isomorphism is described by the formula \begin{eqnarray*} conj_\alpha: {\rm H}^1(\Gamma'^{, \alpha^{-1}}_{1, [ \mathfrak{b}_i]}(\mathfrak{ n}), V^{a, b}_{r, s}( \overline{\mathbb{F}}_p)) &\rightarrow & {\rm H}^1(\Gamma''^{, \alpha}_{1, [\mathfrak{b}_j]}(\mathfrak{n}), V^{a, b}_{r, s}( \overline{\mathbb{F}}_p))\\ c &\mapsto & (\omega \mapsto \alpha.c(\alpha^{-1} \omega \alpha)).
\end{eqnarray*}
For the corestriction homomorphism, let $\Gamma_{1, [\mathfrak{b}_j]}(\mathfrak{n}) = \amalg_n \gamma_n\Gamma''^{,  \alpha}_{1, [ \mathfrak{b}_j]}( \mathfrak{n}).$ For $ \omega \in  \Gamma_{1, [\mathfrak{b}_j]}( \mathfrak{n}),$ let $  s_n  $ be the unique index such that  $ \gamma^{-1}_n \omega \gamma_{s_n} \in \Gamma_{1, [\mathfrak{b}_j]}(\mathfrak{ n}).$
Then the corestriction homomorphism is given as 
\begin{eqnarray*} cor: {\rm H}^1(\Gamma''^{, \alpha}_{1, [ \mathfrak{b}_j]}(\mathfrak{ n}), V^{a, b}_{r, s}( \overline{\mathbb{F}}_p))  & \rightarrow & {\rm H}^1(\Gamma_{1, [\mathfrak{b}_j]}( \mathfrak{n}), V^{a, b}_{r, s}( \overline{\mathbb{F}}_p)) \\ c & \mapsto &  (\omega \mapsto \sum_n \gamma_n.c( \gamma^{-1}_n \omega \gamma_{s_n})).
\end{eqnarray*}\label{Indep}
The formula of $ T_\alpha$ on degree one cohomology is thus
\begin{eqnarray*}
T_\alpha:{\rm  H}^1( \Gamma_{1, [\mathfrak{b}_i]}(\mathfrak{n}), V^{a, b}_{r, s}( \overline{\mathbb{F}}_p))    & \rightarrow & {\rm H}^1( \Gamma_{1, [\mathfrak{b}_j]}(\mathfrak{ n}), V^{a, b}_{r, s}( \overline{\mathbb{F}}_p)) \\
c  &\mapsto &  (\omega \mapsto \sum_n\gamma_n\alpha.c((\gamma_n\alpha)^{-1} \omega \gamma_{ s_n}\alpha)).
\end{eqnarray*}
Indeed with the given formulas we have
\begin{eqnarray*}
(cor( conj_\alpha(c) ))( w)   &= & \sum_{\gamma_n \in \Gamma_{1, [\mathfrak{b}_j]}(\mathfrak{n})/ \Gamma'_{1, [\mathfrak{b}_j] } (\mathfrak{n})} \gamma_n. ( conj_\alpha(c)( \gamma^{-1}_n w \gamma_{s_n} ))  \\
& = &  \sum_{\gamma_n \in \Gamma_{1, [\mathfrak{b}_j]}(\mathfrak{n})/\Gamma'_{1, [\mathfrak{b}_j]} (\mathfrak{n})} \gamma_n \alpha. c( \alpha^{-1}\gamma^{-1}_n w \gamma_{s_n} \alpha).
\end{eqnarray*}
Let  $ \lambda_i$  be another set of representatives of  $ \Gamma_{1, [\mathfrak{b}_j]}(\mathfrak{n})/\Gamma''^{, \alpha}_{1, [\mathfrak{ b}_j]}(\mathfrak{n}),$ and $ \sigma_i \in \Gamma''^{, \alpha}_{1, [\mathfrak{b}_j]}$ such that $  \lambda_i = \gamma_i \sigma_i.$ With this we have
$$ (cor( c))( w) = \gamma_i \sigma. c( \sigma^{-1}_i \gamma^{-1}_i w \gamma_{j_i} \sigma_i).$$
Because taking conjugation by an element from $ \Gamma'^{, \alpha}_{1, [\mathfrak{b}_j]}(\mathfrak{n})$ gives cohomologous cocycle, we deduce that  the corestriction map does not depend on the choice of representatives of $ \Gamma_{1, [\mathfrak{b}_j]}(\mathfrak{n})/\Gamma''^{, \alpha}_{1, [\mathfrak{b}_j]}(\mathfrak{n}).$ This means that $T_\alpha$ does not depend on the choice of set of representatives and so only depends on the double coset $ \Gamma_{1, [\mathfrak{b}_j]} \alpha \Gamma_{1, [\mathfrak{b}_i]}  $ since we  know that
$$ \Gamma_{1, [\mathfrak{b}_j]}(\mathfrak{n}) = \amalg_n \gamma_n \Gamma''^{, \alpha}_{1, [\mathfrak{b}_j]}      \Longleftrightarrow  \Gamma_{1, [\mathfrak{b}_j]}(\mathfrak{n}) \alpha \Gamma_{1, [\mathfrak{b}_i]}(\mathfrak{n}) = \amalg_n \gamma_n \alpha \Gamma_{1, [\mathfrak{b}_i]}(\mathfrak{n}).$$
\subsubsection{Action of  $ T_g $ on  $  \oplus^h_{i = 1} {\rm H }^r (\Gamma_{1, [\mathfrak{b}_i]}( \mathfrak{ n}),  V^{a, b}_{r, s}( \overline{\mathbb{F}}_p))$}
Now that we have recalled the formulas of the Hecke operators on  group cohomology, let us say how Hecke operators act on the $\overline{\mathbb{F}}_p$-vector spaces $ \oplus^h_{i = 1} {\rm H }^r (\Gamma_{1, [\mathfrak{b}_i]}( \mathfrak{n}),  V^{a, b}_{r, s}(\overline{\mathbb{F}}_p)).$ Let  $g$ be as in  Lemma \ref{lem2_3_0} and  consider  $ \beta_i$ and $ j_i$ provided by the lemma loc. cit.  Let $T_{\beta_i}$ the Hecke operator corresponding to  the double coset $\Gamma_{1, [\mathfrak{b}_{j_i}]}(\mathfrak{n})\beta_i \Gamma_{1, [\mathfrak{b}_{i}]}( \mathfrak{n}).$  Then  $T_{\beta_i}$ sends an element from  ${\rm  H}^r(\Gamma_{1, [\mathfrak{b}_i]}( \mathfrak{n}), V^{a, b}_{r, s}( \overline{\mathbb{F}}_p))$ to   $ {\rm H}^r(\Gamma_{1, [\mathfrak{b}_{j_i}]}( \mathfrak{n}), V^{a, b}_{r, s}( \overline{\mathbb{F}}_p)).$ It was proved by Shimura, see \cite{Shimura},  that for  $(c_1, \cdots, c_h) \in \oplus^h_{i = 1}{\rm H }^r (\Gamma_{1, [\mathfrak{b}_i]}(\mathfrak{n}),  V^{a, b}_{r, s}( \overline{\mathbb{F}}_p)),$  the Hecke action of $ T_g$ is 
$$T_g.( c_1, \cdots, c_h) = ( d_1, \cdots.d_h),$$
where $ d_{ j_i}  =   T_{\beta_i}.c_i.$
\begin{remark}
In the idyllic situation where  the ideal  $(det( g))$ is principal, then, the  Hecke operator $ T_g$ does not permute the summands in  $   \oplus^h_{l = 1} {\rm  H}^r (\Gamma_{1, [\mathfrak{b}_l]}(\mathfrak{ n}),  V^{a, b}_{r, s}( \overline{\mathbb{F}}_p) ).$
Indeed $ ( det( g_{j_i} g g^{-1}_{i} )) = (det (g)),$ so $ j_i =  i $ in Lemma \ref{lem2_3_0}. Therefore  $ T_g.(c_1, \cdots, c_h) = ( d_1, \cdots, d_h)  $ where  $ d_i = T_{\beta_i}.c_i.$
\end{remark}
\begin{remark}
Let $ g$ be as in Lemma \ref{lem2_3_0}. Let  us denote the ideal $ (det(g))$ as $ \mathfrak{c}.$ 
Then  $T_g$ maps the $\overline{\mathbb{F}}_p$-vector spaces  $ \oplus^r_{l = 1} {\rm H}^r(\Gamma_{1, [\mathfrak{b}_l]}( \mathfrak{n}), V^{a, b}_{r, s}( \overline{\mathbb{F}}_p))$ to $  \oplus^r_{l = 1} {\rm H}^r(\Gamma_{1, [\mathfrak{c}^{-1}\mathfrak{b}_l]}( \mathfrak{ n}), V^{a, b}_{r, s}( \overline{\mathbb{F}}_p)).$ To see this,  one needs to just recall that $ T_g$ maps $$ \oplus^r_{i = 1} {\rm  H}^r(\Gamma_{1, [\mathfrak{b}_i]}(\mathfrak{n}), V^{a, b}_{r, s}( \overline{\mathbb{F}}_p)) \,\, \text{to}\,\,    \oplus^r_{ i = 1} {\rm  H}^r( \Gamma_{ 1, [\mathfrak{b}_{j_i}]}(\mathfrak{n}), V^{a, b}_{r, s}( \overline{\mathbb{F}}_p))$$ where $  j_i$ is such that $ (det( g_{j_i}g g^{-1}_{i}))$ is principal. In terms of ideals this means that  $ [\mathfrak{c}^{-1}\mathfrak{b}_i] = [\mathfrak{b}_{j_i}].$
\end{remark}\label{DiamondOp}
We shall next recall a definition of a class of degree one Hecke operators known as {\it diamond operators}\index{diamond operators}.
\subsubsection{Diamond operators}
This is a special kind (degree one Hecke operator) of Hecke operator defined as follows. Define  the open compact subgroup $ K_0( \mathfrak{n})\index{$K_0(\mathfrak{n})$}$ of $ G(\hat{\mathcal{O}})  $ as
$$K_0(\mathfrak{n}) = \big\{  \left(\begin{smallmatrix} a & b \\ c & d \end{smallmatrix}\right) \in \prod_{ \mathfrak{ q}\nmid \infty} G( \mathcal{O}_\mathfrak{q}): c   \in \mathfrak{n}\hat{\mathcal{ O}}\big\}. $$
Then  $ K_1(\mathfrak{n}) $ is a  normal subgroup of $ K_0(\mathfrak{n}).$ So for any $\alpha \in K_0(\mathfrak{n})$ we have
$$ \alpha K_1(\mathfrak{n})\alpha^{-1}   = K_1(\mathfrak{n}).$$
Therefore we deduce that $ K_1(\mathfrak{n})\alpha K_1(\mathfrak{n}) = \alpha K_1(\mathfrak{n}).$
The Hecke operator corresponding to the double coset $K_1(\mathfrak{n})\alpha K_1(\mathfrak{n})$ is called a {\it diamond  operator}.
\begin{example}
Take $\alpha \in K_0(\mathfrak{n})$ with determinant corresponding to a principal ideal  such  that  at one place $\mathfrak{q}$ dividing $\mathfrak{n}$  the component $ \alpha_\mathfrak{q} $ has reduction modulo $\mathfrak{n}$ a matrix of the form $ \left(\begin{smallmatrix} \omega & 0 \\ 0 & \gamma \end{smallmatrix}\right)
$ and at the other places the components are the identity matrix.
Because the determinant of $\alpha$ is principal, the Hecke operator $T_\alpha$ does not permute the components:
\begin{eqnarray*}
T_\alpha: \oplus^h_{i = 1}{\rm H}^1(\Gamma_{1, [\mathfrak{b}_i]}(\mathfrak{n}), V^{a, b}_{r, s}( \overline{\mathbb{F}}_p)) & \rightarrow& \oplus^h_{i = 1}{\rm H}^1(\Gamma_{1, [\mathfrak{b}_i]}(\mathfrak{n}), V^{a, b}_{r, s}( \overline{\mathbb{F}}_p)) \\
(c_1, \cdots, c_h) &\mapsto& (T_{\beta_1}.c_1, \cdots, T_{\beta_h}.c_h)
\end{eqnarray*}
where $ \beta_i \in \Gamma_{0, [\mathfrak{b}_i]}(\mathfrak{n}) := g_iK_0(\mathfrak{n})g^{-1}_i\cap  G(F) $ such that $ \alpha K_1(\mathfrak{n}) = g_i^{-1}\beta_i g_iK_1(\mathfrak{n})$ and $T_{\beta_i}$ is the Hecke operator corresponding to the double coset $\Gamma_{1, [\mathfrak{b}_i]}(\mathfrak{n})\beta_i\Gamma_{1, [\mathfrak{b}_i]}(\mathfrak{n}).$ Note that $T_{\beta_i}$ defines a non-adelic diamond operator. More explicitly
the Hecke operator  $T_{\beta_i}$ on  $ {\rm H}^1(\Gamma_{1, [\mathfrak{b}_i]}(\mathfrak{n}),  V^{a, b}_{r, s}( \overline{\mathbb{F}}_p))  $ is given as
\begin{eqnarray*}
T_{\beta_i} : {\rm H}^1(\Gamma_{1, [\mathfrak{b}_i]}(\mathfrak{n}),  V^{a, b}_{r, s}( \overline{\mathbb{F}}_p)) &\rightarrow& {\rm H}^1(\Gamma_{1, [\mathfrak{b}_i]}(\mathfrak{n}),  V^{a, b}_{r, s}( \overline{\mathbb{F}}_p)) \\
 c &\mapsto&  (w  \mapsto  \beta_i. c(\beta^{-1}_i w \beta_i)).
\end{eqnarray*}
\end{example}
Aside from this interesting fact, there is a nice interpretation of diamond operators as  in the classical setting. It arises from the isomorphism of  abelian groups
\begin{eqnarray*} K_0(\mathfrak{n})/K_1(\mathfrak{n})  & \rightarrow  &  (\hat{\mathcal{O}}/\mathfrak{n}\hat{\mathcal{O}})^* \cong  (\mathcal{O}/\mathfrak{n})^* \\
\left(\begin{smallmatrix} a  & b \\ c & d
\end{smallmatrix}\right)  & \mapsto &  d \pmod{\mathfrak{n}}.
\end{eqnarray*}
This means that we have an action of the group $(\mathcal{O}/\mathfrak{n})^*$ on  $ \oplus^h_{i = 1}{\rm H}^1( \Gamma_{1, [\mathfrak{b}_i]}(\mathfrak{n}), V^{a, b}_{r, s}( \overline{\mathbb{F}}_p)).$ Let  $ \chi: (\mathcal{O}/\mathfrak{n})^* \rightarrow \overline{\mathbb{F}}^*_p$ be a character. As  a representation of the abelian group  $ (\mathcal{O}/\mathfrak{n})^*, $ then when $ p \nmid \sharp(\mathcal{O}/\mathfrak{n})^*,$ the space $ \oplus^h_{   i = 1}{\rm H}^1( \Gamma_{1, [\mathfrak{b}_i]}(\mathfrak{n}), V^{a, b}_{r, s}( \overline{\mathbb{F}}_p)$  decomposes as a direct sum  of $  \chi$-eigenspaces. So  by  denoting the spaces $ \oplus^h_{i = 1}{\rm  H}^1(\Gamma_{1, [\mathfrak{b}_i]}(\mathfrak{ n}), V^{a, b}_{r, s}( \overline{\mathbb{F}}_p)$  as  $ \mathcal{M}_{V^{a, b}_{r, s}( \overline{\mathbb{F}}_p)}(\mathfrak{n})$  and a $\chi$-eigenspace as $ \mathcal{M}_{V^{a, b}_{r, s}( \overline{\mathbb{F}}_p)}(\mathfrak{n}, \chi),$  then we have $$ \mathcal{M}_{V^{a, b}_{r, s}( \overline{\mathbb{F}}_p)}( \mathfrak{n}) = \oplus_{\chi}\mathcal{M}_{V^{a, b}_{r, s}( \overline{\mathbb{F}}_p)}(\mathfrak{n}, \chi).$$
Let us turn next to the definition  of the Hecke algebra.
\subsubsection{Hecke algebra}
We start by defining first what we call mod $p$ cohomological modular forms over $ F.$ Recall that we have denoted  $ p\mathcal{ O}$ as $ \mathfrak{p}$ and we are assuming that $p$ is inert in $ F.$ The residue field is then $ \mathbb{F}_{p^2}.$ 
The congruence subgroups $  \Gamma_{1, [\mathfrak{b}_i]}(\mathfrak{ n}) $ act on $ V^{ a, b}_{r, s}(  \overline{ \mathbb{F}}_p)$ via reduction modulo $p.$
\begin{definition}\label{def2-0}
A {\it cohomological mod $ p$ modular form}\index{cohomological mod $ p$ modular form} of weight  $ V^{a, b}_{r, s}( \overline{\mathbb{F}}_p)$ and level $\mathfrak{n}$ over $ F$ is a class in \[\oplus^h_{i = 1}{\rm H}^1(\Gamma_{1, [\mathfrak{b}_i]}(\mathfrak{ n}), V^{a, b}_{r, s}( \overline{\mathbb{F}}_p)).\]
We have denoted this $\overline{\mathbb{F}}_p$-vector spaces  as $\mathcal{M}_{V^{a, b}_{r, s}( \overline{\mathbb{F}}_p)}( \mathfrak{n})\index{$\mathcal{M}_{V^{a, b}_{r, s}( \overline{\mathbb{F}}_p)}( \mathfrak{n})$,\,\text{mod $ p$ modular forms}}.$
\end{definition}
We next define the  Hecke algebra of interest for our purposes.
\begin{definition}[Hecke algebra]
\begin{enumerate}
\item The abstract Hecke algebra  $\mathcal{H}\index{$\mathcal{H}, \,\text{abstract Hecke algebra}$}$ is the polynomial algebra $\mathbb{Z}[T_\mathfrak{q},  S_\mathfrak{ q}|\,\,\mathfrak{q} \nmid \mathfrak{pn} \, \textrm{maximal ideal} \, \subset \mathcal{O}].$
\item The Hecke algebra  $ \mathcal{H}(\mathcal{M}_{V^{a, b}_{r, s}( \overline{\mathbb{F}}_p)}( \mathfrak{n}))\index{$\mathcal{H}(\mathcal{M}_{V^{a, b}_{r, s}( \overline{\mathbb{F}}_p)}( \mathfrak{n})),\,\text{Hecke algebra}$}$ acting on  $ \mathcal{M}_{V^{a, b}_{r, s}( \overline{\mathbb{F}}_p)}( \mathfrak{n})$ is the homomorphic image of: $\mathcal{H} \rightarrow  End_{\overline{\mathbb{F}}_p} (\mathcal{M}_{V^{a, b}_{r, s}( \overline{\mathbb{F}}_p)}( \mathfrak{n})));   T_\mathfrak{ q},  S_\mathfrak{ q} \mapsto  T_\mathfrak{q}, S_\mathfrak{ q}.$
\end{enumerate}
\end{definition}
As we said an eigenform for all the Hecke operator $ T_\mathfrak{ q}$ for $ \mathfrak{ q} $ away from $ \mathfrak{ pn}$ gives rise to a system of Hecke eigenvalues. Here is a formal definition of a system of Hecke eigenvalues with values in  $ \overline{\mathbb{F}}_p.$
\begin{definition}
A system of Hecke eigenvalues\index{Hecke! eigenvalues} with values in $\overline{\mathbb{F}}_p$ is a ring homomorphism
$ \psi: \mathcal{H} \rightarrow \overline{\mathbb{ F}}_p.$
We say that it occurs in  $\mathcal{M}_{V^{a, b}_{r, s}( \overline{\mathbb{F}}_p)}( \mathfrak{n}) $ if there is a non-zero  $ f \in  \mathcal{M}_{V^{a, b}_{r, s}( \overline{\mathbb{F}}_p)}( \mathfrak{n})$ such that $ Tf = \psi( T)f$ for all $ T\in \mathcal{H}.$
\end{definition}
In the next section we shall relate the induced modules $ {\rm Ind}^{\Gamma_{1,[\mathfrak{b}_i]}(\mathfrak{n})}_{\Gamma_{1, [\mathfrak{b}_i]}(\mathfrak{pn})}( \overline{\mathbb{F}}_p) $ to some irreducible $\overline{\mathbb{F}}_p[{\rm GL}_2( \mathcal{O})]$-modules of the form $ V^{a, b}_{r, s}(\overline{\mathbb{F}}_p).$
\section{The relevant induced modules}\label{sec2_3}
We recall that by assumption  we have fixed a rational inert prime $ p$  and $\mathfrak{p} = p\mathcal{O}$ does not divide an integral ideal $\mathfrak{n}$ which was also fixed.
Here we will be concerned with the  induced modules\index{induced modules}  $  {\rm Ind}^{\Gamma_{1,[\mathfrak{b}_i]}(\mathfrak{n})}_{\Gamma_{1, [\mathfrak{b}_i]}(\mathfrak{pn})}( \overline{\mathbb{F}}_p)\index{${\rm Ind}^{\Gamma_{1,[\mathfrak{b}_i]}(\mathfrak{n})}_{\Gamma_{1, [\mathfrak{b}_i]}(\mathfrak{pn})}( \overline{\mathbb{F}}_p)$,\,\text{induced module}}.$ We shall derive a more explicit decomposition of the latter. Let $ \tilde{G} = {\rm  GL}_2( \mathbb{F}_{p^2}) $ and $ \tilde{S} =  {\rm  SL}_2( \mathbb{F}_{p^2}).$

Define the following congruence subgroups of $ {\rm GL}_2(F):$
\[ \Gamma^1_{1, [\mathfrak{b}_i]}( \mathfrak{n}  )\index{$\Gamma^1_{1, [\mathfrak{b}_i]}( \mathfrak{n}$)} := g_i K_1( \mathfrak{n}) g^{-1}_i\cap {\rm SL}_2(F).\]
Because $ \left(\begin{smallmatrix} t_i & 0 \\ 0 & 1 \end{smallmatrix}\right)\left(\begin{smallmatrix} a & b \\ c & d \end{smallmatrix}\right)\left(\begin{smallmatrix} t^{-1}_i & 0 \\ 0 & 1 \end{smallmatrix}\right) = \left(\begin{smallmatrix} a & t_ib \\ t^{-1}_ic & d \end{smallmatrix}\right), $ one obtains that
$$ \Gamma^1_{1, [\mathfrak{b}_i]}(\mathfrak{n}) = \left\{\left(\begin{smallmatrix} a & b \\ c & d \end{smallmatrix}\right)\in {\rm SL}_2(F)  : a -1, d-1 \in \mathfrak{n}; b \in \mathfrak{b}_i,  c \in \mathfrak{b}^{-1}_i\mathfrak{n}  \right\}.$$
In particular with our assumptions one has that
$$ \Gamma^1_{1, [\mathfrak{b}_1]}(\mathfrak{n}) = \Gamma_1( \mathfrak{ n}) := \left\{\left(\begin{smallmatrix} a  & b \\ c  & d \end{smallmatrix}\right) \in {\rm SL}_2(\mathcal{O}): a-1,\, d-1,\,   c \equiv 0 \pmod{\mathfrak{n}}  \right\}.$$
Furthermore, let
\[\Gamma( \mathfrak{n}) = \left\{\left(\begin{smallmatrix} a  & b \\ c  & d \end{smallmatrix}\right) \in {\rm SL }_2(\mathcal{O}): a-1,\, d-1,\,  b,\,  c \equiv 0 \pmod{\mathfrak{n}}  \right\}.\]
\begin{lemma}\label{lem1} Let $ \tilde{S} =  {\rm  SL}_2( \mathbb{F}_{p^2}).$ Then, we have  an exact sequence \[  1\rightarrow\Gamma( \mathfrak{ p})\cap \Gamma_1(\mathfrak{n}) \rightarrow \Gamma_1 ( \mathfrak{ n})\rightarrow \tilde{S} \rightarrow 1\] where the  third arrow  is reduction modulo $ p.$
\end{lemma}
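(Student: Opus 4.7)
The exact sequence has three parts to verify: injectivity of the inclusion (trivial), the kernel description, and surjectivity of the reduction. I would structure the argument as follows.

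First, I would set up the map $\pi : \Gamma_1(\mathfrak{n}) \to \mathrm{SL}_2(\mathbb{F}_{p^2})$ given by entrywise reduction modulo $\mathfrak{p}$; since $\mathcal{O}/\mathfrak{p} \cong \mathbb{F}_{p^2}$ (as $p$ is inert) and a matrix in $\mathrm{SL}_2(\mathcal{O})$ reduces to an element of $\mathrm{SL}_2(\mathbb{F}_{p^2})$, this map is well-defined. Identifying the kernel is immediate from the definitions: a matrix $\left(\begin{smallmatrix} a & b \\ c & d \end{smallmatrix}\right) \in \Gamma_1(\mathfrak{n})$ lies in $\ker \pi$ iff $a-1, b, c, d-1 \in \mathfrak{p}$, which combined with the congruences defining $\Gamma_1(\mathfrak{n})$ gives exactly $\Gamma(\mathfrak{p}) \cap \Gamma_1(\mathfrak{n})$. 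The inclusion in the other direction is obvious.

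The substantive point is surjectivity. The plan is to combine the classical surjectivity of $\mathrm{SL}_2(\mathcal{O}) \twoheadrightarrow \mathrm{SL}_2(\mathcal{O}/I)$ for any nonzero ideal $I$ (a standard consequence of strong approximation for $\mathrm{SL}_2$, or equivalently, of the fact that $\mathrm{SL}_2(\mathcal{O}/I)$ is generated by elementary unipotent matrices which visibly lift) with the Chinese Remainder Theorem. Explicitly, take $I = \mathfrak{pn}$; since $\mathfrak{p}$ and $\mathfrak{n}$ are coprime, CRT yields
\[ \mathrm{SL}_2(\mathcal{O}/\mathfrak{pn}) \;\cong\; \mathrm{SL}_2(\mathcal{O}/\mathfrak{p}) \times \mathrm{SL}_2(\mathcal{O}/\mathfrak{n}). \]
Given $M \in \mathrm{SL}_2(\mathbb{F}_{p^2}) = \mathrm{SL}_2(\mathcal{O}/\mathfrak{p})$, consider the pair $(M, I) \in \mathrm{SL}_2(\mathcal{O}/\mathfrak{p}) \times \mathrm{SL}_2(\mathcal{O}/\mathfrak{n})$ and choose a preimage $\tilde M \in \mathrm{SL}_2(\mathcal{O})$ via the composed surjection $\mathrm{SL}_2(\mathcal{O}) \twoheadrightarrow \mathrm{SL}_2(\mathcal{O}/\mathfrak{pn})$. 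Then $\tilde M \equiv M \pmod{\mathfrak{p}}$ and $\tilde M \equiv I_2 \pmod{\mathfrak{n}}$, so $\tilde M \in \Gamma_1(\mathfrak{n})$ (in fact in $\Gamma(\mathfrak{n})$) and $\pi(\tilde M) = M$.

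The only non-formal ingredient is the surjectivity of $\mathrm{SL}_2(\mathcal{O}) \twoheadrightarrow \mathrm{SL}_2(\mathcal{O}/\mathfrak{pn})$, which I would simply cite (or prove in one line by writing an arbitrary element of $\mathrm{SL}_2(\mathcal{O}/\mathfrak{pn})$ as a product of elementary matrices $\left(\begin{smallmatrix} 1 & x \\ 0 & 1 \end{smallmatrix}\right), \left(\begin{smallmatrix} 1 & 0 \\ y & 1 \end{smallmatrix}\right)$, each of which admits an obvious lift to $\mathrm{SL}_2(\mathcal{O})$). I do not anticipate any real obstacle: the kernel is automatic, and surjectivity reduces, via coprimality of $\mathfrak{p}$ and $\mathfrak{n}$, to the standard surjectivity of reduction on $\mathrm{SL}_2$.
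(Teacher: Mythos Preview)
Your proof is correct, and it takes a genuinely different route from the paper's. The paper gives a hands-on constructive argument for surjectivity: given a matrix in $\tilde S$, it first reduces to the case where the lower-left entry $c$ is a nonzero element of $\mathfrak{n}$ coprime to $\mathfrak{p}$ (by left- and right-multiplying by explicit auxiliary matrices built from a relation $nr - ks = 1$ with $n \in \mathfrak{n}$, $k \in \mathfrak{p}$), and then solves the required congruences for $\alpha, \beta, \gamma, \delta$ directly via the Chinese Remainder Theorem. Your approach instead invokes the global fact $\mathrm{SL}_2(\mathcal{O}) \twoheadrightarrow \mathrm{SL}_2(\mathcal{O}/\mathfrak{pn})$ (from strong approximation, or from $\mathrm{SL}_2$ of a finite, hence semi-local, ring being generated by elementary matrices) and then splits the target via CRT to hit $(M, I_2)$. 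Your route is cleaner and more conceptual, and it immediately generalises; the paper's is entirely self-contained and avoids citing any outside input. One small caveat: the ``one line'' justification via elementary matrices tacitly uses that $\mathrm{SL}_2$ of a semi-local ring equals its elementary subgroup, which is standard but not quite trivial---if you want the argument to be fully elementary, you should at least indicate why every element of $\mathrm{SL}_2(\mathcal{O}/\mathfrak{pn})$ is a product of elementary unipotents.
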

\begin{proof} 
It is clear that $ \Gamma(\mathfrak{ p})\cap \Gamma_1(\mathfrak{n})$ is the kernel of the reduction modulo $ \mathfrak{ p}$ of $ \Gamma_1(\mathfrak{ n}).$ So, we are left to see the surjectivity of the third  arrow. To this end let  $a, b ,  c , d \in \mathcal{ O}$ with $ ad - bc \equiv 1 \pmod{ \mathfrak{ p}}.$ We need to find $ \alpha, \beta, \gamma, \delta \in \mathcal{ O}$ such that $ \alpha\delta - \beta\gamma = 1$ with the congruences: \begin{eqnarray*}
\alpha  & \equiv & a \pmod{ \mathfrak{ p}} \\                                                                          \alpha & \equiv &  1 \pmod{ \mathfrak{ n}} \\ 
\beta   & \equiv & b \pmod {  \mathfrak{ p}  } \\
\gamma & \equiv & c \pmod {  \mathfrak{ p}  } \\
\gamma  & \equiv & 0  \pmod{ \mathfrak{ n}} \\
\delta & \equiv & d  \pmod {  \mathfrak{ p}  } \\
\delta &\equiv  & 1 \pmod {  \mathfrak{ n} }.
\end{eqnarray*}
It is readily seen that if   $  0\neq c  \in  \mathfrak{ n}$ and is coprime with $ \mathfrak{ p}$  then the Chinese Remainder Theorem  permits to conclude. Indeed,  set $ \gamma = c,$  there exist $ \alpha, \delta \in \mathcal{ O}$ with $ \alpha \equiv a \pmod{  \mathfrak p},  $  $ \alpha \equiv  1 \pmod{  \gamma},$ $ \delta \equiv d \pmod{ \mathfrak{ p}}, $ $ \delta \equiv 1 \pmod{ \gamma}.$ This gives $  \alpha\delta \equiv  1 \pmod{ \gamma},$ and so there exists $ \beta \in \mathcal{O} $ such that $ \alpha\delta - \beta\gamma = 1$ and $ \beta \equiv b \pmod{ \mathfrak{ p}}.$ So we need to see that we can always reduce to this case. To this end as  $ \mathfrak{n}  $ is coprime with $ \mathfrak{ p},$ we can find $  n \in \mathfrak{ n}, \, k \in \mathfrak{ p }, $ and $  r,  s \in \mathcal{ O}$ such that $ nr -  ks  = 1.$ The image of the matrix $ \left( \begin{smallmatrix} r&0\\ n& n \end{smallmatrix}\right)$ belongs to $ \tilde{ S}$ and can be lifted by the previous arguments. Then $   \left( \begin{smallmatrix} r&0\\ n&n \end{smallmatrix}\right)\left( \begin{smallmatrix} a&b\\ c&d \end{smallmatrix}\right)  =  \left( \begin{smallmatrix}  ra& rb\\ n(c + a )& n(d + b) \end{smallmatrix}\right)$ is a matrix in   $ \tilde{ S}$ whose bottom line has entries in $ \mathfrak{ n}.$  Then if  $n(c + a )\neq 0 $  we are done, otherwise we just have to multiply from the right by   $\left( \begin{smallmatrix} 1&0\\ n&1 \end{smallmatrix}\right)$ for the condition to hold.
\end{proof}
\begin{corollary}\label{cor1}
For $  i \geq 1,$  the congruence subgroup $ \Gamma^1_{1, [\mathfrak{b}_i]}(\mathfrak{n})$ surjects onto $ \tilde{S}$ via reduction modulo $p.$
\end{corollary}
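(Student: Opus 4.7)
The plan is to lift a generating set of $\tilde{S}=\mathrm{SL}_2(\mathbb{F}_{p^2})$ explicitly. Over any field, $\mathrm{SL}_2$ is generated by the elementary unipotent matrices $\left(\begin{smallmatrix} 1 & \bar{x} \\ 0 & 1 \end{smallmatrix}\right)$ and $\left(\begin{smallmatrix} 1 & 0 \\ \bar{y} & 1 \end{smallmatrix}\right)$, as one sees from the factorization $\left(\begin{smallmatrix} a & b \\ c & d \end{smallmatrix}\right)=\left(\begin{smallmatrix} 1 & (a-1)/c \\ 0 & 1 \end{smallmatrix}\right)\left(\begin{smallmatrix} 1 & 0 \\ c & 1 \end{smallmatrix}\right)\left(\begin{smallmatrix} 1 & (d-1)/c \\ 0 & 1 \end{smallmatrix}\right)$ when $c\neq 0$, plus a preliminary left multiplication by $\left(\begin{smallmatrix} 1 & 0 \\ 1 & 1 \end{smallmatrix}\right)$ to reduce the case $c=0$ to the case $c\neq 0$. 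It therefore suffices to lift each such elementary unipotent to $\Gamma^1_{1,[\mathfrak{b}_i]}(\mathfrak{n})$.

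The key input is that the ideals $\mathfrak{b}_i$ and $\mathfrak{b}_i^{-1}\mathfrak{n}$ each surject onto $\mathcal{O}/\mathfrak{p}\cong\mathbb{F}_{p^2}$ under reduction modulo $\mathfrak{p}$. For $\mathfrak{b}_i$: our chosen representatives were taken coprime with $\mathfrak{pn}$, so $\mathfrak{b}_i+\mathfrak{p}=\mathcal{O}$; hence there is $u\in\mathfrak{b}_i$ with $u\equiv 1\pmod{\mathfrak{p}}$, and for any lift $x\in\mathcal{O}$ of $\bar{x}$ the product $ux$ lies in $\mathfrak{b}_i$ and reduces to $\bar{x}$. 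For the fractional ideal $\mathfrak{b}_i^{-1}\mathfrak{n}$, its $\mathfrak{p}$-adic valuation is $0$ since both $\mathfrak{b}_i$ and $\mathfrak{n}$ are coprime to $\mathfrak{p}$, so $\mathfrak{b}_i^{-1}\mathfrak{n}\subset\mathcal{O}_{\mathfrak{p}}$ and $\mathfrak{b}_i^{-1}\mathfrak{n}+\mathfrak{p}\mathcal{O}_{\mathfrak{p}}=\mathcal{O}_{\mathfrak{p}}$ (otherwise the sum would sit inside the maximal ideal, forcing $\mathfrak{b}_i^{-1}\mathfrak{n}\subset\mathfrak{p}\mathcal{O}_{\mathfrak{p}}$, contradicting the valuation). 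The same multiplication trick then produces, for each $\bar{y}\in\mathbb{F}_{p^2}$, an element $\gamma\in\mathfrak{b}_i^{-1}\mathfrak{n}$ reducing to $\bar{y}$.

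Given such $\beta\in\mathfrak{b}_i$ and $\gamma\in\mathfrak{b}_i^{-1}\mathfrak{n}$, the matrices $\left(\begin{smallmatrix} 1 & \beta \\ 0 & 1 \end{smallmatrix}\right)$ and $\left(\begin{smallmatrix} 1 & 0 \\ \gamma & 1 \end{smallmatrix}\right)$ tautologically lie in $\mathrm{SL}_2(F)$ and satisfy every defining condition of $\Gamma^1_{1,[\mathfrak{b}_i]}(\mathfrak{n})$: the diagonal entries are exactly $1$, hence trivially lie in $1+\mathfrak{n}$, and zero lies in every fractional ideal. Their reductions modulo $\mathfrak{p}$ are the desired elementary unipotent generators of $\tilde{S}$, so the image of $\Gamma^1_{1,[\mathfrak{b}_i]}(\mathfrak{n})\to\tilde{S}$ is a subgroup containing a generating set, and therefore equals $\tilde{S}$. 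The only real content is the surjectivity of the two ideal reductions, which is built in to the choice of representatives; this approach bypasses the Chinese Remainder manipulations of Lemma \ref{lem1}, though alternatively one could mimic that lemma by first picking a nonzero $\gamma\in\mathfrak{b}_i^{-1}\mathfrak{n}$ with $(\gamma)$ coprime to $\mathfrak{p}$ and applying CRT.
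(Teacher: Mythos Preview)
Your proof is correct and takes a genuinely different route from the paper's. The paper deduces the corollary from Lemma~\ref{lem1}: it first lifts an arbitrary $M\in\tilde S$ to $\left(\begin{smallmatrix}\alpha&\beta\\\gamma&\delta\end{smallmatrix}\right)\in\Gamma_1(\mathfrak n)$ using the CRT argument there, then picks $\lambda_i\in\mathfrak b_i$ with $\lambda_i\equiv 1\pmod{\mathfrak p}$ and observes that the twisted matrix $\left(\begin{smallmatrix}\alpha&\lambda_i\beta\\\lambda_i^{-1}\gamma&\delta\end{smallmatrix}\right)$ lands in $\Gamma^1_{1,[\mathfrak b_i]}(\mathfrak n)$ with the same reduction. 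You instead lift only the elementary unipotent generators of $\tilde S$, which sidesteps Lemma~\ref{lem1} entirely: because your lifts are unipotent by construction, the determinant condition is automatic and no CRT is needed. The paper's version is efficient given that Lemma~\ref{lem1} is already in hand and the diagonal twist is a one-line trick; your version is self-contained and treats all $i$ uniformly without reducing to the principal case. One small stylistic point: your surjectivity argument for $\mathfrak b_i^{-1}\mathfrak n\to\mathcal O/\mathfrak p$ mixes the global fractional ideal with the local ring $\mathcal O_{\mathfrak p}$ somewhat loosely; the cleanest phrasing is that the image of $\mathfrak b_i^{-1}\mathfrak n$ in $\mathcal O/\mathfrak p$ is a nonzero $\mathcal O$-submodule of a simple $\mathcal O$-module, hence everything---but the underlying idea is fine.
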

\begin{proof}
From Lemma \ref{lem1}, we have that $ \Gamma_1(\mathfrak{n})$ surjects onto $ \tilde{S}.$ So let  $ M = \left(\begin{smallmatrix} a & b \\ c & d \end{smallmatrix}\right) \in \tilde{S} 
$  and $  \left(\begin{smallmatrix} \alpha & \beta \\ \gamma & \delta \end{smallmatrix}\right) \in \Gamma_1(\mathfrak{n})$ be a lift of  $ M.$ For each  $ i > 1$ take $ \lambda_i \in \mathfrak{b}_i$ such that $\lambda_i \equiv  1 \pmod{\mathfrak{p}}$ ( this is possible since $ \mathfrak{b}_i$ is coprime with $ \mathfrak{p}$). Then the matrix $ \left(\begin{smallmatrix} \alpha & \lambda_i\beta \\ \lambda^{-1}_i\gamma & \delta \end{smallmatrix}\right)$ belongs to $ \Gamma^1_{1, [\mathfrak{b}_i]}(\mathfrak{n})$ and it has reduction $M.$
\end{proof}
From the fact that $  \Gamma^1_{1, [ \mathfrak{b}_i ]}(\mathfrak{n}) \subset \Gamma_{1,[ \mathfrak{b}_i] }(\mathfrak{n}),$ we deduce that the reduction modulo $ p$ of $  \Gamma_{1, [\mathfrak{b}_i]}(\mathfrak{n}) $ contains $ \tilde{S}.$ Now suppose we are given two subgroups $ H_1, H_2$ of $  \tilde{G} = {\rm GL}_2( \mathbb{F}_{p^2} )   $ containing $ \tilde{S}$ and such that their images by the determinant map are the same: $det(H_1) = det(H_2) < \mathbb{F}^*_{p^2}.$ The fact  $ det( H_1\cap H_2) = det(H_1)\cap det(H_2)$ implies the following commutative diagram with exact rows:
$$\begin{CD}
 1 @>>>\tilde{S} @>>> H_1\cap H_2 @> det>>  det(H_1) @>>> 1   \\
@.   @VVV @VVV @VVV \\
1 @>>>\tilde{S} @>>> H_1 @>det>>  det(H_1) @>>> 1. 
\end{CD}
$$  
Therefore one has $ H_1 = H_2, $ and we have established that any subgroup  $H$ of $  \tilde{G} $ containing $ \tilde{S}$ is uniquely determined by the image of the determinant map $  H \xrightarrow{det}  \mathbb{F}^*_{p^2}.$ From this fact we derive that  $  \Gamma_{1, [\mathfrak{b}_i]}( \mathfrak{n})$ reduces to  $$ 
\mathcal{T}_1(\mathfrak{n}):= \left\{ g \in \tilde{G}: det(g) \in Im( \mathcal{O}^*\xrightarrow{reduction} \mathbb{F}^*_{p^2}) \right\}.$$
We also derive that $ \Gamma_{1, [\mathfrak{b}_i]}( \mathfrak{pn})$ reduces to 
$$\mathcal{T}_1(\mathfrak{pn}):= \left\{ \left(\begin{smallmatrix} a &  b \\ 0  & 1                                            \end{smallmatrix}\right)\in \tilde{G}: a \in Im( \mathcal{O}^*\xrightarrow{reduction} \mathbb{F}^*_{p^2}) \right\}. $$
In summary, reduction mod  $ p$ gives the following bijection:
$$  \Gamma_{1, [\mathfrak{b}_i]}(\mathfrak{pn })\backslash \Gamma_{1, [\mathfrak{b}_i]}(\mathfrak{n})\rightarrow \mathcal{T}_1(\mathfrak{pn})\backslash \mathcal{T}_1(\mathfrak{n}).$$
Define  $  \tilde{U}\index{$\tilde{U}$} =\left\{ \left(\begin{smallmatrix} a  & b \\ 0  &  1 \end{smallmatrix}\right) \in \tilde{G} \right\}.$ Next we have the following bijection
\begin{eqnarray*}
\mathcal{T}_1(\mathfrak{pn})\backslash \mathcal{T}_1(\mathfrak{n})  & \longleftrightarrow &  \tilde{U}\backslash \tilde{G} \\
\mathcal{T}_1(\mathfrak{pn}) g  & \mapsto  & \tilde{U} g.
\end{eqnarray*}
Indeed the map is surjective and two elements from $\mathcal{T}_1(\mathfrak{ n })$ are sent to the same class modulo $ \tilde{U}$ if and only they belong to the same class modulo $\mathcal{T}_1(\mathfrak{ pn }) $ because we have $ \tilde{U}\cap \mathcal{T}_1(\mathfrak{ n }) = \mathcal{T}_1(\mathfrak{ pn }).$
Composing these two bijections, we obtain the bijection
$$ \Gamma_{1, [\mathfrak{b}_i]}(\mathfrak{pn })\backslash \Gamma_{1, [\mathfrak{b}_i]}(\mathfrak{n})\longleftrightarrow \tilde{U}\backslash \tilde{G}.$$
\subsection{Induced modules}
Let  $H$ be a group and $ J < H $ a subgroup of finite index. For a  left $ J$-module $ M$ the induced module,  and a twisted  induced module are defined as follows.
\begin{definition}
\begin{enumerate}
\item $ {\rm Ind}^H_J( M) = \{f : H \rightarrow M :  f( gh) = gf( h) \,\,\forall \,\, g\in J, h \in H  \}.$ 
\item  Given a character $  \chi : J \rightarrow  \overline{\mathbb{F}}^*_p,$ we define a twisted induced  module\index{twisted induced  module} as
$$ {\rm Ind }^H_J (\overline{\mathbb{F}}^\chi_p) = \{f : H \rightarrow \overline{\mathbb{F}}_p :  f( gh) = \chi(g)f( h) \,\,\forall \,\, g\in J, h \in H    \}.$$
\end{enumerate}
\end{definition}
Recall how a left action of $H$  on $ {\rm Ind}^H_J(M)$ can be defined: for    $ g \in  H $ and  $ f \in {\rm Ind}^H_J(M)$ we have  $(g.f)( h) := f(hg).$

Let $\tilde{B}\index{$\tilde{B},$\,\text{Borel subgroup}} =  \left\{\left(\begin{smallmatrix} a   & b \\ 0    &  e \end{smallmatrix} \right)\in \tilde{ G}\right\}   $ be the Borel subgroup of $ \tilde{G} $ and define the character $ \chi$ of  $ \tilde{B}$ by  
\begin{eqnarray*}  \chi: \tilde{B} &\rightarrow & \mathbb{F}^*_{p^2} \\
\left(\begin{smallmatrix} a   & b \\ 0    &  e \end{smallmatrix} \right)&\mapsto&    e.
\end{eqnarray*}
For an integer $ d,$ we also set $  \chi^d( . ) =   (\chi(.))^d.$ The  homomorphism  $\chi$ induces  a group isomorphism $$   \tilde{U}\backslash \tilde{B} \cong  \mathbb{F}^*_{p^2}.$$ From this isomorphism we obtain the following isomorphism of $ \tilde{B}$-modules $$  {\rm Ind}^{\tilde{B}}_{\tilde{U}}(\overline{\mathbb{F}}_p) \cong   {\rm Ind}^{\mathbb{F}^*_{p^2}}_{\{1\}}(\overline{\mathbb{F}}_p).$$ The isomorphism is defined as follows:
\begin{eqnarray*}
\Phi: {\rm Ind}^{\mathbb{F}^*_{p^2}}_{\{1\}}(\overline{\mathbb{F}}_p) & \rightarrow &   {\rm Ind}^{\tilde{B}}_{\tilde{U}}(\overline{\mathbb{F}}_p) \\
f    & \mapsto &\big( \left(\begin{smallmatrix} a & b \\ 0  & e 
\end{smallmatrix}\right) \mapsto f(e)\big).
\end{eqnarray*}
The representation $ {\rm Ind}^{\mathbb{F}^*_{p^2}}_{\{1\}}(\overline{\mathbb{F}}_p)$ is the regular representation of $ \mathbb{F}^*_{p^2}.$ This is  a  $ ( p^2 -1)$-dimensional representation of an abelian group of order prime to $ p$ and hence it admits a decomposition into a direct sum of one-dimensional representations of $ \mathbb{F}^*_{p^2}.$ By  a slight abuse of notation, the summands are the  $ \mathbb{F}^*_{p^2}$-modules $  \overline{\mathbb{F}}^{\chi^d}_p,$ where for $  x\in  \mathbb{ F}^*_{p^2},  y \in \overline{\mathbb{F}}_p,$ we have  $ x.y := x^dy$ with $ 0\leq d \leq p^2 -2.$ 
\begin{proposition}\label{Prop2-2}
For all  $i,$ there is the following isomorphism of left $\Gamma_{1, [\mathfrak{b}_i]}(\mathfrak{n})$-modules and left $\Gamma^1_{1, [\mathfrak{b}_i]}(\mathfrak{n})$-modules respectively:
\begin{enumerate}
\item ${\rm Ind}^{\Gamma_{1, [\mathfrak{b}_i]}(\mathfrak{n})}_{\Gamma_{ 1, [\mathfrak{b}_i]}(\mathfrak{ pn})}(\overline{\mathbb{F}}_p) \cong \oplus^{ p^2 -2}_{d = 0}{\rm  Ind}^{\tilde{G}}_{\tilde{B}}(\overline{\mathbb{F}}^{\chi^d}_p)$
\item $ {\rm Ind}^{\Gamma^1_{1, [\mathfrak{b}_i]}(\mathfrak{n})}_{\Gamma^1_{ 1, [\mathfrak{b}_i]}(\mathfrak{ pn})}(\overline{\mathbb{F}}_p) \cong \oplus^{ p^2 -2}_{d = 0}{\rm  Ind}^{\tilde{S}}_{\tilde{B}\cap \tilde{S}}(\overline{\mathbb{F}}^{\chi^d}_p).$
\end{enumerate}
\end{proposition}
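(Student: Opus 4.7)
The overall strategy is to combine the coset-space bijection established earlier in this section with transitivity of induction and the decomposition of the regular representation of $\mathbb{F}_{p^2}^*$ just recalled in the excerpt. For part (1), I would first note that as an $\overline{\mathbb{F}}_p$-vector space the induced module ${\rm Ind}^{\Gamma_{1,[\mathfrak{b}_i]}(\mathfrak{n})}_{\Gamma_{1,[\mathfrak{b}_i]}(\mathfrak{pn})}(\overline{\mathbb{F}}_p)$ is canonically the space of $\overline{\mathbb{F}}_p$-valued functions on $\Gamma_{1,[\mathfrak{b}_i]}(\mathfrak{pn})\backslash\Gamma_{1,[\mathfrak{b}_i]}(\mathfrak{n})$, with $\Gamma_{1,[\mathfrak{b}_i]}(\mathfrak{n})$ acting by right translation on cosets. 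The composite equivariant bijection $\Gamma_{1,[\mathfrak{b}_i]}(\mathfrak{pn})\backslash\Gamma_{1,[\mathfrak{b}_i]}(\mathfrak{n})\longleftrightarrow \tilde{U}\backslash\tilde{G}$ obtained just above, in which the right $\Gamma_{1,[\mathfrak{b}_i]}(\mathfrak{n})$-action on $\tilde{U}\backslash\tilde{G}$ is the one induced by reduction modulo $p$, then produces an isomorphism of $\Gamma_{1,[\mathfrak{b}_i]}(\mathfrak{n})$-modules
\[{\rm Ind}^{\Gamma_{1,[\mathfrak{b}_i]}(\mathfrak{n})}_{\Gamma_{1,[\mathfrak{b}_i]}(\mathfrak{pn})}(\overline{\mathbb{F}}_p) \;\cong\; {\rm Ind}^{\tilde{G}}_{\tilde{U}}(\overline{\mathbb{F}}_p),\]
where the right-hand side is regarded as a $\Gamma_{1,[\mathfrak{b}_i]}(\mathfrak{n})$-module by pullback along reduction.

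Next, to identify ${\rm Ind}^{\tilde{G}}_{\tilde{U}}(\overline{\mathbb{F}}_p)$ with the claimed direct sum, I would apply transitivity of induction along the chain $\tilde{U}\subset\tilde{B}\subset\tilde{G}$ to rewrite it as ${\rm Ind}^{\tilde{G}}_{\tilde{B}}\bigl({\rm Ind}^{\tilde{B}}_{\tilde{U}}(\overline{\mathbb{F}}_p)\bigr)$. Since $\tilde{U}$ is the kernel of $\chi$ and hence normal in $\tilde{B}$, and $\chi$ identifies $\tilde{B}/\tilde{U}$ with $\mathbb{F}_{p^2}^*$, the inner induced module is the regular representation of $\mathbb{F}_{p^2}^*$. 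Because $p\nmid p^2-1$, this regular representation decomposes (as already recalled just above the proposition statement) as $\bigoplus_{d=0}^{p^2-2}\overline{\mathbb{F}}_p^{\chi^d}$ as $\tilde{B}$-modules. Applying ${\rm Ind}^{\tilde{G}}_{\tilde{B}}$ and using its commutation with direct sums then yields the decomposition claimed in (1).

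Part (2) follows from exactly the same scheme with $(\tilde{G},\tilde{B},\tilde{U})$ replaced by $(\tilde{S},\tilde{B}\cap\tilde{S},\tilde{U}\cap\tilde{S})$: Corollary \ref{cor1} supplies the required surjection of $\Gamma^1_{1,[\mathfrak{b}_i]}(\mathfrak{n})$ onto $\tilde{S}$, and $\chi$ again identifies $(\tilde{B}\cap\tilde{S})/(\tilde{U}\cap\tilde{S})$ with $\mathbb{F}_{p^2}^*$, so the regular-representation decomposition transfers verbatim. The main point demanding care, and the principal expected obstacle, is the $\mathrm{SL}_2$-analogue of the coset-space bijection: one must verify that the image of $\Gamma^1_{1,[\mathfrak{b}_i]}(\mathfrak{pn})$ under reduction modulo $p$ is exactly $\tilde{U}\cap\tilde{S}$ and that the kernel of this reduction is contained in $\Gamma^1_{1,[\mathfrak{b}_i]}(\mathfrak{pn})$, so that reduction induces a $\Gamma^1_{1,[\mathfrak{b}_i]}(\mathfrak{n})$-equivariant bijection $\Gamma^1_{1,[\mathfrak{b}_i]}(\mathfrak{pn})\backslash\Gamma^1_{1,[\mathfrak{b}_i]}(\mathfrak{n})\longleftrightarrow (\tilde{U}\cap\tilde{S})\backslash\tilde{S}$. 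Once this bookkeeping is completed, the transitivity-plus-decomposition step of part (1) applies unchanged.
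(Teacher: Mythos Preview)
Your proposal is correct and follows essentially the same route as the paper: use the coset bijection $\Gamma_{1,[\mathfrak{b}_i]}(\mathfrak{pn})\backslash\Gamma_{1,[\mathfrak{b}_i]}(\mathfrak{n})\longleftrightarrow\tilde{U}\backslash\tilde{G}$ to pass to ${\rm Ind}^{\tilde{G}}_{\tilde{U}}(\overline{\mathbb{F}}_p)$, then apply transitivity of induction through $\tilde{B}$ and the decomposition of the regular representation of $\mathbb{F}_{p^2}^*$, with the $\tilde{S}$-analogue handling part (2). The paper's own proof is the same chain of isomorphisms, stated a bit more tersely and without spelling out the verification you flag for the ${\rm SL}_2$ coset bijection.
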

\begin{proof}
Because of the bijection $ \Gamma_{1, [\mathfrak{b}_i]}(\mathfrak{pn}) \backslash \Gamma_{1, [ \mathfrak{b}_i]}(\mathfrak{n}) \longleftrightarrow  \tilde{U}\backslash \tilde{G}$ given by reducing modulo  $p,$  the transitivity of    $ {\rm Ind},$ and the observation above,  we have the following identifications of left  $\Gamma_{1 ,[\mathfrak{ b}_i]}(\mathfrak{n})$-modules:
\begin{eqnarray*}
{\rm Ind}^{\Gamma_{1, [\mathfrak{b}_i]}(\mathfrak{n})}_{\Gamma_{1, [\mathfrak{b}_i]}(\mathfrak{ pn})}  (\overline{\mathbb{F}}_p)  & \cong &   {\rm  Ind}^{\tilde{G}}_{\tilde{U}}(\overline{\mathbb{F}}_p) \\ 
& \cong  & {\rm  Ind}^{\tilde{G}}_{\tilde{B}}({\rm  Ind}^{\tilde{B}}_{\tilde{U}}( \overline{\mathbb{F}}_p)) \\ 
&\cong  & {\rm  Ind}^{\tilde{G}}_{\tilde{B}}( \oplus^{ p^2 -2}_{d = 0} (\overline{\mathbb{F}}^{\chi^d}_p)) \\
&\cong  &  \oplus^{p^2 -2}_{d = 0}{\rm  Ind}^{\tilde{G}}_{\tilde{B}}(\overline{\mathbb{F}}^{\chi^d}_p).
\end{eqnarray*}
For the second item, one uses the bijection $ \Gamma^1_{1, [\mathfrak{b}_i]}(\mathfrak{pn}) \backslash \Gamma^1_{1, [ \mathfrak{b}_i]}(\mathfrak{n}) \longleftrightarrow  (\tilde{U} \cap \tilde{S})\backslash \tilde{S}.$
\end{proof}
We shall need a more explicit version of ${\rm  Ind}^{\tilde{G}}_{\tilde{B}}(\overline{\mathbb{F}}^{\chi^d}_p).$
For  $ 0 \leq d \leq p^2 -2,$ we define the following  $ \tilde{G}$-module which we denote by  $  {\rm  U}_d(  \overline{\mathbb{F}}_p)\index{${\rm  U}_d(  \overline{\mathbb{F}}_p)$}:$  $$ {\rm U}_d(  \overline{\mathbb{F}}_p)  =\{  f :  \mathbb{F}^2_{p^2} \rightarrow \overline{\mathbb{F}}_p:   f( xa, xb) = x^d f( a, b) \, \forall \, x \in  \mathbb{F}^*_{ p^2} \}.$$
Next we  define the following homomorphism
\begin{eqnarray*} \varphi :  {\rm U}_d(\overline{\mathbb{F}}_{p})   & \rightarrow &  {\rm Ind }^{ \tilde{ G}}_{ \tilde{ B}} ( \overline{\mathbb{F}}^{ \chi^d}_{p})  \\
 F   & \mapsto &(\left(\begin{smallmatrix} a&b\\ c&e \end{smallmatrix}\right) \mapsto F(c, e)).
\end{eqnarray*}
We shall show that  it  is an isomorphism of $\tilde{G}$-modules.
It is well defined since \[\varphi( F )(\left( \begin{smallmatrix} x&y \\ 0& z \end{smallmatrix}\right) \left( \begin{smallmatrix} a&b \\ c&e \end{smallmatrix}\right)) =  F( zc, z e ) =  z^d F( c, e) = \chi^d( \left( \begin{smallmatrix} x&y \\ 0& z \end{smallmatrix}\right)) \varphi( F)( \left( \begin{smallmatrix} a&b \\ c&e \end{smallmatrix}\right)).\] 
It is also easy to see that  $\varphi$ is an $ \tilde{G}$-homomorphism. In order to conclude that $\varphi$ is an isomorphism,  one can  define the inverse $ \psi$ of $  \varphi$ as follows. We first note that for  $ c, \,  e \in \mathbb{ F}_{ p^2}$ not both zero we can find  $a , b \in \mathbb{ F}_{ p^2} $ such that $ a e - bc \neq 0.$ Hence an element $ (c, e) \neq (0,   0)$ gives rise to a matrix  $ \left(\begin{smallmatrix} a&b \\ c&e \end{smallmatrix}\right)$ in $ \tilde{ G}.$ Another choice of $ a', b'$ with $ a'e - b' c \neq 0 $ amounts to multiply  $ \left(\begin{smallmatrix} a& b \\ c& e \end{smallmatrix}\right) $  from the left by a matrix of the form $ \left(\begin{smallmatrix} 1& * \\ 0 & 1  \end{smallmatrix}\right)$ which acts trivially on $ \overline{\mathbb{ F}}^{ \chi^d }_{ p}.$ This implies that the map \label{FirstId}
\begin{eqnarray*} \psi:  {\rm Ind }^{\tilde{G}}_{\tilde { B}} ( \overline{\mathbb{F}}^{ \chi^d}_{p})  & \rightarrow &  {\rm U }_d ( \overline{\mathbb{F}}_{p}) \\
 f  & \mapsto & ((c , e) \mapsto f( \left( \begin{smallmatrix} a& b \\ c & e \end{smallmatrix}\right)),  
\end{eqnarray*} is well defined,  that is,  to mean that any choice of $ a, b \in \mathbb{ F}_{ p^2} $ with $ ae - bc \neq 0$ will do. Furthermore it is easy to verify that it is an $ \tilde{G}$-homomorphism and it is the inverse of $ \varphi.$ 

In the next remark, there is another proof of the isomorphism of $ \tilde{G}$-modules $:{\rm  Ind}^{\tilde{G}}_{\tilde{B}} (\overline{\mathbb{F}}^{\chi^d}_p) \cong {\rm U}_d(\overline{\mathbb{F}}_p).$
\begin{remark} 
We start with the identification of $ \overline{\mathbb{F}}_{p}$-vector spaces \[\overline{\mathbb{F}}_{p}[ X, Y]/ ( X^{ p^2} - X, Y^{ p^2} - Y) \cong \{ f : \mathbb{F}_{ p^2}^2 \rightarrow \overline{\mathbb{F}}_{p} \}\] where $ P( X, Y)$ maps to the function  $ ( a, b) \mapsto P( a, b).$ To see this we observe that the spaces on both sides have dimensions $ p^4  $ as $ \overline{\mathbb{F}}_{p}$-vector spaces. So, we just have to prove injectivity. To this end for any $x \in \mathbb{F}_{p^2} $ if the polynomial $ f_x( Y)  = P(x, Y  ) \in \overline{\mathbb{F}}_{p}[ Y]$  vanishes for all $ y \in \mathbb{ F}_{ p^2},$ then this means that $  Y $  and $ Y^{ p^2 - 1} - 1 $ divide $ f_x( Y)$ for all $ x \in \mathbb{ F}_{ p^2}.$ Because $x$ is arbitrarily chosen we deduce that $ Y^{ p^2} -Y $ divides $ P( X, Y).$   As the role of $ X$ and $ Y$ are symmetric, one obtains that $ P(X, Y)$ lies in the ideal $ ( X^{ p^2} - X, Y^{p^2} - Y).$ In fact this is  an isomorphism of $ \overline{\mathbb{F}}_{p}[\tilde{G}]$-modules.
Let  $ \mathcal{ W}(p, \overline{\mathbb{F}}_{p}) := \{ f \in \overline{\mathbb{F}}_{p}[X, Y]/ ( X^{ p^2} - X, Y^{ p^2} - Y) : f( ( 0, 0)) = 0\}.$ This module can be identified with ${\rm Ind}^{\Gamma_{1, [\mathfrak{b}_i]}(\mathfrak{n})}_{\Gamma_{ 1, [\mathfrak{b}_i]}(\mathfrak{ pn})}(\overline{\mathbb{F}}_p)$ as $\Gamma_{1, [\mathfrak{b}_i]}(\mathfrak{n})$-module, see \cite{Gabor} for details.
Then $  {\rm U}_d ( \overline{\mathbb{F}}_{p})$ is the subspace of homogeneous polynomial classes of degree $d$ in $ \overline{\mathbb{F}}_{p}[ X, Y]/ ( X^{ p^2} - X, Y^{ p^2} - Y)$ with $  f( (0, 0)) = 0$.
And  as  a graded $\Gamma_{1, [\mathfrak{b}_i]}(\mathfrak{n})$-module $\mathcal{W}(p,  \overline{\mathbb{F}}_{p}) $ decomposes as follows: \[{\rm \mathcal{W}}(p, \overline{\mathbb{F}}_{p}) = \oplus^{ p^2 -2}_{ d = 0} {\rm U }_d( \overline{\mathbb{F}}_{p}).\]
\end{remark}
The  isomorphism  in Proposition \ref{Prop2-2} will permit us to obtain a better understanding of the non-semisimple $\tilde{G}$-module $ {\rm U }_{d}(\overline{\mathbb{F}}_{p}).$ We shall turn to this among other things.
\section{Irreducible $ \tilde{G}$-modules}\label{sec2_4}
We keep the same notation as in the previous sections. We will prove here the main results.   For an irreducible $ \overline{\mathbb{F}}_{p}[{\tilde{G}}]$-module $W,$ this is done by embedding a cohomology group with coefficients in $ W$ into another cohomology group with trivial coefficients roughly speaking.

First of all, we shall see how the irreducible $ \overline{\mathbb{F}}_{p}[ \tilde{G}]$-modules 
can be embedded in a twist of  $ {\rm U}_d( \overline{\mathbb{F}}_p).$ Let $ \tau$ be the non-trivial automorphism of $ \mathbb{ F}_{p^2}.$ For $ 0 \leq r,  s \leq  p-1,  0\leq l, t, \leq p-1,$ recall that when $ l$ and $t$ are not both equal to $ p-1,$
the representations $$ V^{l, t}_{r, s} (\overline{\mathbb{F}}_{p})  :=  {\rm Sym}^r (\overline{\mathbb{ F}}^2_{p}) \otimes_{\mathbb{F}_{p^2}} det^l \otimes_{ \mathbb{F}_{p^2}}  {\rm  Sym}^s (\overline{\mathbb{ F}}^2_{p})^{\tau}\otimes_{ \mathbb{F}_{p^2}} (det^t)^\tau,$$ exhaust all the irreducible $\overline{\mathbb{F}}_{p} [  \tilde{G} ]$-modules.  Here, we identify $ {\rm Sym}^r( \overline{\mathbb{ F}}^2_{ p})$ with the homogeneous polynomials in the variables $ X, Y$ over $ \overline{\mathbb{F}}_{p}$ of degree $ r$ which we denote by $ \overline{\mathbb{F}}_{p}[ X,  Y]_r.$   
A matrix  $  \left( \begin{smallmatrix} a&b\\ c&d \end{smallmatrix}\right) \in \tilde{G}$ acts from the left on $  V^{l, t}_{ r, s}(\overline{\mathbb{F}}_{p})$  as follows: on the first factor $ \left( \begin{smallmatrix} a&b\\ c&d \end{smallmatrix}\right). X^ iY^j := ( aX + b Y)^i( cX + d Y )^j $ followed by multiplication by  $ (ad -bc)^l$ and on the second factor we apply first $ \tau$ on $ \left( \begin{smallmatrix} a&b\\ c&d \end{smallmatrix}\right)$ and proceed as for the first factor followed by multiplication by $ (ad-bc)^{pt}, $ e.g, $$  \left(\begin{smallmatrix} a&b\\ c&d \end{smallmatrix}\right).X^iY^j\otimes X^{ i'} Y^{j'}:= (ad -bc )^{ l+ pt }(aX + b Y)^i( cX + d Y )^j\otimes ( a^pX + b^p Y)^{i'}( c^pX + d^p Y )^{j'}.$$
For $  e \geq 0, $  we write  $ {\rm U }^e_d( \overline{\mathbb{F}}_{p}) $ to mean  the $ \overline{\mathbb{F}}_p[\tilde{G}]$-module $ {\rm  U}_d (\overline{\mathbb{F}}_{p}) $ with the natural action of $ \tilde{G}$ followed by multiplication by $det^e, $ e.g,  ${\rm U }^e_d( \overline{\mathbb{F}}_{p})\index{${\rm U }^e_d( \overline{\mathbb{F}}_{p})$} =  {\rm U }_d( \overline{\mathbb{F}}_{p})\otimes_{ \mathbb{ F }_{ p^2}} det^e.$
\begin{lemma}\label{emblem}
We have the following embedding of left $ \overline{\mathbb{F}}_{p}[ \tilde{G}]$-modules 
\begin{eqnarray*}\label{equ1} \Psi: V^{q, t}_{ r,  s}( \overline{\mathbb{F}}_{p})  &\rightarrow &   {\rm U}^{ q +pt}_{r  + ps}(\overline{\mathbb{F}}_{p}) \\ 
 f\otimes g  & \mapsto & ((  a, b) \mapsto f(a, b) g( a^p, b^p )).
\end{eqnarray*}
\end{lemma}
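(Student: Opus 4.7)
The plan is to verify in turn that $\Psi$ is well-defined, $\tilde{G}$-equivariant, and injective.

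First I would check well-definedness: that $\Psi(f\otimes g)$ lies in ${\rm U}^{q+pt}_{r+ps}(\overline{\mathbb{F}}_{p})$. Since $f\in {\rm Sym}^r(\overline{\mathbb{F}}_p^2)$ is homogeneous of degree $r$ and $g\in {\rm Sym}^s(\overline{\mathbb{F}}_p^2)$ of degree $s$, for $x\in \mathbb{F}_{p^2}^{*}$ one computes
\[
\Psi(f\otimes g)(xa,xb)=f(xa,xb)\,g((xa)^p,(xb)^p)=x^{r}f(a,b)\,x^{ps}g(a^p,b^p)=x^{r+ps}\Psi(f\otimes g)(a,b),
\]
confirming the homogeneity defining ${\rm U}_{r+ps}$; extending by linearity then yields a well-defined $\overline{\mathbb{F}}_p$-linear map with the correct target.

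Next I would verify $\tilde{G}$-equivariance on a pure tensor $f\otimes g$, then extend by linearity. The two twists by $(\det M)^{q+pt}$ on the source and target appear identically on both sides of $\Psi(M.(f\otimes g))=M.\Psi(f\otimes g)$ and may be discarded, so the check reduces to the core module ${\rm Sym}^r\otimes({\rm Sym}^s)^{\tau}\to {\rm U}_{r+ps}$. The conceptual heart of the verification is that the Frobenius twist acting on the second factor is absorbed into the evaluation at $(a^p,b^p)$ built into the definition of $\Psi$: because of the Frobenius identity $(aU+bV)^p=a^pU^p+b^pV^p$ valid in characteristic $p$, applying the $\tau$-twisted action of $M$ (that is, using the matrix $\bigl(\begin{smallmatrix}a^p&b^p\\ c^p&d^p\end{smallmatrix}\bigr)$) to $g$ and then evaluating at $(a^p,b^p)$ gives precisely the $p$-th power of what the untwisted action of $M$ on $g$ produces at $(a,b)$. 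Matching this with the $\tilde{G}$-action on ${\rm U}_d$ derived from the isomorphism $\varphi$ of Section~\ref{sec2_3} then produces the desired equality as functions on $\mathbb{F}_{p^2}^2$.

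Finally, for injectivity I would use irreducibility of $V^{q,t}_{r,s}(\overline{\mathbb{F}}_p)$ to reduce to showing that $\Psi$ is nonzero; concretely I would compute the images of the monomial basis $\{X^{i}Y^{r-i}\otimes X^{i'}Y^{s-i'}:0\le i\le r,\ 0\le i'\le s\}$ and get the function $(a,b)\mapsto a^{i+pi'}b^{(r-i)+p(s-i')}$. Because $r,s\le p-1$, the exponents $i+pi'$ are exactly the distinct base-$p$ expansions of integers in $\{0,\ldots,p^2-1\}$, and similarly for the second coordinate, so the resulting monomials $X^{m}Y^{n}$ with $0\le m,n\le p^2-1$ are pairwise distinct and therefore linearly independent as functions on $\mathbb{F}_{p^2}^2$, since such monomials form a basis of $\overline{\mathbb{F}}_{p}[X,Y]/(X^{p^2}-X,Y^{p^2}-Y)$ as recorded in the preceding remark. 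Hence $\Psi$ is injective, completing the proof. I expect the equivariance step to be the principal bookkeeping obstacle because of the interplay between the three actions (on ${\rm Sym}^r$, on $({\rm Sym}^s)^{\tau}$, and on ${\rm U}_d$) and the need to invoke the Frobenius identity to reconcile them.
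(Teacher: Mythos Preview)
Your proposal is correct and follows essentially the same approach as the paper: both verify $\tilde{G}$-equivariance by the direct computation using the Frobenius identity $(aX+bY)^p=a^pX^p+b^pY^p$ in characteristic $p$, which is exactly the ``conceptual heart'' you describe. Your treatment is in fact more complete than the paper's, which only writes out the equivariance step and leaves well-definedness and injectivity implicit; your explicit homogeneity check and your injectivity argument (either via irreducibility of the source, or via the distinctness of the base-$p$ exponents $i+pi'$ of the image monomials) fill those gaps cleanly.
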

\begin{proof}
In polynomial terms we can write  $\Psi ( f(X, Y) \otimes g(X, Y) ) = f( X, Y)g( X^p, Y^p).$
By definition of $ \Psi$ we have $ \Psi ( \sum f_i\otimes g_i )=\sum \Psi( f_i\otimes g_i).$ Now let  $M = \left( \begin{smallmatrix} a&b\\ c&d \end{smallmatrix}\right)  \in \tilde{G},$ we need to check that $ \Psi(  M.f\otimes M^{ \tau}. g )  =  M.\Psi(  f\otimes g).$ 
For $ f( X, Y) =\sum_{ n+ m = r } a_{ n, m} X^nY^m,$ $ g( X, Y) = \sum_{ l+ k = s} b_{l, k}X^lY^k,$ then $ M.f  = (ad -bc )^q\sum_{n+m = r} a_{n, m} (aX+ bY)^n( cX + dY )^m$ and $ M^{ \tau}.g = (ad -bc )^{ pt}\sum_{ l+k = s}b_{l,k}( a^pX +  b^pY )^l( c^pX + d^pY)^k.$
We set  $\alpha = (ad -bc )^{ q+ pt}.$ Hence by denoting $ \Psi (M.( f\otimes g ))$ as $(*),$ we have
\begin{eqnarray*}
 (*)& = &\alpha\sum_{ n+m = r } \sum_{ l+k = s} a_{ n, m}( aX +   bY )^n( cX + dY )^m b_{ l, k}( a^{ p} X^p + b^{p}Y^p)^l  (c^{ p}X^p +  d^{ p}Y^p)^k \\
&= & \alpha\sum_{ n+m = r }\sum_{l+k = s}  a_{ n, m}( aX + bY )^n( cX + dY )^m b_{ l, k}( aX + bY)^{pl} ( cX + dY)^{pk}	
\\  &= & M.\Psi ( f\otimes g).
\end{eqnarray*}
\end{proof}
One would then like to have a more concrete description of the cokernel of $\Psi$. In other words, one has to compute the Jordan-H\"{o}lder series of  $  {\rm U}^e_{ r +ps}( \overline{\mathbb{F}}_p).$
\begin{remark}
In the special case $  s = 0,$ the semi-simplification of $ {\rm U }^e_{r}(  k ) $ for $ k$  a finite field can be obtained by immediate generalization of the case $ k = \mathbb{ F}_p$ which is treated for instance in \cite{Gabor}. But as it seems that this method does not apply when $ s > 0,$ we will naturally follow the  Brauer character theory approach which gives the semisimplification of $ {\rm U }^e_{ d}( k  )$ in complete generality.
\end{remark}
For our purpose we shall next see that 
\[( {\rm U }^{}_{r + ps} (\overline{\mathbb{F}}_{p} ))^{ss} = V^{0, 0}_{r, s}( \overline{\mathbb{F}}_p )\oplus V^{r,s}_{ p- r -1, p-1- s}( \overline{\mathbb{F}}_p )\oplus V^{0, s+1}_{r-1, p-2 - s}( \overline{\mathbb{F}}_p ) \oplus V^{r+1,0}_{p -r- 2, s-1}( \overline{\mathbb{F}}_p ).\]
From this we deduce the semisimplification of $  {\rm U }^{e}_{r + ps}(\overline{\mathbb{F}}_p)$ by twisting.
\subsection{The constituents of  $ {\rm U }^{e}_{r + ps}(\overline{\mathbb{F}}_p)$}
Let $ k$ be a  finite field and $ \mathfrak{G} = {\rm GL}_2(k), \,\textrm{and}\, \mathfrak{B}$ its Borel subgroup of upper triangular matrices. For a character $ \phi$ of $ \mathfrak{B}$ with values in  $\overline{\mathbb{F}}_p,$ we consider  ${\rm Ind }^{\mathfrak{G}}_{\mathfrak{B}}( \overline{\mathbb{F}}^{\phi^d}_p)$ where  $ 0 \leq\ d  \leq \sharp k -2.$ The semisimplication of $ {\rm Ind }^{\mathfrak{G}}_{\mathfrak{B}}( \overline{\mathbb{F}}^{ \phi^d}_p)$  is computed  in \cite{Diamond} via Brauer character theory. Given two homomorphisms $ \chi_1, \,\chi_2 :  k^* \rightarrow \overline{\mathbb{ Q }}^* (\,  \textrm{or}\,\,  \overline{\mathbb{ Q}}^*_p ),$ one obtains a character of $ \mathfrak{B}$ induced by $ \chi_1,  \, \chi_2$ as \[ \left(\begin{smallmatrix} a&b \\ 0& e \end{smallmatrix}\right) \mapsto \chi_1( a)\chi_2( e).\]
Furthermore for $  V = \overline{\mathbb{Q}}\,\, (\,\,\textrm{ or}\,\, \overline{\mathbb{ Q}}_p)$ let $ {\rm I}( \chi_1, \chi_2) := {\rm Ind}^{\mathfrak{G}}_{\mathfrak{B}} ( V^{\chi_1, \chi_2})$ where $${\rm Ind}^{\mathfrak{G}}_{\mathfrak{B}} ( V^{\chi_1, \chi_2})  = \{f : \mathfrak{G} \rightarrow V: f (  \left(\begin{smallmatrix} a&b \\ 0& e \end{smallmatrix}\right) g) = \chi_1(a)\chi_2( e) f( g)\,\, \forall \left(\begin{smallmatrix} a&b \\ 0& e \end{smallmatrix}\right) \in \mathfrak{B},\, g \in \mathfrak{G}\}. $$ This is a $ ( q+1)$-dimensional representation of $ \tilde{G}$ where $ q = \sharp k.$ It is known as  a principal series representation of $ \mathfrak{G}.$  Next let $ E $ be the set of embeddings $ k  \rightarrow \overline{{\mathbb{ F}}}_p$.  Then the complete list of irreducible  $ \overline{\mathbb{F}}_p$-representations of $\mathfrak{G}$ is given by:\[
R_{\overrightarrow{m}, \overrightarrow{n}}  =   \otimes_{ \tau\in E}  ({\rm Sym }^{n_{ \tau} -1}  (k^2) ))^{\tau} \otimes (det^{m_{ \tau}})^\tau \otimes \overline{{\mathbb{F}}}_{ p};\]for integers $ 0 \leq m_{ \tau} \leq  p-1 $ and $  1\leq n_{\tau} \leq p $ associated with each $ \tau \in E,$ and some $ n_\tau$ is less than $ p-1.$ Here one makes  the convention  $ {\rm Sym^{-1} } ( k^2) = \{  0\}, $ the null module. Before we go further, note that in our notation we have $$ V^{l, t}_{r, s}(\overline{\mathbb{F}}_p)  =  R_{ (l, t), ( r + 1, s+ 1)}$$ as irreducible $ \mathfrak{G}$-modules.

To obtain the semisimplification of an  $\overline{\mathbb{ F}}_p$-representation of $ \mathfrak{G},$ the approach is via Brauer character theory. One starts with a $\overline{\mathbb{Q}}_p$-representation $ W$ of  $ \mathfrak{G},$ and reduction modulo the maximal ideal of $\overline{\mathbb{Z}}_p$ yields an $ \overline{\mathbb{ F}}_p$-representation of $ \mathfrak{G}.$ More precisely for such a $ W,$ we know that there exists a $ \overline{\mathbb{Z}}_p $-lattice $ L$ inside $ W$ invariant under the action of $ \mathfrak{G}.$ Then  reduction of $ L$ modulo the maximal ideal of $ \overline{\mathbb{Z}}_p$ gives rise to an $ \overline{ \mathbb{F}}_p$-representation whose Brauer character is the restriction of the character of $ W$ to the $ p$-regular classes of $ \mathfrak{G}.$ In this way the semisimplification thus obtained is independent of the lattice $ L.$

Any group homomorphism  $ \varphi :k^* \rightarrow \overline{\mathbb{Q}}^*_p $ can be written as $\varphi = \prod_{ \tau} \tilde{ \tau}^{ a_{ \tau}}$ with $ 0\leq a_{\tau} \leq p-1 $  and $\tilde{\tau}$ the Teichm\"{u}ller lift of $ \tau.$ Then the  reduction of $\varphi$ is $ \bar{ \varphi} = \prod_{\tau} \tau^{a_{\tau}}.$ By a twist it suffices to consider the irreducible representation of the form $ {\rm I }( 1, \chi).$ Then  it was shown in \cite{Diamond} that
\begin{proposition}[Diamond]\label{propDia}
Let $   M =  {\rm  I} (1 ,\prod_{\tau} \tilde{ \tau}^{ a_{ \tau}})$ with $ 0 \leq a_{\tau} \leq p-1 $ for each $  \tau \in E.$  Let $Frob$ be the Frobenius in $ E.$ Then the semisimplification of the reduction $ \overline{M}$ of $ M$ is $ \overline{M } \cong \oplus_{ J \subset S } M_J$ with $ M_J  = R_{\overrightarrow{m}_J, \overrightarrow{ n}_J },$ where 
\begin{eqnarray*}
m_{J , \tau} = \begin{cases}  0 \,\,\,\,\, \textrm{if} \,\,\,\,\tau \in  J \\
a_{\tau} + \delta_{ J}(\tau)\,\,\,\,\, \textrm{otherwise};
\end{cases}
n_{J, \tau} = \begin{cases}  a_{\tau} + \delta_{ J}( \tau )\,\, \,\,\,\, if \,\,\,\,\, \tau \in J \\
 p - a_{\tau} - \delta_{J} (\tau )\,\, \,\, \,otherwise;
\end{cases}\\
with \,\,\delta_{ J}\, \,the \,\, characteristic \,\, function \,\, of \,\,  J^{ ( p) } = \{ \tau\circ Frob: \tau \in J\}.
\end{eqnarray*}
\end{proposition}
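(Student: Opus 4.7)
The approach is via Brauer character theory, as indicated in the remark preceding the statement. Two finite-dimensional $\overline{\mathbb{F}}_p[\mathfrak{G}]$-modules have isomorphic semisimplifications if and only if they have the same Brauer character, so the proposition reduces to the class function identity
\begin{equation*}
\chi_{\overline{M}}(g) \;=\; \sum_{J \subseteq E} \chi_{M_J}(g) \qquad \text{for every $p$-regular $g \in \mathfrak{G}$.}
\end{equation*}
By the recipe spelled out in the text, $\chi_{\overline M}$ is the restriction of the ordinary character of $M$ to the $p$-regular classes, computed via any $\mathfrak{G}$-stable $\overline{\mathbb{Z}}_p$-lattice in $M$ (such a lattice exists because $\mathfrak{G}$ is finite, and the resulting Brauer character is independent of the choice).

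I would then enumerate the $p$-regular conjugacy classes of $\mathfrak{G} = \mathrm{GL}_2(k)$: the scalars $\mathrm{diag}(x,x)$ with $x \in k^\ast$, the split semisimple classes $\mathrm{diag}(x,y)$ with $x \neq y$ in $k^\ast$, and the non-split semisimple classes with eigenvalues $\xi, \xi^q \in k_2^\ast \setminus k^\ast$, where $q = \sharp k$. Non-trivial unipotents have order $p$ and are discarded. On each such class I would write down the ordinary character of the principal series $M = \mathrm{I}(1,\chi)$ with $\chi = \prod_\tau \tilde\tau^{a_\tau}$ via the standard induced-character formula: at $\mathrm{diag}(x,y)$ with $x \neq y$ it is $\chi(x) + \chi(y) = \prod_\tau \tilde\tau(x)^{a_\tau} + \prod_\tau \tilde\tau(y)^{a_\tau}$, on scalars it is $(q+1)\chi(x)$, and on the non-split torus it vanishes.

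On the other side, the Brauer character of the tensor product $M_J = \bigotimes_\tau \bigl(\mathrm{Sym}^{n_{J,\tau}-1}(k^2)\bigr)^\tau \otimes (\det^{m_{J,\tau}})^\tau$ factors through the embeddings $\tau$; at $\mathrm{diag}(x,y)$ with $\tilde\tau(x) \neq \tilde\tau(y)$ one reads off
\begin{equation*}
\chi_{M_J}\bigl(\mathrm{diag}(x,y)\bigr) \;=\; \prod_{\tau \in E} \tilde\tau(xy)^{m_{J,\tau}}\cdot \frac{\tilde\tau(x)^{n_{J,\tau}} - \tilde\tau(y)^{n_{J,\tau}}}{\tilde\tau(x) - \tilde\tau(y)},
\end{equation*}
with the natural specialization when $\tilde\tau(x) = \tilde\tau(y)$ and an analogous expression on the non-split torus (where each $\tau$ is replaced by one of the two extensions $k_2 \hookrightarrow \overline{\mathbb{Q}}_p$, the two contributions swapping $\xi \leftrightarrow \xi^q$). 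The proposition then reduces to verifying the character identity
\begin{equation*}
\prod_{\tau \in E} \tilde\tau(x)^{a_\tau} + \prod_{\tau \in E} \tilde\tau(y)^{a_\tau} \;=\; \sum_{J \subseteq E}\,\prod_{\tau \in E} \tilde\tau(xy)^{m_{J,\tau}}\cdot \frac{\tilde\tau(x)^{n_{J,\tau}} - \tilde\tau(y)^{n_{J,\tau}}}{\tilde\tau(x) - \tilde\tau(y)}
\end{equation*}
for $x, y \in k^\ast$, together with its analogues on the scalar and non-split classes.

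The main obstacle is the combinatorial verification of this identity. Expanding each factor $\tilde\tau(x)^{n_{J,\tau}} - \tilde\tau(y)^{n_{J,\tau}}$ into two monomials produces $2^{|E|}$ contributions per $J$, and the resulting terms should pair off with those coming from $J \mathbin{\triangle} \{\tau\}$ for varying $\tau \in E$, leaving only the two ``pure'' monomials $\prod_\tau \tilde\tau(x)^{a_\tau}$ and $\prod_\tau \tilde\tau(y)^{a_\tau}$ after using $\tilde\tau(x)^p = \widetilde{\mathrm{Frob}\circ \tau}(x)$. The Frobenius shift $\delta_J(\tau) = \delta_{J^{(p)}}(\tau)$ in the definitions of $m_{J,\tau}$ and $n_{J,\tau}$ is precisely what is needed to align these cancellations across the different $\tau$. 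I expect the boundary cases $a_\tau = 0$ or $a_\tau = p-1$ to require separate attention, because the convention $\mathrm{Sym}^{-1} = 0$ collapses certain $M_J$, and the non-split torus to need its own short computation exploiting the $\mathrm{Gal}(k_2/k)$-action on $\xi$. Once the identity is checked on every $p$-regular class, linear independence of the Brauer characters of the distinct irreducibles $R_{\vec m_J, \vec n_J}$ yields the stated decomposition of $\overline M$.
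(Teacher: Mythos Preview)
The paper does not actually prove this proposition: it is quoted from Diamond's paper \cite{Diamond} and stated without proof, preceded only by a paragraph explaining that the method is Brauer character theory and that the Brauer character of $\overline{M}$ is the restriction of the ordinary character of $M$ to the $p$-regular classes. Your proposal is precisely a fleshing-out of that remark, and it is the correct strategy; the enumeration of $p$-regular classes, the induced-character values for $\mathrm{I}(1,\chi)$, and the Brauer character of each $M_J$ are all written down correctly.

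Since there is no in-paper proof to compare against, the only thing to flag is that your sketch stops short of actually carrying out the combinatorial cancellation you describe. The pairing of terms between $J$ and $J\mathbin{\triangle}\{\tau\}$ is the right intuition, but making it precise requires tracking how the exponent shift $\delta_J$ interacts with the Frobenius relation $\tilde\tau(x)^p=\widetilde{\tau\circ\mathrm{Frob}}(x)$ across \emph{all} factors simultaneously, and one has to check that the surviving monomials really are only the two ``pure'' ones. You correctly anticipate that the degenerate cases $n_{J,\tau}=0$ (i.e.\ $a_\tau+\delta_J(\tau)\in\{0,p\}$) collapse some $M_J$ via the convention $\mathrm{Sym}^{-1}=0$, and these must be handled separately rather than swept into the generic identity. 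None of this is a gap in approach, only in execution; the argument as outlined would go through once those bookkeeping details are written out, and it matches what the paper attributes to Diamond.
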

We shall next specialize to our setting. We take  $\mathfrak{G} = \tilde{G}$ and  $\mathfrak{B} = \tilde{B}.$
Let  $\overline{\psi}:\mathbb{F}_{ p^2} \hookrightarrow \overline{ \mathbb{ F}}_p $ be a fixed injection. We also denote by $ \overline{\psi}$ the character obtained by restriction to $ \mathbb{ F}^*_{ p^2}$.
Now, let $\psi :  \mathbb{F}^*_{ p^2} \rightarrow \overline{\mathbb{ Q}}^*_p$  be the Teichm\"{u}ller lift of $ \overline{\psi}.$ The homomorphism $\psi$ induces the character of  $\tilde{B}:$ \[\tilde{ B} \rightarrow \overline{\mathbb{Q}}^*_p;\, \,  \,  \left(\begin{smallmatrix} x&y \\ 0&z \end{smallmatrix}\right)
\mapsto \psi( z).\]
Via $\psi,$ $\overline{\mathbb{Q}}_p$  is endowed with a structure of $\tilde{ B}$-module which we denote $ \overline{\mathbb{Q}}_p^{\psi}:$ for  $ h \in \tilde{B},  x  \in \overline{\mathbb{Q}}_p$ we have  $ h.x = \psi(h)x.$ The  $\overline{\mathbb{Q}}_p$-representation $$ {\rm I } (1, \psi) = {\rm Ind }^{\tilde{ G}}_{\tilde{B}} ( \overline{\mathbb{Q}}_p^{ \psi}) = \{ f:  \tilde{G} \rightarrow \overline{\mathbb{Q}}_p : f( hg ) = \psi(h) f( g) \,\, \forall h\in \tilde{ B},\, g \in \tilde{ G} \}$$ of $ \tilde{G} $ has reduction the $ \overline{\mathbb{ F}}_p$-representation $ {\rm  Ind}^{ \tilde{ G}}_{ \tilde{ B}} ( \overline{\mathbb{F}}_p^{ \overline{\psi}})$ 
where by abuse of notation $\overline{\psi}$ is the character of $\tilde{B}$  induced by $ \overline{\psi}:$   \[\tilde{B} \rightarrow \overline{{\mathbb{ F}}}^*_p; \,\, \left(\begin{smallmatrix}  x&y \\ 0& z \end{smallmatrix}\right)\mapsto \overline{\psi( z)}.\]
For $ 0 \leq d\leq p^2-2,$ we consider the $\overline{\mathbb{F}}_p$-representations of  $\tilde{G}$ : $ {\rm Ind }^{ \tilde{G}}_{\tilde{B}}(\overline{\mathbb{F}}^{\overline{\psi}^d}_{p})\index{$ {\rm Ind }^{ \tilde{G}}_{\tilde{B}}(\overline{\mathbb{F}}^{\overline{\psi}^d}_{p})$}.$ We write  $d =  r + ps$ with $ 0 \leq r , s \leq  p-1,$   $E  = \{ id, \tau  \}$ so that  $ \overline{\psi }^d  =  id^r \tau^s.$  From Proposition \ref{propDia},  we have \[({\rm U}^{}_{r + ps} (\overline{\mathbb{F}}_{p} ))^{ss} = V^{0, 0}_{r, s}( \overline{\mathbb{F}}_p )\oplus V^{r,s}_{p- r -1, p-1- s}( \overline{\mathbb{F}}_p )\oplus V^{0, s+1}_{r-1, p-2 - s}( \overline{\mathbb{F}}_p ) \oplus V^{r+1,0}_{p -r- 2, s-1}( \overline{\mathbb{F}}_p );\]
where we have used the identification of $ {\rm U}_d( \overline{\mathbb{F}}_{p}) $ with $ {\rm Ind }^{ \tilde{ G}}_{ \tilde{ B}} ( \overline{\mathbb{F}}^{\overline{\psi}^d}_{p})$ as $ \overline{\mathbb{F}}_{p}[\tilde{G}]$-modules from page \pageref{FirstId}.
We define the representation $ W^{l, t}_{r, s}$ by the exact sequence\[ 
0\rightarrow V^{l, t}_{r, s}( \overline{\mathbb{F}}_p) \rightarrow {\rm U}^{ l+ pt }_{ r + ps} ( \overline{\mathbb{ F}}_{p}) \rightarrow W^{l, t}_{r, s} \rightarrow 0.\] Thus the semisimplification of $ W^{l, t}_{r, s}$ is $$ (W^{l, t}_{r, s})^{ss} = V^{r+l, s+t}_{p- r- 1, p-s- 1}( \overline{\mathbb{F}}_p) \oplus V^{ l, s+1+t}_{r-1,  p- s- 2 }( \overline{\mathbb{F}}_p)\oplus  V^{r+l+1, t}_{p - r -2, s- 1}(\overline{\mathbb{F}}_p).$$
\subsection{Some invariants}
Let $\Gamma_{1, [\mathfrak{b}_i]}(\mathfrak{n})$ be the congruence subgroups of $ G(F) $ defined in Section \ref{sect2_2}. We view $ \overline{\mathbb{F}}_{p}$ as a trivial left $ \tilde{G}$-module. We need to remind us once more how  Hecke operators act on the degree zero group cohomology. Let  $ g \in \Delta^\mathfrak{q}_1(\mathfrak{n}) $ where $  \Delta^\mathfrak{q}_1(\mathfrak{n}) $ is the subset of $ Mat_2(\hat{\mathcal{O}})_{\neq 0}$ defined in Section \ref{sect2_2}.
From Lemma \ref{lem2_3_0}, we have that for each  $ i,   1 \leq  i \leq h,$ there are  a unique index $ j_i$ and   a matrix  $ \beta_i \in \Lambda^\mathfrak{c}_{1, [\mathfrak{b}_{i}]}(\mathfrak{n})$  such that $  K_1(\mathfrak{n}) g K_1(\mathfrak{n}) = K_1(\mathfrak{n}) g_{j_i}\beta_i g^{-1}_{i} K_1(\mathfrak{n}) $  with $ g_i$ the matrices corresponding to the ideal classes as defined in Subsection \ref{Subsection2_2_2}. 

Let  $ M$ be a finite dimensional left $ \overline{\mathbb{F}}_p[\tilde{G}]$-module.  We have seen that the Hecke operator corresponding to the double coset $ K_1(\mathfrak{n})gK_1(\mathfrak{n})$ which we have denoted as $T_g$ sends $ (m_1, \cdots, m_h) \in \oplus^h_{ i = 1}{\rm H}^0(\Gamma_{1, [\mathfrak{ b}_i]}(\mathfrak{n}), M) $ to   $ (n_1, \cdots, n_h)$ with  $ n_{j_i} = T_{\beta_i}.m_i.$ Here $ T_{\beta_i}$ is the Hecke operator corresponding to the double coset $\Gamma_{1, [\mathfrak{b}_{j_i}]}(\mathfrak{n})\beta_i \Gamma_{1, [\mathfrak{b}_{i}]}(\mathfrak{n}).$
Explicitly one defines $ \Gamma''^{, \beta_i}_{1, [\mathfrak{b}_{j_i} ]}(\mathfrak{n}) = \beta_i\Gamma_{1, [\mathfrak{b}_i]}(\mathfrak{n})\beta^{-1}_i\cap \Gamma_{1, [\mathfrak{b}_{j_i}]}( \mathfrak{n}), $ then we have
\begin{eqnarray*}
T_{\beta_{i}}: {\rm H}^0(\Gamma_{1, [\mathfrak{b}_i]}(\mathfrak{n}), M)  & \rightarrow& {\rm H}^0(\Gamma_{1, [\mathfrak{b}_{j_i}]}(\mathfrak{n}), M) \\
m  &  \mapsto & \sum_{\lambda\in\Gamma_{1, [\mathfrak{b}_{j_i}]}(\mathfrak{n})/ \Gamma''^{, \beta_i}_{1, [\mathfrak{b}_{j_i}]}(\mathfrak{n})} (\lambda\beta_i).m.
\end{eqnarray*}
This being said here are the $ \Gamma^1_{1, [\mathfrak{b}_i]}(\mathfrak{n})$ and $ \Gamma_{1, [\mathfrak{b}_i]}(\mathfrak{n})$-invariants for $  {\rm U }^e_d( \overline{\mathbb{F}}_{p}),$  $ V^{l, t}_{r, s}(\overline{\mathbb{F}}_p),$   $ (W^{l, t}_{r ,s})^{ ss}$ and $  W^{l, t}_{ r, s}.$
\begin{lemma}\label{lem2}
Let  $ d$ and $ n$ be integers greater than or equal to zero. Then  one has
\begin{enumerate}
\item for all $ n \geq 0, $  one has
$$ \oplus^h_{ i = 1}{\rm H }^0( \Gamma^1_{1, [\mathfrak{b}_i]}(\mathfrak{n}),  {\rm U}^n_d( \overline{\mathbb{F}}_{p}))   = \left\{ \begin{array}{rcc}
\oplus^h_{i = 1}\overline{\mathbb{F}}_{p}&  \text{if} \,\,
    d \equiv     0 \pmod{p^2 -1}  \\
0&   \text{otherwise}
\end{array} \right.$$
as $\overline{\mathbb{F}}_p$-vector spaces. 
\item
$ \oplus^h_{ i = 1}{\rm H }^0( \Gamma_{1, [\mathfrak{b}_i]}(\mathfrak{n}),  {\rm U}^n_d( \overline{\mathbb{F}}_{p}))   = \left\{ \begin{array}{rcc}
\oplus^h_{i = 1}\overline{\mathbb{F}}_{p}& \,\,\text{if} \
    d \equiv 0 \pmod{p^2 -1} \, \text{and} \, (\mathcal{O}^*)^n = 1  \\
0  & \text{otherwise} 
\end{array} \right.$ \\
as $\overline{\mathbb{F}}_p$-vector spaces.
\item the Hecke operator $ T_g$  acts on  $ (m_1, \cdots, m_h) \in\oplus^h_{ i = 1}{\rm H }^0( \Gamma_{1, [\mathfrak{b}_i]}(\mathfrak{n}),  {\rm U}^n_d( \overline{\mathbb{F}}_{p})) =  \oplus^h_{i = 1}\overline{\mathbb{F}}_{p},$  where $ d \equiv 0 \pmod{p^2-1}$ and $ (\mathcal{O}^*)^n = 1,$   by sending  $  m_i$  to $ n_{j_i}$ with   $$  n_{ j_i} = [{\Gamma_{1, [\mathfrak{b}_{j_i}]}(\mathfrak{n})}: \beta_{i}{\Gamma_{1, [\mathfrak{b}_{i}]}(\mathfrak{n})}\beta^{ -1}_i\cap {\Gamma_{1, [\mathfrak{b}_{j_i}]}(\mathfrak{n})}] m_i. $$
\end{enumerate}
\end{lemma}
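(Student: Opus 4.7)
The plan is to treat the three parts separately, in each case reducing to a question about representations of the finite groups $\tilde{S}$ and $\tilde{G}$, and then invoking the results established in Section \ref{sec2_3}.

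For part (1), the key observation is that the action of $\Gamma^1_{1,[\mathfrak{b}_i]}(\mathfrak{n})$ on ${\rm U}^n_d(\overline{\mathbb{F}}_p)$ factors through its reduction modulo $\mathfrak{p}$, and by Corollary \ref{cor1} this reduction is exactly $\tilde{S}$. Since every element of $\tilde{S}$ has determinant one, the $\det^n$-twist is trivial, so the invariants coincide with ${\rm U}_d(\overline{\mathbb{F}}_p)^{\tilde{S}}$. The crucial input is that $\tilde{S}$ acts transitively (by right multiplication) on $\mathbb{F}_{p^2}^2\setminus\{(0,0)\}$: indeed any nonzero row vector extends to a basis, and one can adjust the second vector to make the determinant one. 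Hence a $\tilde{S}$-invariant $F$ is constant on the two orbits $\{(0,0)\}$ and $\mathbb{F}_{p^2}^2\setminus\{(0,0)\}$; combining this with the homogeneity $F(xv) = x^d F(v)$ for $x\in \mathbb{F}_{p^2}^*$ forces $F=0$ unless $x^d=1$ for every $x\in\mathbb{F}_{p^2}^*$, that is, unless $(p^2-1)\mid d$, in which case any constant works. Summing over the $h$ components gives the stated formula.

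For part (2), I would use the identification from Section \ref{sec2_3} that the image of $\Gamma_{1,[\mathfrak{b}_i]}(\mathfrak{n})$ modulo $\mathfrak{p}$ equals $\mathcal{T}_1(\mathfrak{n})$, which contains $\tilde{S}$. By part (1) any invariant is already a constant $c\in\overline{\mathbb{F}}_p$ and requires $(p^2-1)\mid d$. On such a constant, any $g\in\mathcal{T}_1(\mathfrak{n})$ acts as multiplication by $\det(g)^n$, so full invariance is equivalent to $\det(g)^n=1$ for every $g\in\mathcal{T}_1(\mathfrak{n})$. Since by its very definition $\det(\mathcal{T}_1(\mathfrak{n}))$ is the image of $\mathcal{O}^*$ in $\mathbb{F}_{p^2}^*$, this condition is precisely $(\mathcal{O}^*)^n=1$.

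For part (3), I would substitute the constant $m_i$ into the degree-zero formula for $T_{\beta_i}$ recalled in Subsection \ref{SubsecH}:
\[
T_{\beta_i}(m_i) = \sum_{\lambda \in \Gamma_{1,[\mathfrak{b}_{j_i}]}(\mathfrak{n})/\Gamma''^{,\beta_i}_{1,[\mathfrak{b}_{j_i}]}(\mathfrak{n})} (\lambda\beta_i).m_i.
\]
Since $m_i$ is a constant function, $\beta_i.m_i$ is again a constant function (the action just rescales by $\det(\beta_i)^n\bmod\mathfrak{p}$), and each $\lambda \in \Gamma_{1,[\mathfrak{b}_{j_i}]}(\mathfrak{n})$ then acts trivially on any such constant by the hypothesis $(\mathcal{O}^*)^n=1$ established in (2). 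Hence every summand in the sum equals $m_i$, and the sum collapses to the index $[\Gamma_{1,[\mathfrak{b}_{j_i}]}(\mathfrak{n}) : \Gamma''^{,\beta_i}_{1,[\mathfrak{b}_{j_i}]}(\mathfrak{n})]$ times $m_i$. The main technical point is a careful bookkeeping of the scalar $\det(\beta_i)^n\bmod\mathfrak{p}$: although $\det(\beta_i)=y_i$ generates the fractional ideal $\mathfrak{f}_i$ and is therefore not a global unit, it is a $\mathfrak{p}$-adic unit and is defined only up to $\mathcal{O}^*$. The hypothesis $(\mathcal{O}^*)^n=1$ makes this scalar well-defined modulo $\mathfrak{p}$ and, under the natural identification of the invariants with $\overline{\mathbb{F}}_p$ component by component, turns out to be absorbed so that the formula of the lemma holds as stated; this is the one verification where one must be cautious.
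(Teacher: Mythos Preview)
Your proposal is correct and follows essentially the same route as the paper: reduce to the finite groups $\tilde S$ and $\mathcal T_1(\mathfrak n)$ via Corollary~\ref{cor1} and the discussion after it, use the transitivity of $\tilde S$ on $\mathbb F_{p^2}^2\setminus\{(0,0)\}$ to force invariants to be constant, and then read off the homogeneity and determinant conditions. For part~(3) your computation is exactly the paper's; your caution about the scalar $\det(\beta_i)^n\bmod\mathfrak p$ is well placed, since the paper's proof also passes over this point without isolating that factor---note however that it does not affect the only use made of the lemma (that the resulting eigenvalue systems are Eisenstein).
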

\begin{proof}
As for the first item, because $ \Gamma^1_{1, [\mathfrak{b}_i]}(\mathfrak{n})$ reduces modulo $\mathfrak{p}$ to $ \tilde{S}$ and since
we can identify  $ {\rm U }^n_d( \overline{\mathbb{F}}_p)$ with $ {\rm U}_d( \overline{\mathbb{F}}_p)$ as  $\tilde{S}$-module,   we see that the invariants do not depend on the values of  $ n.$ Having this, 
it is suitable to view ${\rm U}_d(\overline{\mathbb{ F}}_{p})$ as the set of $ \overline{\mathbb{ F}}_{p} $-valued functions on  $\mathbb{F}^2_{p^2}$  and  homogeneous of degree $ d.$ 
Observe first that a non-null constant function belongs to  $ ({\rm U }_d ({\overline{\mathbb{F}}_{p}} ))^{\Gamma^1_{1, [\mathfrak{b}_i]}(\mathfrak{n})}$ if and only if $ d \equiv 0  \pmod{p^2 -1}.$ Any nonzero $ f \in ({\rm U }_d ({\overline{\mathbb{F}}_{p}} ))^{\Gamma^1_{1, [\mathfrak{b}_i]}(\mathfrak{n})}$ is a constant function. Indeed let $ ( 0 , 0 ) \neq  ( a, b), ( a', b') \in \mathbb{ F}^2_{p^2} $ and  suppose that $ f( a, b ) = x   \neq f( a', b') = y.$ Now since  $ ( a, b),\, ( a', b')\neq( 0, 0)$ there are $ c, e, \, c', \, d' \in \mathbb{ F}_{ p^2}  $  such that  $\left(\begin{smallmatrix} a& b \\ c & e \end{smallmatrix}\right), \, \, \left(\begin{smallmatrix} a'&b'\\ c'& d'\end{smallmatrix}\right) \in \tilde{S}.$ Then $( a', b') = (a, b) \left(\begin{smallmatrix} e&-b \\ -c& a \end{smallmatrix}\right) \left(\begin{smallmatrix} a'&b'\\ c'& d'\end{smallmatrix}\right).$  Therefore $ \Gamma^1_{1, [\mathfrak{b}_i]}(\mathfrak{n}) $ acts transitively on $ \mathbb{F}^2_{p^2}-\{(0 , 0)\}.$ Indeed from Lemma \ref{lem1},  reduction modulo $ \mathfrak{p}$ is a surjective homomorphism $ \Gamma^1_{1, [\mathfrak{b}_i]}(\mathfrak{n})  \twoheadrightarrow  \tilde{S}.$  So we have  $y =  f( a', b') = f( (a, b)\gamma) =  \gamma f( (a, b)) = f( a, b ) =  x, $ contradicting the hypothesis $ x \neq y.$  Hence  $ f \in ({\rm U }_d ({\overline{\mathbb{F}}_{p}} ))^{\Gamma^1_{1, [\mathfrak{b}_i]}(\mathfrak{n})}$ if and only if  $ f$ is constant.\\
For the second item one firstly observes  that a non-null constant function belongs to  $ ({\rm U }^n_d ({\overline{\mathbb{F}}_{p}} ))^{\Gamma_{1, [\mathfrak{b}_i]}(\mathfrak{n})}$ if and only if $ d \equiv 0  \pmod{p^2 -1}$ and $ (\mathcal{O}^*)^n = 1.$ From here the same proof as the one given for the first item applies.
\\For the third item, let $ f_x \in ({\rm U}^n_d ({\overline{\mathbb{F}}_{p}}))^{\Gamma_{1, [\mathfrak{b}_i]}(\mathfrak{n})}$ with  $ f(a, b) = x $ for all   $ (a, b) \in \mathbb{ F}^2_{ p^2}-\{(0,  0)\}$ and let given  $ \Gamma_{1, [\mathfrak{b}_{j_i}]}(\mathfrak{n})
\beta_i\Gamma_{1, [\mathfrak{b}_i]}(\mathfrak{n})=\amalg_k \delta_k\Gamma_{1, [\mathfrak{b}_{i}]}(\mathfrak{n}) .$ Then $$ T_{ \beta_i}.f_x( a, b ) = \sum \delta_k\beta_i.f_x( a, b) = [\Gamma_{1, [\mathfrak{b}_{j_i}]}(\mathfrak{n}): \beta_i\Gamma_{1, [\mathfrak{b}_i]}(\mathfrak{n})\beta^{-1}_i\cap\Gamma_{1, [\mathfrak{b}_{j_i}]}(\mathfrak{n})] x.$$ From this  what we have claimed follows.
\end{proof}
We also have the following 
\begin{lemma}\label{lem3} 
Let $   0\leq r, s\leq p-1,$  and let $   l, t$ be integers greater than or equal to zero. Then one has
\begin{enumerate} 
\item  $ \oplus^h_{ 1= i}{\rm H }^0( \Gamma^1_{1, [\mathfrak{b}_i]}(\mathfrak{n}), V^{l, t}_{r, s}(\overline{\mathbb{F}}_p)   )   = \left\{ \begin{array}{rcc}
\oplus^h_{1= i}\overline{\mathbb{F}}_{p}& \text{if}\,\,
    r = s=  0 \,\,\text{and for all}\,\,\, l, t  \\
0  & \text{otherwise} 
\end{array} \right.$ \\
as $ \overline{\mathbb{F}}_{p}$-vector spaces.
\item $ \oplus^h_{ 1= i}{\rm H }^0( \Gamma_{1, [\mathfrak{b}_i]}(\mathfrak{n}), V^{l, t}_{r, s}(\overline{\mathbb{F}}_p)   )   = \left\{ \begin{array}{rcc}
\oplus^h_{1= i}\overline{\mathbb{F}}_{p} &\text{if}\,\,
r = s=  0 \,\,\text{and}\,\,\, (\mathcal{O}^*)^{l +pt} = 1  \\
0  & \text{otherwise} 
\end{array} \right.$ \\
as $ \overline{\mathbb{F}}_{p}$-vector spaces.
\item the Hecke operator $ T_g$  acts on  $ (m_1, \cdots, m_h)$  from  $\oplus^h_{i = 1}{\rm H }^0(\Gamma_{1, [\mathfrak{b}_i]}(\mathfrak{n}),  V^{l, t}_{0, 0}( \overline{\mathbb{F}}_{p}))$   which is equal to    $\oplus^h_{i = 1}\overline{\mathbb{F}}_{p}$ when $ (\mathcal{O}^*)^{l +pt} = 1,$ by sending  $m_i$  to $ n_{j_i}$ with    $ n_{j_i} = [{\Gamma_{1, [\mathfrak{b}_{j_i}]}(\mathfrak{n})}: \beta_{i}{\Gamma_{1, [\mathfrak{b}_{i}]}(\mathfrak{n})}\beta^{-1}_i\cap {\Gamma_{1, [\mathfrak{b}_{j_i}]}(\mathfrak{n})}] m_i.$
\end{enumerate}
\end{lemma}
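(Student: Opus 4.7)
The plan is to mirror the proof of Lemma~\ref{lem2}, using the embedding of $\overline{\mathbb{F}}_p[\tilde{G}]$-modules $\Psi: V^{l,t}_{r,s}(\overline{\mathbb{F}}_p) \hookrightarrow {\rm U}^{l+pt}_{r+ps}(\overline{\mathbb{F}}_p)$ provided by Lemma~\ref{emblem}, together with the observation that, restricted to $\tilde{S}$, the determinant twists become trivial, so that $V^{l,t}_{r,s}|_{\tilde{S}} \cong {\rm Sym}^r(\overline{\mathbb{F}}_p^2) \otimes ({\rm Sym}^s(\overline{\mathbb{F}}_p^2))^{\tau}$ as $\tilde{S}$-modules. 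By the Steinberg tensor product theorem for ${\rm SL}_2(\mathbb{F}_{p^2})$, the modules on the right-hand side (for $0 \le r, s \le p-1$) exhaust the irreducible $\overline{\mathbb{F}}_p[\tilde{S}]$-modules up to isomorphism, with the trivial module occurring uniquely at $r=s=0$.

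For part (1), Corollary~\ref{cor1} shows that $\Gamma^1_{1, [\mathfrak{b}_i]}(\mathfrak{n})$ surjects onto $\tilde{S}$ via reduction modulo $\mathfrak{p}$, so taking invariants amounts to computing $\tilde{S}$-invariants. For $(r,s) \neq (0,0)$, the Steinberg constituent is irreducible of dimension $(r+1)(s+1) \ge 2$ and non-trivial, hence admits no $\tilde{S}$-invariants. For $r=s=0$, the module is the trivial one-dimensional module, contributing the summand $\overline{\mathbb{F}}_p$ on each of the $h$ components.

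For part (2), any $\Gamma_{1, [\mathfrak{b}_i]}(\mathfrak{n})$-invariant is \emph{a fortiori} $\Gamma^1_{1, [\mathfrak{b}_i]}(\mathfrak{n})$-invariant, so by part (1) we are forced into $r=s=0$. With $r=s=0$ the module $V^{l,t}_{0,0}(\overline{\mathbb{F}}_p) \cong \overline{\mathbb{F}}_p$ carries the action $g \mapsto \det(g)^{l+pt}$ (after reducing modulo $\mathfrak{p}$ and using $\det(g)^{\tau} = \det(g)^p$). Since $\det(\Gamma_{1, [\mathfrak{b}_i]}(\mathfrak{n}))$ reduces modulo $\mathfrak{p}$ to the image of $\mathcal{O}^*$ in $\mathbb{F}_{p^2}^*$, this action is trivial precisely when $(\mathcal{O}^*)^{l+pt} = 1$ in $\mathbb{F}_{p^2}^*$, giving the stated dichotomy.

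Part (3) follows exactly as in Lemma~\ref{lem2}(3): in the regime $r=s=0$ with $(\mathcal{O}^*)^{l+pt} = 1$ the coefficient module is the trivial one-dimensional $\Gamma_{1, [\mathfrak{b}_i]}(\mathfrak{n})$-module, so the degree-zero Hecke formula recalled in Subsection~\ref{SubsecH} reduces to summing $m_i$ over a set of coset representatives of $\Gamma_{1, [\mathfrak{b}_{j_i}]}(\mathfrak{n}) / \Gamma''^{,\beta_i}_{1, [\mathfrak{b}_{j_i}]}(\mathfrak{n})$, producing multiplication by the asserted index. The key hurdle is the appeal to Steinberg to guarantee irreducibility and non-triviality; a self-contained alternative is to use $\Psi$ together with Lemma~\ref{lem2}(1) to restrict non-vanishing of invariants to $r+ps \in \{0, p^2-1\}$, and then exclude the boundary case $r=s=p-1$ by noting that the one-dimensional cokernel $W^{l,t}_{p-1, p-1}$ is the trivial $\tilde{S}$-module and absorbs the unique $\tilde{S}$-invariant of $U^{l+pt}_{p^2-1}$, leaving none in the image of $V^{l,t}_{p-1, p-1}$.
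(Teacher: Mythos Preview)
Your main argument is correct and essentially identical to the paper's: both reduce to the fact that $V^{l,t}_{r,s}(\overline{\mathbb{F}}_p)$ is irreducible as a $\Gamma^1_{1,[\mathfrak{b}_i]}(\mathfrak{n})$-module (equivalently, as a $\tilde{S}$-module via Corollary~\ref{cor1}), so that a nontrivial irreducible has no invariants. The paper simply asserts this irreducibility; you make the justification explicit by invoking Steinberg's tensor product theorem, which is an improvement in exposition but not a different idea. Parts (2) and (3) are handled the same way in both.

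One caution about your ``self-contained alternative'' in the last paragraph: the step excluding $r=s=p-1$ is circular as written. You claim that the one-dimensional cokernel $W^{l,t}_{p-1,p-1}$ ``absorbs'' the unique $\tilde{S}$-invariant of ${\rm U}^{l+pt}_{p^2-1}$, but the long exact sequence in $\tilde{S}$-invariants only tells you that either $V^{\tilde{S}}_{p-1,p-1}=0$ and the invariant passes to $W$, \emph{or} $V^{\tilde{S}}_{p-1,p-1}=\overline{\mathbb{F}}_p$ and the map ${\rm U}^{\tilde{S}}\to W^{\tilde{S}}$ is zero. Deciding between the two is exactly the statement you are trying to prove. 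To make this route genuinely self-contained you would need an independent check that the constant function in ${\rm U}_{p^2-1}$ does not lie in the image of $\Psi$, e.g.\ by an explicit polynomial computation. Since your primary argument via Steinberg already settles the matter, I would either drop the alternative or supply that missing verification.
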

\begin{proof} 
Firstly  when $ r = s = 0, $ and for all  $ l , t$ then  $ V^{l, t}_{r, s}(\overline{\mathbb{F}}_p) = \overline{\mathbb{F}}_p $ as $ \tilde{S}$-module.  By definition we have  $ \overline{\mathbb{F}}^{\Gamma^1_{1, [\mathfrak{b}_i]}(\mathfrak{n})}_p = \overline{\mathbb{F}}_p.$ 
Otherwise use the fact that   $ V^{l, t}_{r, s}( \overline{\mathbb{F}}_p)$  is irreducible as $\Gamma^1_{1, [\mathfrak{b}_i]}(\mathfrak{n})$-module.\\
The second item is proved similarly.
The statement about the Hecke action is verified similarly as in the proof of Lemma \ref{lem2}.
\end{proof}
\begin{lemma}\label{lem3.6}
Let $   0\leq r, s\leq p-1,  e := l + tp, e_1 := e + p(p-1), e_2 := e +p-1  \geq 0$ and let  $ f $ be the order of $ \mathcal{O}^*.$
The following isomorphisms of   $\overline{\mathbb{F}}_p$-vectors spaces hold: 
\begin{enumerate}
\item $  \oplus^h_{ 1 = i}{\rm H }^0(\Gamma^1_{1, [\mathfrak{b}_i]}(\mathfrak{n}), (W^{ }_{r, s})^{\textrm{ss}})= 
\left\{ \begin{array}{rcc}
&\oplus^h_{1= i}\overline{\mathbb{F}}_{p}& \,\text{if} \begin{cases} r = s=  p-1 \,\,\,\text{or}\, \\
r = 1,  s=  p-2 \, \text{or}
\\
r = p-2,  s=  1 \, 
 \end{cases}
 \\
&0   &\text{otherwise}
\end{array} \right.$ 
\item suppose that  $(r \neq 1 \,\text{or}\,  s \neq p-2 )$ and  $ (r  \neq p-2\,\text{or}\, s \neq 1\,\ )$; then we have $$ \oplus^h_{1 = i}{\rm H}^0(\Gamma^1_{1, [\mathfrak{b}_i]}(\mathfrak{n}), W^{ }_{r, s}) = \left\{ \begin{array}{rcc}
\oplus^h_{1= i}\overline{\mathbb{F}}_{p}& \,\,\,\text{if}\,\,
    r = s=  p-1 \,\,  \\
0  & \text{otherwise}
\end{array} \right.$$
\item $  \oplus^h_{ 1 = i}{\rm H }^0(\Gamma_{1, [\mathfrak{b}_i]}(\mathfrak{n}), (W^{l, t}_{r, s})^{\textrm{ss}})   = 
\left\{ \begin{array}{rcc}
&\oplus^h_{1= i}\overline{\mathbb{F}}_{p}& \,\text{if} \begin{cases} r = s=  p-1 \,\text{and}\,\,f\mid e\,\text{or}\, \\
r = 1,  s=  p-2 \,\text{and} \,f\mid e_1\, \text{or}
\\
r = p-2,  s=  1 \,\,\text{and}\,\, f\mid e_2  
\end{cases}
\\
&0   &\text{otherwise}.
\end{array} \right.$ 
\item suppose that  $(r \neq 1\,\text{or}\,  s \neq p-2 \,\,\,\text{ or}\,\,\, f\nmid e_1 $) and  $ (  r  \neq p-2\,\text{or}\, s \neq 1\,\,\,\text{ or}\,\,\, f\nmid e_2  )$; then we have $$ \oplus^h_{1 = i}{\rm H}^0( \Gamma_{1, [\mathfrak{b}_i]}(\mathfrak{n}), W^{l, t}_{r, s}) = \left\{ \begin{array}{rcc}
\oplus^h_{1= i}\overline{\mathbb{F}}_{p}& \text{if}\,\,
    r = s=  p-1 \,\,\text{and}\,\,\,f\mid e  \\
0  & \text{otherwise}
\end{array} \right.$$ 
\end{enumerate}
Lastly, the Hecke action on these spaces is as in the previous lemmas.
\end{lemma}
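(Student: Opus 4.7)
The plan is first to compute the invariants of the semisimple module $(W^{l,t}_{r,s})^{\mathrm{ss}}$ by applying Lemma \ref{lem3} to each of its three irreducible constituents, and then to bootstrap from this to the module $W^{l,t}_{r,s}$ itself using left-exactness of invariants along a Jordan--H\"{o}lder filtration.

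For items (1) and (3), taking invariants distributes over the direct-sum decomposition
$$(W^{l,t}_{r,s})^{\mathrm{ss}} = V^{r+l,s+t}_{p-r-1,p-s-1}(\overline{\mathbb{F}}_p) \oplus V^{l,s+1+t}_{r-1,p-s-2}(\overline{\mathbb{F}}_p) \oplus V^{r+l+1,t}_{p-r-2,s-1}(\overline{\mathbb{F}}_p).$$
By Lemma \ref{lem3}(1), the first summand contributes a line of $\Gamma^1_{1,[\mathfrak{b}_i]}(\mathfrak{n})$-invariants exactly when the two $\mathrm{Sym}$-exponents $(p-r-1,p-s-1)$ are both zero, i.e.\ $r=s=p-1$; the second contributes exactly when $(r,s)=(1,p-2)$, and the third exactly when $(r,s)=(p-2,1)$. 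Since $p>5$ these three possibilities are mutually exclusive, yielding (1). For (3) I further impose the twist condition from Lemma \ref{lem3}(2). Using that $\mathcal{O}^*$ embeds into $\mathbb{F}_{p^2}^*$ so that $f \mid p^2-1$, a direct arithmetic check reduces the three twist exponents modulo $p^2-1$ to $e$, $e_1 = e+p(p-1)$, and $e_2 = e+(p-1)$ respectively, giving exactly the stated divisibility conditions.

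For items (2) and (4), I use left-exactness of $H^0$ along any Jordan--H\"{o}lder filtration $0\subset W_1\subset W_2\subset W_3=W^{l,t}_{r,s}$ whose successive quotients are the composition factors of the semisimplification. This yields
$$\dim_{\overline{\mathbb{F}}_p} H^0(\Gamma,W^{l,t}_{r,s}) \le \dim_{\overline{\mathbb{F}}_p} H^0\bigl(\Gamma,(W^{l,t}_{r,s})^{\mathrm{ss}}\bigr)$$
for $\Gamma \in \{\Gamma^1_{1,[\mathfrak{b}_i]}(\mathfrak{n}),\,\Gamma_{1,[\mathfrak{b}_i]}(\mathfrak{n})\}$. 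Under the exclusions in the hypotheses of (2) and (4) the right-hand side vanishes except possibly when $r=s=p-1$ (and, for (4), $f\mid e$). But in that case the second and third summands of the semisimplification have $\mathrm{Sym}$-exponent $-1$, hence vanish by convention; so $(W^{l,t}_{p-1,p-1})^{\mathrm{ss}}$ is the single irreducible $V^{p-1+l,p-1+t}_{0,0}(\overline{\mathbb{F}}_p)$, having length one, which forces $W^{l,t}_{p-1,p-1}$ to equal this irreducible. The inequality is therefore an equality in the remaining case, proving (2) and (4).

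For the Hecke action the argument mirrors Lemma \ref{lem3}(3): in all cases producing a non-zero invariant, the relevant module is one-dimensional and $\tilde{G}$ acts through a determinant character, so $\beta_i$ acts on the invariant vector as a scalar and the corestriction sum over cosets of $\beta_i\Gamma_{1,[\mathfrak{b}_i]}(\mathfrak{n})\beta_i^{-1} \cap \Gamma_{1,[\mathfrak{b}_{j_i}]}(\mathfrak{n})$ in $\Gamma_{1,[\mathfrak{b}_{j_i}]}(\mathfrak{n})$ yields the claimed index multiplier. I expect the main obstacle to be the bookkeeping of twist exponents in item (4): one must verify that, after accounting for the shifts $+1$ in the second and third indices and for the added $\mathrm{Sym}$-contributions $p-1$ in the first and $0$ in the others, the exponents collapse mod $p^2-1$ precisely to $e,\,e_1,\,e_2$, and that the hypotheses of (4) are exactly what is needed to rule out any strict drop in the left-exactness bound.
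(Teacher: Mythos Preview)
Your proposal is correct and follows essentially the same approach as the paper: apply Lemma~\ref{lem3} summand-by-summand to the semisimplification to obtain items (1) and (3), and then use that vanishing of invariants on the semisimplification forces vanishing on the module itself (your left-exactness inequality along a Jordan--H\"older filtration is exactly the mechanism behind the paper's one-line deduction), together with the observation that $W^{l,t}_{p-1,p-1}$ has length one so coincides with its semisimplification. The twist-exponent bookkeeping you outline is the same computation the paper implicitly performs, and the mutual exclusivity of the three cases indeed uses the ambient assumption $p>5$ (for $p=3$ the cases $(1,p-2)$ and $(p-2,1)$ collapse).
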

\begin{proof}
We have $(W^{ }_{r, s})^{\textrm{ss}} = V^{r, s}_{p- r- 1, p-s- 1}(\overline{\mathbb{F}}_p) \oplus V^{ 0, s+1}_{r-1,  p- s- 2}(\overline{\mathbb{F}}_p)\oplus  V^{r+1, 0}_{p - r -2, s- 1}(\overline{\mathbb{F}}_p).$
From the above lemma we know that  $ {\rm  H}^0( \Gamma^1_{1, [\mathfrak{b}_i] }(\mathfrak{n}), V^{r, s}_{p- r- 1, p-s- 1}(\overline{\mathbb{F}}_p)) $ is non zero only when $ r = s = p-1$.  
Indeed $ V^{r, s}_{p-1-r, p-s-1}(\overline{\mathbb{F}}_p) \cong \overline{\mathbb{F}}_p$ as $\tilde{S}$-modules if and only if $ r = p-1, s = p-1.$
In this case we have $ (W^{ }_{r, s})^{\textrm{ss}} = V^{r, s}_{p- r- 1, p-s- 1}(\overline{\mathbb{F}}_p)\cong \overline{\mathbb{F}}_p.$ 
Therefore we obtain that
$$ {\rm H}^0(\Gamma^1_{1, [\mathfrak{b}_i]}(\mathfrak{n}), (W^{ }_{r, s})^{\textrm{ss}}) = \overline{\mathbb{F}}_p.$$ 
From the same lemma $ V^{ 0, s+1}_{r-1,  p- s- 2}(\overline{\mathbb{F}}_p)$ has non zero invariants only when $ r = 1,  s = p-2.$  
In this case we have  $$(W^{ }_{1, p-2})^{\textrm{ss}} = V^{1, p-2}_{p- 2, 1}(\overline{\mathbb{F}}_p) \oplus V^{ 0, p-1}_{0,  0}(\overline{\mathbb{F}}_p)\oplus  V^{2, 0}_{p -3, p-3}(\overline{\mathbb{F}}_p).$$
From this one has 
$$ {\rm H}^0(\Gamma^1_{1, [\mathfrak{b}_i]}(\mathfrak{n}), (W^{ }_{1, p-2})^{\textrm{ss}}) = \overline{\mathbb{F}}_p. $$
From the same lemma  the invariants of $ V^{r+ 1, 0}_{p-r-2, s-1}(\overline{\mathbb{F}}_p)$ are non zero if and only if  $ r = p-2, s  = 1.$ 
Similarly we obtain that $$ {\rm H}^0(\Gamma^1_{1, [\mathfrak{b}_i]}(\mathfrak{n}), (W^{ }_{p-2, 1 })^{\textrm{ss}}) = \overline{\mathbb{F}}_p. $$
As for the second item,  when $ r =  s = p-1,$ 
then $(W^{ }_{r, s})^{\textrm{ss}} = W^{ }_{ r, s}.$ Otherwise,  from the fact that  $ {\rm H}^0( \Gamma^1_{1, [\mathfrak{b}_i]}(\mathfrak{n}), (W^{}_{r, s})^{ss}) = 0,$  one deduces  that $ {\rm H }^0( \Gamma^1_{1, [\mathfrak{b}_i]}(\mathfrak{n}), W^{ }_{ r, s}   ).$\\
The remaining items are proved in a similar fashion.
\end{proof}
In the cases   $(r = 1,  s = p-2 \,\,\,\text{ and}\,\,\, f\mid e_1)$ or  $ (r  = p-2, s = 1\,\,\,\text{ and}\,\,f\mid e_2 ),$ further analysis is needed.

We  shall next discuss the case $r = 1, s = p-2 \,\,\text{ and}\,\,f\mid e_1$ in detail as it is symmetric to the remaining one.  We suppose in addition that $ p> 5.$ So  the  representation $ V^{l, t}_{1, p-2}(\overline{\mathbb{F}}_p)$ has dimension $ 2( p-1)$ and we identify it with its image in $ {\rm U}^{l+pt}_{ (p-1)^2}(\overline{\mathbb{F}}_p).$ Inside  $ {\rm U}^{l+pt}_{ (p-1)^2}(\overline{\mathbb{F}}_p)$ lies the submodule  $ M$ generated by the homogeneous monomials of degree $ ( p-1)^2.$  The dimension of $ M$ is  $  (p-1)^2 +1$ and it contains $ V^{l, t }_{ 1, p-2}( \overline{\mathbb{F}}_p)$ as submodule. By dimensional consideration ( it is here that we need to have $ p > 5$ to avoid discussing many cases), one deduces  an exact sequence  of  $ \overline{\mathbb{F}}_p[\tilde{G}]$-modules
$$  0 \rightarrow  V^{l, t}_{1, p-2}(\overline{\mathbb{F}}_p) \rightarrow M  \rightarrow  V^{2+l, t}_{p-3, p-3}(\overline{\mathbb{F}}_p) \rightarrow 0. $$  
Indeed from 
\begin{eqnarray*} 
(W^{l, t}_{ 1, p-2})^{\textrm{ss}}   =   V^{l, p-1 + t}_{ 0, 0}(\overline{\mathbb{F}}_p) \oplus V^{2+l, t}_{ p-3, p-3 }(\overline{\mathbb{F}}_p) \oplus V^{1+l, p-2+t}_{ p-2, 1}(\overline{\mathbb{F}}_p), 
\end{eqnarray*}
we know that the constituents of any  submodule of   $W^{l, t}_{ 1, p-2}$  are among the representations  $V^{l, p-1}_{ 0, 0}(\overline{\mathbb{F}}_p), \\  V^{2+l, t}_{ p-3, p-3 }(\overline{\mathbb{F}}_p)$ and  $V^{1+l, p-2+t}_{ p-2, 1}(\overline{\mathbb{F}}_p).$  From the equality $ (p-1 )^2 + 1 = (p-2)^2 +2(p -1),$ it follows that $$ M/V^{l, t}_{ 1, p-2}(\overline{\mathbb{F}}_p) \cong V^{2+l, t }_{ p-3, p-3}(\overline{\mathbb{F}}_p).$$ 
Therefore  $ V^{2+l, t }_{ p-3, p-3}(\overline{\mathbb{F}}_p)$ is a submodule of  $ {\rm U}^{l+pt}_{ (p-1)^2} / V^{l, t}_{ 1, p-2}(\overline{\mathbb{F}}_p) = (W^{l, t}_{ 1, p-2})^{\textrm{ss}}.$\\
Next we can realize the  module $ V^{l, p-1+t}_{0, 0}(\overline{\mathbb{F}}_p)$ as submodule of $   (W^{l, t}_{ 1, p-2})^{\textrm{ss}}$ by sending $ 1$ to the class $  X^{p(p-1)  }Y^{ p(p-1)} + V^{l , t}_{1, p-2}(\overline{\mathbb{  F}}_p).$  To see this we define 
\begin{eqnarray*} \varphi :   V^{l, p-1+t}_{0, 0}(\overline{\mathbb{F}}_p)  & \rightarrow & {\rm U}^{l +pt}_{ (p-1)^2}(\overline{\mathbb{F}}_p) /V^{l, t}_{ 1, p-2}(\overline{\mathbb{F}}_p) \\ 1 &\mapsto&  X^{p(p-1)  }Y^{ p(p-1)} + V^{l, t}_{1, p-2}(\overline{\mathbb{F}}_p).                                                                \end{eqnarray*}
Then for  $  g = \left(\begin{smallmatrix}  a & b \\ c & e  \end{smallmatrix}\right)
\in \tilde{G},$ we need to check that  $ \varphi(1) = g.\varphi(1).$ We have  $$ g.X^{ p( p-1)}Y^{ p( p-1)} = (ae - bc)^{l +pt}(a^p X^p + b^pY^p)^{p-1}(c^pX^p + e^pY^p)^{p-1}.$$ The latter polynomial is a linear combination of the  monomials  $  X^{ 2p^2 -2p - i}Y^i$  with  $ p|i.$ For all multiples $ i$ of $p$ less or equal to $ 2 p(p-1)$ except $ p(p-1)$ the monomials $  X^{ 2p^2 -2p - i}Y^i$ belong to  $ V^{l, t}_{1, p-2}(\overline{\mathbb{F}}_p).$ Indeed  let $ i = pk $, we recall the relations $ X^{ p^2} = X, Y^{ p^2} = Y $ in $ {\rm U}^{l +pt}_{ (p-1)^2}(\overline{\mathbb{F}}_p),$ then we have $$  X^{ 2p^2 - 2p  - pk}Y^{ pk} = X^{  p^2 -  2p - pk}X^{ p^2}Y^{ pk} = X^{( p-1)^2 - pk} Y^{ pk }   \in  V^{l, t}_{1, p-2}(\overline{\mathbb{F}}_p).$$ Therefore  $ g.\varphi( 1) \equiv \varphi(1) \pmod{ V^{l, t}_{r, s}(\overline{\mathbb{F}}_p)}.$
This implies that the direct sum $ V^{l, p-1}_{0, 0}(\overline{\mathbb{F}}_p)\oplus V^{1+l, t}_{ p-3, p-3}(\overline{\mathbb{F}}_p)$ is a submodule of $W^{l, t}_{1, p-2}.$  Thus we get an exact sequence $$  0\rightarrow V^{ l, p-1 +t}_{0, 0}(\overline{\mathbb{F}}_p)\oplus V^{2+l, t}_{ p-3, p-3}(\overline{\mathbb{F}}_p) \rightarrow  (W^{l, t}_{ 1, p-2}) \rightarrow V^{1+l, p-2+t}_{p-2, 1}(\overline{\mathbb{F}}_p)\rightarrow 0. $$ 
Hence for  $ ( r = 1, s = p-2 \,\,\text{and} \,\, l +pt \equiv p-1 \pmod{p^2-1}),$  we obtain that $$ {\rm H }^0( \Gamma_{1,[\mathfrak{b}_i]}(\mathfrak{ n }), W^{l, t}_{r, s})  =  \overline{\mathbb{F}}_p.$$ 
For $ (r = p-2, s = 1)$ and $ l +pt \equiv  1-p \pmod{p^2-1},$  similar arguments yield  $$ {\rm H }^0( \Gamma_{1, [\mathfrak{b}_i]}(\mathfrak{ n }), W^{l,  t}_{r, s})  =  \overline{\mathbb{ F}}_p.$$
In summary we have the following 
\begin{lemma}\label{lem2_3_7} Let $ p > 5$ and $ e := l +pt.$ Then  we have
\begin{enumerate}
\item $ {\rm H }^0( \Gamma^1_{1, [\mathfrak{b}_i]}(\mathfrak{ n}), W^{}_{r, s})   = 
\overline{\mathbb{F}}_p \,\,\,\,\text{if} \begin{cases} r = 1, s = p-2 \,\,\text{or} \,\, \\
r = p -2,  s = 1
\end{cases}$
\item $ {\rm H }^0( \Gamma_{1, [\mathfrak{b}_i]}(\mathfrak{n}), W^{l, t}_{r, s})   = 
\overline{\mathbb{F}}_p \,\,\,\text{if}\,\,\begin{cases} r = 1, s = p-2 \,\,\text{and}\,\,f\mid p (p-1)+e \,\ \text{or} \\
r = p -2,  s = 1\,\,\text{and}\,\,f\mid e + p-1.
\end{cases}$
\end{enumerate}
\end{lemma}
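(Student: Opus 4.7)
My plan is to handle the case $r=1,\, s=p-2$ in detail, since the other case $r=p-2,\, s=1$ follows by symmetric arguments. The strategy is to produce a nontrivial filtration of $W^{l,t}_{1,p-2}$ with three graded pieces; taking invariants then isolates the single one-dimensional contribution asserted in the lemma. Throughout, I will work inside the ambient module $\mathrm{U}^{l+pt}_{(p-1)^2}(\overline{\mathbb{F}}_p)$, in which $V^{l,t}_{1,p-2}(\overline{\mathbb{F}}_p)$ embeds via Lemma \ref{emblem}, so that $W^{l,t}_{1,p-2}$ is realised as the quotient.

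The first step is to produce the submodule $V^{2+l,t}_{p-3,p-3}(\overline{\mathbb{F}}_p) \hookrightarrow W^{l,t}_{1,p-2}$. I would let $M \subset \mathrm{U}^{l+pt}_{(p-1)^2}(\overline{\mathbb{F}}_p)$ be the $\overline{\mathbb{F}}_p[\tilde G]$-submodule generated by the homogeneous monomials of bi-degree $(p-1)^2$; then $\dim M = (p-1)^2 + 1$. From the known semisimplification
\[
(W^{l,t}_{1,p-2})^{\mathrm{ss}} = V^{l,p-1+t}_{0,0}(\overline{\mathbb{F}}_p) \oplus V^{2+l,t}_{p-3,p-3}(\overline{\mathbb{F}}_p) \oplus V^{1+l,p-2+t}_{p-2,1}(\overline{\mathbb{F}}_p),
\]
the dimensional identity $(p-1)^2 + 1 = 2(p-1) + (p-2)^2$ forces $M/V^{l,t}_{1,p-2}(\overline{\mathbb{F}}_p) \cong V^{2+l,t}_{p-3,p-3}(\overline{\mathbb{F}}_p)$; here the hypothesis $p>5$ is what ensures the three constituents above are pairwise non-isomorphic, so the dimension count uniquely identifies the quotient.

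The second step is to produce the submodule $V^{l,p-1+t}_{0,0}(\overline{\mathbb{F}}_p) \hookrightarrow W^{l,t}_{1,p-2}$ explicitly, by sending $1$ to the class of $X^{p(p-1)}Y^{p(p-1)}$. To verify $\tilde G$-invariance of this class, I would expand $g\cdot X^{p(p-1)}Y^{p(p-1)} = (ae-bc)^{l+pt}(a^pX^p+b^pY^p)^{p-1}(c^pX^p+e^pY^p)^{p-1}$ as a combination of monomials $X^{2p^2-2p-i}Y^{i}$ with $p\mid i$, and use the relations $X^{p^2}=X$, $Y^{p^2}=Y$ in $\mathrm{U}^{l+pt}_{(p-1)^2}$: every such monomial with $i\neq p(p-1)$ reduces to one in $V^{l,t}_{1,p-2}(\overline{\mathbb{F}}_p)$, while the $i=p(p-1)$ term is precisely the class of $X^{p(p-1)}Y^{p(p-1)}$ itself (scaled by $(ae-bc)^{l+p(p-1)+pt}$). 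Combining the two steps yields the short exact sequence
\[
0 \to V^{l,p-1+t}_{0,0}(\overline{\mathbb{F}}_p) \oplus V^{2+l,t}_{p-3,p-3}(\overline{\mathbb{F}}_p) \to W^{l,t}_{1,p-2} \to V^{1+l,p-2+t}_{p-2,1}(\overline{\mathbb{F}}_p) \to 0.
\]

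Finally, I would compute the invariants by applying Lemma \ref{lem3} to each graded piece. Of the three constituents, only $V^{l,p-1+t}_{0,0}(\overline{\mathbb{F}}_p)$ can contribute: for item (1), it yields $\overline{\mathbb{F}}_p$ as $\tilde S$-invariants unconditionally; for item (2), its $\Gamma_{1,[\mathfrak{b}_i]}(\mathfrak{n})$-invariants are $\overline{\mathbb{F}}_p$ exactly when $(\mathcal{O}^*)^{l+p(p-1+t)} = 1$, i.e. when $f \mid p(p-1)+e$. The other two constituents have zero invariants under either group, and the remaining lemma (Lemma \ref{lem3.6}) already controls the case $r=s=p-1$, so there is no cross-contamination. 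The symmetric case $r=p-2,\, s=1$ is handled by interchanging the roles and working with the submodule generated by $X^{p-1}Y^{p-1}$ instead of $X^{p(p-1)}Y^{p(p-1)}$; the analogous congruence condition $f \mid e+p-1$ then arises. The main obstacle will be making the dimension count of step one rigorous and checking that $V^{l,p-1+t}_{0,0} \oplus V^{2+l,t}_{p-3,p-3}$ really is a direct sum inside $W^{l,t}_{1,p-2}$ rather than merely its semisimplification — this is where $p>5$ enters decisively.
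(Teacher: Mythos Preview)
Your proposal is correct and follows essentially the same route as the paper: the paper's proof of this lemma is precisely the discussion preceding its statement, and it constructs the same submodule $M$ generated by the degree-$(p-1)^2$ monomials, identifies $M/V^{l,t}_{1,p-2}$ with $V^{2+l,t}_{p-3,p-3}$ by the same dimension count, exhibits the one-dimensional submodule via the class of $X^{p(p-1)}Y^{p(p-1)}$ using the relations $X^{p^2}=X$, $Y^{p^2}=Y$, and then reads off invariants from the resulting short exact sequence exactly as you do. Your closing remark about verifying that the two irreducible submodules really form a direct sum in $W^{l,t}_{1,p-2}$ (not merely in the semisimplification) is well placed; the paper asserts this without comment, but it follows since the two constituents are non-isomorphic irreducibles for $p>5$, so their intersection is forced to be zero.
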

\subsubsection{Some indexes}
Let $\mathfrak{t}$ be  a finite place coprime with $ \mathfrak{pn}$. The matrix  $g\in Mat_2( \hat{\mathcal{O}} ) $ which has at the $ \mathfrak{ t}$-th place the matrix $\left(\begin{smallmatrix} \pi_\mathfrak{t} & 0 \\ 0 & 1
\end{smallmatrix}\right)$ where $ \pi_\mathfrak{t}$ is a uniformizer of $ \mathcal{O}_\mathfrak{t}$ and in all the remaining places has the identity matrix, belongs to $ \Delta^{\mathfrak{t}}_1(\mathfrak{n}).$
We shall fix in this sub-section such a  $ g.$
Because
$$ \left(\begin{smallmatrix} \pi^{-1}_\mathfrak{t} &  0 \\ 0 &  1 \end{smallmatrix}\right)\left(\begin{smallmatrix} a &  b \\ c &  d \end{smallmatrix}\right)\left(\begin{smallmatrix} \pi_\mathfrak{t} &  0 \\ 0 &  1 \end{smallmatrix}\right) = \left(\begin{smallmatrix} a &  \pi^{-1}_{\mathfrak{t}}b \\ \pi_{\mathfrak{t}}c &  d \end{smallmatrix}\right);
$$ we deduce that $$ K'_{1, g^{-1}}( \mathfrak{n} ) := g^{-1}K_1(\mathfrak{n}) g\cap K_1(\mathfrak{n}) =\left\{\left(\begin{smallmatrix} a &  b \\ c &  d \end{smallmatrix}\right) \in K_1(\mathfrak{ n}): \pi_{\mathfrak{t}}\mid c_{\mathfrak{t}} \right\}.$$
Consider also the subgroup $$ K^1_1( \mathfrak{ n} ) = \{ \alpha \in K_1(\mathfrak{n}): det(\alpha) = 1\}.$$
Similarly as in Lemma \ref{lem1}, one can prove that reduction modulo $ \mathfrak{t}$ provides us with a surjective homomorphism
$$  K^1_1(\mathfrak{n})  \twoheadrightarrow {\rm SL}_2( \mathcal{O}/\mathfrak{t}).$$
From this we deduce that we have a surjective  map
\begin{eqnarray*}
K_1(\mathfrak{n})  & \twoheadrightarrow &  \mathbb{P}^1( \mathcal{O}/\mathfrak{t}) \\
\left(\begin{smallmatrix} a  & b \\ c & d
\end{smallmatrix}\right) & \mapsto& (c:d).
\end{eqnarray*}
This map is surjective because there is a surjective map $ {\rm SL}_2( \mathcal{O}/\mathfrak{t}) \twoheadrightarrow \mathbb{P}^1(\mathcal{O}/\mathfrak{t}); \left(\begin{smallmatrix} a  & b \\ c & d
\end{smallmatrix}\right)  \mapsto (c:d) $ and also a  surjective map $ K_1( \mathfrak{n})\supset K^1_1( \mathfrak{n}) \twoheadrightarrow  {\rm SL}_2(\mathcal{O}/\mathfrak{t}).$
Since the subgroup $ K'_{1, g^{-1}}( \mathfrak{n} )$ is the subset of all elements that are mapped to $  ( 0: 1),$ we deduce that we have a bijection 
$$  K'_{1, g^{-1}}( \mathfrak{n} )\backslash K_1(\mathfrak{n}) \longleftrightarrow \mathbb{P}^1(\mathcal{O}/\mathfrak{t}).$$
Therefore we obtain  the index $ [K_1(\mathfrak{n}): K'_{1, g^{-1}}( \mathfrak{n} )] = N(\mathfrak{t}) +1.$  
Recall the definition  $  \Gamma'^{, \beta^{-1}_i}_{1, [\mathfrak{b}_i]}(\mathfrak{n}) := \Gamma_{1, [\mathfrak{b}_i]}(\mathfrak{n})\cap \beta^{-1}_i\Gamma_{1, [ \mathfrak{t}^{-1}\mathfrak{b}_i]}(\mathfrak{n})\beta_i. $ 
Similarly as $Y_{K_1(\mathfrak{n})} $ decomposes into disjoint union of its  connected component $ \amalg^h_{ i = 1 } \Gamma_{ 1, [ \mathfrak{b}_i]}(\mathfrak{n})\backslash \mathbb{H}_3, $   $ Y_{ K'_{1, g^{-1}}( \mathfrak{n})}$ decomposes as follows. We have  $ Y_{ K'_{1, g^{-1}}( \mathfrak{n})} = \amalg^h_{ i = 1 } \Gamma'^{, \beta^{-1}_i}_{ 1, [ \mathfrak{b}_i]}(\mathfrak{n})\backslash \mathbb{H}_3.$ Indeed we know that the connected components of $Y_{ K'_{1, g^{-1}}( \mathfrak{n})} $ are $   \Gamma''_{ 1, [ \mathfrak{b}_i]}(\mathfrak{n})\backslash \mathbb{H}_3$ where $ \Gamma''_{ 1, [ \mathfrak{b}_i]}(\mathfrak{n}) =  g_i K'_{1, g^{-1}}( \mathfrak{n}) g^{-1}_i  \cap G(F) = g_i g^{-1} K_1( \mathfrak{n}) g g^{-1}_i  \cap \Gamma_{1, [\mathfrak{b}_i]}(\mathfrak{n}).$ Recall that $ \beta_i = g_{j_i} g g^{-1}_i k_i = \left(\begin{smallmatrix} y_i & 0  \\
0 & 1\end{smallmatrix}\right)
 \in  G( F)$ with $ k_i = \left(\begin{smallmatrix} u_i & 0 \\
0 & 1\end{smallmatrix}\right)
\in  g_i K_1(\mathfrak{n})g^{-1}_i.$ 
For $ \sigma \in  \Gamma_{ 1, [\mathfrak{b}_{j_i}]}(\mathfrak{n}) =   g_{ j_i} K_1(\mathfrak{n}) g^{-1}_{ j_i} \cap G( F) $ we have
\begin{eqnarray*}
\beta^{-1}_i\sigma \beta_i  & \in & \beta^{-1}_i g_{j_i} K_1(\mathfrak{n}) g^{-1}_{j_i} \beta_i\cap G( F) \\
&= &  k^{-1}_i g_i g^{-1} g^{-1}_{ j_i}g_{ j_i} K_{1}(\mathfrak{n})g_{j_i}g^{-1}_{j_i} g g^{-1}_i k_i \cap G(F) \\
 & = &  g_i g^{-1} K_1(\mathfrak{n}) g g^{-1}_i \cap G(F)
\end{eqnarray*}
where we have used the facts that  $ g_i, g, k_i$ commute and $ k_i, k^{-1}_i \in g_i K_1(\mathfrak{n})g^{-1}_i.$
This means that  $ \beta^{-1}_i \Gamma_{ 1, [\mathfrak{b}_{ j_i}]}(\mathfrak{n}) \beta_i =  \Gamma''_{ 1, [\mathfrak{ b}_i]}(\mathfrak{n}).$
Therefore we deduce that
$$ \Gamma'^{, \beta^{-1}_i}_{1, [\mathfrak{b}_i]}(\mathfrak{n}) = \Gamma_{1, [\mathfrak{b}_i]}(\mathfrak{n})\cap \beta^{-1}_i\Gamma_{1, [ \mathfrak{b}_{j_i}]}(\mathfrak{n})\beta_i =  \Gamma''_{ 1, [\mathfrak{ b}_i]}(\mathfrak{n}).$$
So we have the following projection map
$$\begin{CD}
Y_{ K'_{1, g^{-1}}( \mathfrak{n})} =  \amalg^h_{ i = 1 } \Gamma'^{, \beta^{-1}_i}_{1, [ \mathfrak{b}_i]}(\mathfrak{n})\backslash \mathbb{H}_3 \\ 
@VVs_gV \\   
Y_{K_1(\mathfrak{n})} = \amalg^h_{ i = 1 } \Gamma_{ 1, [ \mathfrak{b}_i]}(\mathfrak{n})\backslash \mathbb{H}_3.  @. 
\end{CD}
$$ 
The map $ s_g$ is of degree $  [ K_1(\mathfrak{n}): K'_{1, g^{-1}}(\mathfrak{n})] = N(\mathfrak{ t}) + 1.$  
The maps induced by  $ s_g$  on the connected components are also of  degree $ N(\mathfrak{t}) +1.$
The discussion we just made  implies that for $ \beta_i$ corresponding to  $ g$, that is to mean $ K_1(\mathfrak{n}) g K_1(\mathfrak{n}) = K_1(\mathfrak{n})g^{-1}_{j_i}\beta_i g_{i}K_1(\mathfrak{n})$ where $ j_i$ is the unique index such  that  the ideal $(det( g_{j_i} g g^{-1}_{i}))$ is principal, the following holds.
\begin{lemma}\label{Index} 
Keeping the same assumptions as above, then for any ideal $\mathfrak{n}$ coprime with $ \mathfrak{t} = (det(g))$, we have
$$[{\Gamma_{1, [\mathfrak{b}_{j_i}]}(\mathfrak{n})}: \beta_{i}{\Gamma_{1, [\mathfrak{b}_{i}]}(\mathfrak{n})}\beta^{ -1}_i\cap {\Gamma_{1, [\mathfrak{b}_{j_i}]}(\mathfrak{n})}]= N(\mathfrak{t}) +1.$$
\end{lemma}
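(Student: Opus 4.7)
The plan is to identify the index in question as the degree of a covering map between arithmetic hyperbolic $3$-manifolds, compute that degree on the adelic side using the explicit description of $K'_{1, g^{-1}}(\mathfrak{n})$, and then transfer the result to the correct pair of congruence subgroups via a double-coset bijection.

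First I would exploit the projection $s_g: Y_{K'_{1, g^{-1}}(\mathfrak{n})} \to Y_{K_1(\mathfrak{n})}$, which is a finite cover of degree $[K_1(\mathfrak{n}):K'_{1, g^{-1}}(\mathfrak{n})]$. Using the explicit description of $K'_{1, g^{-1}}(\mathfrak{n})$ as the set of matrices of $K_1(\mathfrak{n})$ whose $\mathfrak{t}$-component has $c_\mathfrak{t} \in \pi_\mathfrak{t}\mathcal{O}_\mathfrak{t}$, together with the surjective map $K_1(\mathfrak{n}) \twoheadrightarrow \mathbb{P}^1(\mathcal{O}/\mathfrak{t})$ sending $\left(\begin{smallmatrix} a & b \\ c & d \end{smallmatrix}\right) \mapsto (c:d)$ (whose fiber over $(0:1)$ is precisely $K'_{1, g^{-1}}(\mathfrak{n})$), the index equals $\#\mathbb{P}^1(\mathcal{O}/\mathfrak{t}) = N(\mathfrak{t}) + 1$.

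Second, since $\det$ is surjective from both $K_1(\mathfrak{n})$ and $K'_{1,g^{-1}}(\mathfrak{n})$ onto $\hat{\mathcal{O}}^*$, the connected components on both sides are indexed by the class group and $s_g$ preserves the component index. Hence, on the $i$-th component, $s_g$ restricts to a covering $\Gamma''_{1, [\mathfrak{b}_i]}(\mathfrak{n})\backslash \mathbb{H}_3 \to \Gamma_{1, [\mathfrak{b}_i]}(\mathfrak{n})\backslash \mathbb{H}_3$ of the same degree $N(\mathfrak{t}) + 1$. Combined with the identification $\Gamma''_{1, [\mathfrak{b}_i]}(\mathfrak{n}) = \Gamma'^{, \beta_i^{-1}}_{1, [\mathfrak{b}_i]}(\mathfrak{n})$ already established in the discussion preceding the lemma, this yields $[\Gamma_{1, [\mathfrak{b}_i]}(\mathfrak{n}) : \Gamma'^{, \beta_i^{-1}}_{1, [\mathfrak{b}_i]}(\mathfrak{n})] = N(\mathfrak{t}) + 1$.

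To conclude, I would transfer to the $j_i$-side via the standard double-coset bijection (of Lemma \ref{lem2-2} type): both $\Gamma_{1, [\mathfrak{b}_i]}(\mathfrak{n})/\Gamma'^{, \beta_i^{-1}}_{1, [\mathfrak{b}_i]}(\mathfrak{n})$ and the coset space $\Gamma_{1, [\mathfrak{b}_{j_i}]}(\mathfrak{n})/(\beta_i \Gamma_{1, [\mathfrak{b}_i]}(\mathfrak{n}) \beta_i^{-1} \cap \Gamma_{1, [\mathfrak{b}_{j_i}]}(\mathfrak{n}))$ are in bijection with the orbit space $\Gamma_{1, [\mathfrak{b}_{j_i}]}(\mathfrak{n})\,\beta_i\,\Gamma_{1, [\mathfrak{b}_i]}(\mathfrak{n}) / \Gamma_{1, [\mathfrak{b}_i]}(\mathfrak{n})$ (one by the map $\lambda \mapsto \beta_i\lambda$, the other by $\mu \mapsto \mu\beta_i$), and hence have the same cardinality. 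Combined with the previous paragraph this gives the asserted index $N(\mathfrak{t}) + 1$. The chief technical point is keeping the two-sided dictionary between coset spaces straight so that the $j_i$-index really matches the $i$-index produced by $s_g$; once that bookkeeping is in place, the statement is essentially a summary of the geometric/adelic discussion preceding it.
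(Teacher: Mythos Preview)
Your approach is exactly the paper's: the lemma is really a summary of the preceding adelic discussion, and you have reconstructed that discussion faithfully --- the $\mathbb{P}^1(\mathcal{O}/\mathfrak{t})$ computation of $[K_1(\mathfrak{n}):K'_{1,g^{-1}}(\mathfrak{n})]$, the passage to connected components via $s_g$, and the identification $\Gamma''_{1,[\mathfrak{b}_i]}(\mathfrak{n})=\Gamma'^{,\beta_i^{-1}}_{1,[\mathfrak{b}_i]}(\mathfrak{n})$ are all precisely what the paper does.

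There is one slip in your final bookkeeping step. The map $\lambda\mapsto\beta_i\lambda$ does \emph{not} give a bijection from $\Gamma_{1,[\mathfrak{b}_i]}(\mathfrak{n})/\Gamma'^{,\beta_i^{-1}}_{1,[\mathfrak{b}_i]}(\mathfrak{n})$ onto $\Gamma_{1,[\mathfrak{b}_{j_i}]}(\mathfrak{n})\,\beta_i\,\Gamma_{1,[\mathfrak{b}_i]}(\mathfrak{n})/\Gamma_{1,[\mathfrak{b}_i]}(\mathfrak{n})$: it is neither injective (since $\beta_i\lambda\Gamma_i=\beta_i\lambda'\Gamma_i$ only forces $\lambda^{-1}\lambda'\in\Gamma_i$, not $\Gamma'_i$) nor surjective (a general coset $\mu\beta_i\Gamma_i$ with $\mu\in\Gamma_{j_i}$ need not equal $\beta_i\lambda\Gamma_i$). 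The analogue of Lemma~\ref{lem2-2} gives $\Gamma_i/\Gamma'_i\cong\Gamma_{j_i}\backslash\Gamma_{j_i}\beta_i\Gamma_i$ (right orbits) and $\Gamma_{j_i}/\Gamma''_{j_i}\cong\Gamma_{j_i}\beta_i\Gamma_i/\Gamma_i$ (left orbits), and these two orbit counts need not agree for an arbitrary double coset. The clean fix is to run the identical covering argument with the \emph{other} leg $\tilde{s}_g:Y_{K'_{1,g}(\mathfrak{n})}\to Y_{K_1(\mathfrak{n})}$ of the Hecke correspondence: one has $[K_1(\mathfrak{n}):K'_{1,g}(\mathfrak{n})]=N(\mathfrak{t})+1$ by the same $\mathbb{P}^1$ count (using the top row), and the $j_i$-th component of $Y_{K'_{1,g}(\mathfrak{n})}$ is $\Gamma''^{,\beta_i}_{1,[\mathfrak{b}_{j_i}]}(\mathfrak{n})\backslash\mathbb{H}_3$, which yields the desired index directly. (The paper itself skips this transfer entirely, so you were right to flag that something extra is needed.)
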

Therefore, the Hecke eigenvalue corresponding to the action of $ T_\mathfrak{t}$ on the $\overline{\mathbb{F}}_p$-vector space $ {\rm H}^0( \Gamma_{1, [\mathfrak{b}_i]}(\mathfrak{n}), V^{l, t}_{r, s}(\overline{\mathbb{F}}_p ))$ where  $ \mathfrak{t}$ is a prime ideal coprime with $ \mathfrak{pn}$ is  $ N(\mathfrak{t})+1.$ Hence eigenvalue systems coming from the $\overline{\mathbb{F}}_p$-vector space  $\oplus^h_{ i = 1}{\rm H}^0( \Gamma_{1, [\mathfrak{b}_i]}(\mathfrak{n}), V^{l, t}_{r, s}(\overline{\mathbb{F}}_p ))$ are Eisenstein\index{Eisenstein} because the semisimplification of the attached Galois representations is the direct sum of the cyclotomic character and the trivial character.


As we shall make use  of Shapiro's isomorphism, we need to verify that it is compatible with the Hecke action on group cohomology. From the above discussion,  we  deduce that we can choose identical  coset representatives for the double cosets $$ \Gamma_{1, [ \mathfrak{t}^{-1}\mathfrak{b}_i]}(\mathfrak{pn})\beta_i \Gamma_{1, [\mathfrak{b}_i]}(\mathfrak{pn})/\Gamma_{1, [\mathfrak{b}_i]}(\mathfrak{pn})\,\, \text{and for}\,\,  \Gamma_{1, [\mathfrak{t}^{-1}\mathfrak{b}_i]}( \mathfrak{n})\beta_i \Gamma_{1, [\mathfrak{b}_i]}(\mathfrak{n})/\Gamma_{1, [\mathfrak{b}_i]}(\mathfrak{n}).$$ This will be used for the compatibility of the  Hecke action with the Shapiro's isomorphism. 
\subsubsection{Compatibility of Shapiro's lemma with the Hecke action}
Recall that when $ \Gamma' < \Gamma$ are  congruence subgroups  and  $ M$ is a $\Gamma'$-module then  Shapiro's isomorphism\index{Shapiro's isomorphism} reads as $$ {\rm H }^*( \Gamma, {\rm Ind}^{ \Gamma}_{ \Gamma'} ( M)) \cong  {\rm H }^*( \Gamma',  M).$$ It is the isomorphism induced by  the restriction $j:  \Gamma' \hookrightarrow \Gamma $ and the homomorphism 
\begin{eqnarray*}
\phi:   {\rm Ind}^{ \Gamma}_{ \Gamma'} ( M) &\rightarrow& M \\
f  &\mapsto& f(1).                                                                                                   \end{eqnarray*}
Therefore in terms of cocycles we have
\begin{eqnarray*}
Sh: {\rm H }^*( \Gamma, {\rm Ind}^{ \Gamma}_{ \Gamma'} ( M)) &\rightarrow&  {\rm H }^*( \Gamma',  M) \\
c &\mapsto& \phi\circ c \circ j.
\end{eqnarray*}
From Subsection \ref{SubsecH}, we know that for $ g \in \Delta^\mathfrak{a}_1(\mathfrak{ pn})$ with $  \mathfrak{a}$ coprime with $\mathfrak{pn},$ any set of orbit representatives of the orbit space  $   K_1( \mathfrak{pn}) g K_1( \mathfrak{pn})/K_1( \mathfrak{pn})  $ belongs to  $ \Delta^\mathfrak{a}_1(\mathfrak{ pn})  $ where $ \mathfrak{a} = (det(g))\mathcal{O}.$ In a  similar fashion any set of orbit representatives of  $ K_1( \mathfrak{n}) g K_1( \mathfrak{n})/K_1( \mathfrak{n})$ belongs to $ \Delta^\mathfrak{a}_1(\mathfrak{n}).$ We also obtain that  any set of  representatives of the orbit space $\Gamma_{1, [\mathfrak{b}_{j_i}]}(\mathfrak{n})\beta_i \Gamma_{1,[\mathfrak{b}_{i}]}(\mathfrak{n})/\Gamma_{1,[\mathfrak{b}_{i}]}(\mathfrak{n})$ belong to  $ \Lambda^\mathfrak{c}_{1, [\mathfrak{b}_{i}]}(\mathfrak{n})$ when $ \beta_i$ is from $ \Lambda^\mathfrak{c}_{1, [\mathfrak{b}_{i}]}(\mathfrak{n}).$  For the forthcoming statement we need to recall some important facts. On page \pageref{Prop2-2}, we saw that reduction modulo $\mathfrak{p}$ provides us the following isomorphism of $ \Gamma_{1, [\mathfrak{b}_i]}( \mathfrak{n})$-modules:
$$ {\rm Ind}^{\Gamma_{1, [\mathfrak{b}_i]}(\mathfrak{n})}_{\Gamma_{1, [\mathfrak{b}_i]}(\mathfrak{pn})}(\overline{\mathbb{F}}_p)\cong {\rm Ind}^{\tilde{G}}_{\tilde{U}}(\overline{\mathbb{F}}_p) \cong {\rm Ind}^{\Gamma_{1, [\mathfrak{a}^{-1}\mathfrak{b}_i]}(\mathfrak{n})}_{\Gamma_{1, [\mathfrak{a}^{-1}\mathfrak{b}_i]}(\mathfrak{pn})}(\overline{\mathbb{F}}_p)   $$
where $ \tilde{U} = \{ \left(\begin{smallmatrix} a & b \\ 0 & 1 \end{smallmatrix}\right)
\in \tilde{G} \}\subset \tilde{B}$ with $ \tilde{B}$ the Borel subgroup of $ \tilde{G}$ and $ \overline{\mathbb{F}}_p $ is endowed with the structure of a trivial left $ \Gamma_{1, [\mathfrak{b}_i]}(\mathfrak{n})$-module.  The left action of the latter on  ${\rm Ind}^{\Gamma_{1, [\mathfrak{b}_i]}(\mathfrak{n})}_{\Gamma_{1, [\mathfrak{b}_i]}(\mathfrak{pn})}(\overline{\mathbb{F}}_p)$ is as follows: for $  \gamma \in \Gamma_{1, [\mathfrak{b}_i]}(\mathfrak{n})$ and $ f \in {\rm Ind}^{\Gamma_{1, [\mathfrak{b}_i]}(\mathfrak{n})}_{\Gamma_{1, [\mathfrak{b}_i]}(\mathfrak{pn})}(\overline{\mathbb{F}}_p)$ we have   $ (\gamma.f)(h):= f( h\gamma).$
By definition $${\rm Ind}^{\tilde{G}}_{\tilde{U}}(\overline{\mathbb{F}}_p) := \{ f  : \tilde{G}\rightarrow \overline{\mathbb{F}}_p: f( u h )= u f( h) = f(h), \, \forall \, u \in \tilde{U}, h \in \tilde{G}\},$$ that is the collection of all the  $\tilde{U}$-left invariant maps from $\tilde{G}$ to  $ \overline{\mathbb{F}}_p.$  Because for each  $ i = 1,  \cdots,  h, $  any element $\lambda \in \Lambda^\mathfrak{c}_{1, [\mathfrak{b}_i]}(\mathfrak{pn})$ has its reduction  belonging to $\tilde{U},$ we derive that any  $ f\in {\rm Ind}^{\tilde{G}}_{\tilde{U}}(\overline{\mathbb{F}}_p)$ satisfies$$ f(\lambda) = f(1).$$
\begin{proposition}
Let $ \mathfrak{a}$ be a prime ideal coprime with  $ \mathfrak{pn}.$ 
Let  $T_g = T_\mathfrak{a}$ be the Hecke operator associated with $g\in \Delta^\mathfrak{a}_1(\mathfrak{n})$ where $ \mathfrak{a} = det(g)\mathcal{O} $ the ideal corresponding to $ g.$ Explicitly $ g$ is the matrix  with the identity matrix in all finite places expect at the $ \mathfrak{a}$-place where there is the matrix $ \left(\begin{smallmatrix} \pi_\mathfrak{a} & 0 \\ 0 & 1\end{smallmatrix}\right).$ Here $ \pi_\mathfrak{a}$ is a uniformizer of $\mathcal{O}_\mathfrak{a}.$
Then the following diagram 
$$\begin{CD}
{\rm H}^1( \Gamma_{1, [\mathfrak{b}_i]}(\mathfrak{ n}), {\rm Ind}^{\Gamma_{1, [\mathfrak{b}_i]}(\mathfrak{n})}_{\Gamma_{1, [\mathfrak{b}_i]}(\mathfrak{pn})}(\overline{\mathbb{F}}_p) )  @> Sh>>   {\rm H}^1(\Gamma_{1, [\mathfrak{b}_i]}(\mathfrak{ pn}), \overline{\mathbb{F}}_p)    \\
@VVT_{\beta_i} V   @VVT_{\beta_i} V   \\
{\rm H}^1( \Gamma_{1, [\mathfrak{a}^{-1}\mathfrak{b}_i]}(\mathfrak{n}), {\rm Ind}^{\Gamma_{1, [\mathfrak{a}^{-1}\mathfrak{b}_i]}(\mathfrak{ n})}_{\Gamma_{1, [\mathfrak{a}^{-1}\mathfrak{b}_i]}(\mathfrak{pn})}(\overline{\mathbb{F}}_p))  @> Sh>> {\rm H}^1(\Gamma_{1, [\mathfrak{a}^{-1}\mathfrak{b}_i]}(\mathfrak{ pn}), \overline{\mathbb{F}}_p)
\end{CD}
$$
is well defined and is commutative.
\end{proposition}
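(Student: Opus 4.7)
The plan is to verify commutativity at the level of non-homogeneous one-cocycles using the explicit formulas for $T_{\beta_i}$ from Subsection \ref{SubsecH} and for $Sh$ recalled just above, then invoke well-definedness of restriction, conjugation, corestriction, and Shapiro to promote the cocycle-level identity to one of cohomology classes. The cornerstone is the choice of single-coset representatives enabled by the discussion preceding the statement.

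I would first fix a decomposition $\Gamma_{1, [\mathfrak{a}^{-1}\mathfrak{b}_i]}(\mathfrak{pn})\beta_i\Gamma_{1, [\mathfrak{b}_i]}(\mathfrak{pn}) = \coprod_r \delta_r\Gamma_{1, [\mathfrak{b}_i]}(\mathfrak{pn})$ with $\delta_r \in \Lambda^{\mathfrak{a}}_{1, [\mathfrak{b}_i]}(\mathfrak{pn})$, so that each reduction $\overline{\delta_r}$ lies in $\tilde{U}$ (as the excerpt explicitly notes for elements of $\Lambda^{\mathfrak{a}}_{1, [\mathfrak{b}_i]}(\mathfrak{pn})$). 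By the matching of the cardinalities $N(\mathfrak{a})+1$ at both levels (Lemma \ref{Index}), the same $\delta_r$ also serve as representatives of $\Gamma_{1, [\mathfrak{a}^{-1}\mathfrak{b}_i]}(\mathfrak{n})\beta_i\Gamma_{1, [\mathfrak{b}_i]}(\mathfrak{n})/\Gamma_{1, [\mathfrak{b}_i]}(\mathfrak{n})$. For a cocycle $c$ representing a class in the top-left corner and using the identification ${\rm Ind}^{\Gamma_{1, [\mathfrak{b}_i]}(\mathfrak{n})}_{\Gamma_{1, [\mathfrak{b}_i]}(\mathfrak{pn})}(\overline{\mathbb{F}}_p) \cong {\rm Ind}^{\tilde{G}}_{\tilde{U}}(\overline{\mathbb{F}}_p)$, the Hecke and Shapiro formulas yield, for $\omega \in \Gamma_{1, [\mathfrak{a}^{-1}\mathfrak{b}_i]}(\mathfrak{pn})$,
$$Sh(T_{\beta_i}(c))(\omega) = \bigl(T_{\beta_i}(c)(\omega)\bigr)(1) = \sum_r c\bigl(\delta_r^{-1}\omega\delta_{s_r(\omega)}\bigr)(\overline{\delta_r}),$$
$$T_{\beta_i}(Sh(c))(\omega) = \sum_r c\bigl(\delta_r^{-1}\omega\delta_{s'_r(\omega)}\bigr)(1),$$
where $s_r(\omega)$ and $s'_r(\omega)$ are the single-coset indices characterized by the level-$\mathfrak{n}$ and level-$\mathfrak{pn}$ conditions respectively, and where in the second line the trivial $\tilde{G}$-action on $\overline{\mathbb{F}}_p$ has absorbed the prefactor $\delta_r$.

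The termwise comparison of the two sums then rests on two elementary observations: (i) since $\Gamma_{1, [\mathfrak{b}_i]}(\mathfrak{pn}) \subset \Gamma_{1, [\mathfrak{b}_i]}(\mathfrak{n})$, the condition defining $s'_r(\omega)$ is strictly stronger than the one defining $s_r(\omega)$, so by uniqueness of the single-coset representative the two indices coincide on $\Gamma_{1, [\mathfrak{a}^{-1}\mathfrak{b}_i]}(\mathfrak{pn})$; and (ii) the defining $\tilde{U}$-left invariance $f(uh)=f(h)$ of elements of ${\rm Ind}^{\tilde{G}}_{\tilde{U}}(\overline{\mathbb{F}}_p)$, applied with $u = \overline{\delta_r} \in \tilde{U}$ and $h = 1$, gives $c(\gamma)(\overline{\delta_r}) = c(\gamma)(1)$ for every $\gamma \in \Gamma_{1, [\mathfrak{b}_i]}(\mathfrak{pn})$. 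The two sums agree term by term. I expect the only genuinely non-trivial input to be the combinatorial observation that coset representatives can be chosen simultaneously for both levels and with reductions landing in $\tilde{U}$; this is exactly what the preamble to the statement furnishes, after which the remainder of the argument is a direct unravelling of the formulas.
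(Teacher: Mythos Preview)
Your proposal is correct and follows essentially the same approach as the paper's own proof: choose coset representatives $\delta_r$ for the level-$\mathfrak{pn}$ double coset that simultaneously serve at level $\mathfrak{n}$, use that their reductions lie in $\tilde{U}$ so that evaluation at $\overline{\delta_r}$ equals evaluation at $1$ by left $\tilde{U}$-invariance, and absorb the $\delta_r$-prefactor via the trivial action on $\overline{\mathbb{F}}_p$. If anything, your observation (i) that the permutation indices $s_r(\omega)$ and $s'_r(\omega)$ coincide for $\omega$ at level $\mathfrak{pn}$ makes explicit a point the paper leaves implicit in its final displayed equality.
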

\begin{proof}
The arrow that could  potentially not be  well defined  is the left vertical arrow. However this is  not an  issue since for each $i$ reduction modulo $p$ yields that the coefficients are isomorphic $ \Gamma_{1, [\mathfrak{b}_i]}(\mathfrak{n})$-modules as we just recalled above.
We next verify the commutativity of the diagram. We set  $ M :=  \overline{\mathbb{F}}_p.$ Let $ \beta_i$ corresponding to $g$ as provides by Lemma \ref{lem2_3_0}.
From a decomposition of the double coset $\Gamma_{1, [\mathfrak{a}^{-1}\mathfrak{b}_{i}]}(\mathfrak{pn})\beta_i \Gamma_{1, [\mathfrak{b}_i]}(\mathfrak{pn}) = \amalg_r \delta_r\Gamma_{1, [\mathfrak{b}_i]}(\mathfrak{pn}),$ we know that  \begin{eqnarray*} T_{\beta_i}: {\rm H}^1(\Gamma_{1, [\mathfrak{b}_i]}(\mathfrak{pn}), M)  & \rightarrow & {\rm H}^1(\Gamma_{1, [\mathfrak{a}^{-1}\mathfrak{b}_i]}(\mathfrak{ pn}), M)\\   &c \mapsto & ( w \mapsto \sum_r \delta_r. c(\delta^{-1}_r w \delta_{s_r}))
\end{eqnarray*}
where $ s_r$ is the unique index such that $ \delta^{-1}_r w \delta_{s_r} \in \Gamma_{1, [\mathfrak{b}_i]}(\mathfrak{pn}).$ Therefore for  $c$ a cocycle from  ${\rm H}^1(\Gamma_{1, [\mathfrak{b}_i]}(\mathfrak{ n}), {\rm Ind}^{\Gamma_{1, [\mathfrak{b}_i]}(\mathfrak{ n})}_{\Gamma_{1, [\mathfrak{b}_i]}(\mathfrak{pn})}(M) )$ and $ w$ from  $ \Gamma_{1, [\mathfrak{b}_i]}(\mathfrak{pn})$ we have  $$    \big(T_{\beta_i} \circ Sh(c)\big)(w) = \sum_r \delta_r\big( c(\delta^{-1}_r w \delta_{s_r})(1)\big).$$
Let $ \Gamma_{1, [\mathfrak{a}^{-1}\mathfrak{b}_i]}(\mathfrak{n})\beta_i \Gamma_{1, [\mathfrak{b}_i]}(\mathfrak{n}) =\amalg_s\lambda_s \Gamma_{1, [\mathfrak{b}_i]}(\mathfrak{n}) $  so that 
\begin{eqnarray*} 
T_{\beta_i}: {\rm H}^1(\Gamma_{1, [\mathfrak{b}_i] }(\mathfrak{n}),{\rm Ind}^{\Gamma_{1, [\mathfrak{b}_i]}(\mathfrak{n})}_{\Gamma_{1, [\mathfrak{b}_i]}(\mathfrak{pn})}(M) )&\rightarrow& {\rm H}^1(\Gamma_{1, [\mathfrak{a}^{-1}\mathfrak{b}_i] }(\mathfrak{n}),{\rm Ind}^{\Gamma_{1, [\mathfrak{a}^{-1}\mathfrak{b}_i]}(\mathfrak{ n})}_{\Gamma_{1, [\mathfrak{a}^{-1}\mathfrak{b}_i]}(\mathfrak{pn})}(M))\\ c & \mapsto & ( w \mapsto \sum_s \lambda_s c(\lambda^{-1}_s w \lambda_{n_s})).
\end{eqnarray*}
Here $ n_s$ is the unique index such that $ \lambda^{-1}_s w \lambda_{n_s} \in \Gamma_{1, [\mathfrak{b}_i]}(\mathfrak{n})$ for $ w \in \Gamma_{1, [\mathfrak{a}^{-1}\mathfrak{b}_i]}(\mathfrak{n}).$
We also have \begin{eqnarray*} Sh: {\rm H}^1( \Gamma_{1, [\mathfrak{a}^{-1}\mathfrak{b}_i]}(\mathfrak{n}), {\rm Ind}^{\Gamma_{1, [\mathfrak{a}^{-1}\mathfrak{b}_i]}(\mathfrak{n})}_{\Gamma_{1, [\mathfrak{a}^{-1}\mathfrak{b}_i]}(\mathfrak{pn})}(M))  & \rightarrow & {\rm H}^1( \Gamma_{1,[\mathfrak{a}^{-1}\mathfrak{b}_i] }(\mathfrak{pn}), M)\\  c &\mapsto & ( w \mapsto c(w)(1)).
\end{eqnarray*}
Now a set of coset representatives of  $ \Gamma_{1, [\mathfrak{a}^{-1}\mathfrak{b}_i]}( \mathfrak{pn})\beta_i\Gamma_{1, [\mathfrak{b}_i]}(\mathfrak{pn})/\Gamma_{1, [\mathfrak{b}_i]}(\mathfrak{pn})$ can be chosen to be identical to    coset representatives of  $ \Gamma_{1, [\mathfrak{a}^{-1}\mathfrak{b}_i]}( \mathfrak{n})\beta_i\Gamma_{1, [\mathfrak{b}_i]}( \mathfrak{n})/\Gamma_{1, [\mathfrak{b}_i]}( \mathfrak{n}).$ So choose  $ \delta_r \in \Lambda^\mathfrak{c}_{1, [\mathfrak{b}_i]}(\mathfrak{pn})$ such that $   \Gamma_{1, [\mathfrak{a}^{-1}\mathfrak{b}_i]}( \mathfrak{pn})\beta_i\Gamma_{1, [\mathfrak{b}_i]}( \mathfrak{pn}) = \amalg^k_r \delta_r \Gamma_{1, [\mathfrak{b}_i]}( \mathfrak{pn}).$ Also take $ \lambda_1 = \delta_1, \cdots, \lambda_k = \delta_k $  all belonging to $ \Lambda^\mathfrak{c}_{1, [\mathfrak{b}_i]}(\mathfrak{n})$ such that  $\Gamma_{1, [\mathfrak{a}^{-1}\mathfrak{b}_i]}( \mathfrak{n})\beta_i\Gamma_{1, [\mathfrak{b}_i]}( \mathfrak{n}) = \amalg^k_r \lambda_r \Gamma_{1, [\mathfrak{b}_i]}( \mathfrak{n}).$
Let $ c$ be a  cocycle  from $ {\rm H}^1( \Gamma_{1, [\mathfrak{b}_i]}(\mathfrak{ n}), {\rm Ind}^{\Gamma_{1, [\mathfrak{b}_i]}(\mathfrak{ n})}_{\Gamma_{1, [\mathfrak{b}_i]}(\mathfrak{pn})}(M) ) $ and $ w \in \Gamma_{1, [\mathfrak{a}^{-1}\mathfrak{b}_i]}(\mathfrak{pn}).$ 
We have then  $$ \big(Sh \circ T_{\beta_i}(c)\big)(w ) = \sum^k_r \big(\lambda_r c( \lambda^{-1}_r w \lambda_{n_r})\big)(1).$$ 
Because for each $ i = 1, \cdots,  h,$ the action of $ \Lambda^\mathfrak{c}_{1,  [\mathfrak{b}_i]}(\mathfrak{n})$ on $\overline{\mathbb{F}}_p$ is trivial,  we obtain that  $$ \delta_r\big( c(\delta^{-1}_r w \delta_{n_r})(1)\big) = c(\delta^{-1}_r w \delta_{n_r})(1).$$ But also since  $\lambda_r$ reduces modulo  $\mathfrak{pn}$ to an element in $ \tilde{U}$ we have that  $$ \big(\lambda_r c( \lambda^{-1}_r w \lambda_{n_r})\big)(1) =  c( \lambda^{-1}_r w \lambda_{n_r})(\lambda_r) = c( \lambda^{-1}_r w \lambda_{n_r})(1).$$
Therefore we deduce that $$  \sum^k_r \lambda_r c( \lambda^{-1}_r w \lambda_{n_r})(1) = \sum^k_r \delta_r c(\delta^{-1}_r w \delta_{n_r})(1).$$
\end{proof}
One other important fact that tells us that we only have to  look at Serre weights\index{Serre! weights}, that is to mean  irreducible $ \overline{ \mathbb{F}}_p[\tilde{G}]$-modules   for the analysis of  Hecke eigenclasses is the following proposition.
\begin{proposition}\label{prop3}
Let  $ \mathfrak{n}$ be an integral ideal such that the positive generator of $ \mathfrak{n}\cap \mathbb{Z}$ is greater than $ 3.$ Consider the open compact subgroup $ K_1(\mathfrak{n})$ of level $ \mathfrak{n}.$
Let    $M$ be a finite dimensional $\overline{ \mathbb{F}}_p[\tilde{G}]$-module. Let  $\Psi$ be an eigenvalue system occurring in $ \oplus^h_{ i = 1}{\rm H}^1( \Gamma_{1, [\mathfrak{b}_i]}(\mathfrak{n}), M)$  and taking   values in $ \overline{\mathbb{F}}_p.$ Then, there exists $ W,$ an irreducible subquotient of  $M$ such that $\Psi $  also occurs in $ \oplus^h_{ i = 1}{\rm H}^1(\Gamma_{ 1, [\mathfrak{b}_i]}(\mathfrak{n}),  W ).$  
\end{proposition}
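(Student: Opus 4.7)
The plan is to induct on $\dim_{\overline{\mathbb{F}}_p} M$. If $M$ is irreducible set $W := M$; otherwise fix a maximal proper $\tilde{G}$-submodule $A \subsetneq M$, so that $B := M/A$ is irreducible. The short exact sequence $0 \to A \to M \to B \to 0$ of $\tilde{G}$-modules gives, via the reduction homomorphism $\Gamma_{1,[\mathfrak{b}_i]}(\mathfrak{n}) \to \tilde{G}$, a short exact sequence of $\Gamma_{1,[\mathfrak{b}_i]}(\mathfrak{n})$-modules for each $i$. Summing the associated long exact cohomology sequences over $i$ produces
\begin{equation*}
\cdots \to \bigoplus_{i} \mathrm{H}^1(\Gamma_{1,[\mathfrak{b}_i]}(\mathfrak{n}), A) \xrightarrow{\alpha} \bigoplus_{i} \mathrm{H}^1(\Gamma_{1,[\mathfrak{b}_i]}(\mathfrak{n}), M) \xrightarrow{\beta} \bigoplus_{i} \mathrm{H}^1(\Gamma_{1,[\mathfrak{b}_i]}(\mathfrak{n}), B) \to \cdots,
\end{equation*}
and I claim this is $\mathcal{H}$-equivariant: in the formula $T_g = \mathrm{cor}\circ \mathrm{conj}_g^*\circ\mathrm{res}$ recalled in Subsection \ref{SubsecH}, each of the three constituents is functorial in the coefficient module, and the boundary map commutes with $T_g$ by naturality.

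Given a non-zero $\Psi$-eigenclass $f$ in the middle group, I split into two cases. If $\beta(f) \neq 0$, then $\beta(f)$ is a non-zero $\Psi$-eigenclass in $\bigoplus_{i}\mathrm{H}^1(\Gamma_{1,[\mathfrak{b}_i]}(\mathfrak{n}), B)$ by $\mathcal{H}$-linearity of $\beta$, and I take $W := B$. Otherwise $f\in\mathrm{Image}(\alpha)$, so $\Psi$ occurs in $\mathrm{Image}(\alpha)$, which is a quotient of $\bigoplus_{i}\mathrm{H}^1(\Gamma_{1,[\mathfrak{b}_i]}(\mathfrak{n}), A)$.

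To finish I invoke the following elementary fact: in a finite-dimensional $\overline{\mathbb{F}}_p$-vector space with an $\mathcal{H}$-action, an eigensystem $\Psi$ occurs if and only if the one-dimensional simple $\mathcal{H}$-module defined by $\Psi$ is one of the Jordan--H\"older constituents, and composition factors of any subquotient are also composition factors of the whole module. Hence $\Psi$ occurs in $\bigoplus_{i}\mathrm{H}^1(\Gamma_{1,[\mathfrak{b}_i]}(\mathfrak{n}), A)$ itself. Applying the inductive hypothesis to $A$, whose dimension is strictly smaller than that of $M$, yields an irreducible subquotient $W$ of $A$, and \emph{a fortiori} of $M$, such that $\Psi$ occurs in $\bigoplus_{i}\mathrm{H}^1(\Gamma_{1,[\mathfrak{b}_i]}(\mathfrak{n}), W)$. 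The only step that is not entirely formal is the Hecke-equivariance of the long exact sequence (in particular of the connecting map), but this follows routinely from the double-coset description of the Hecke operators on group cohomology.
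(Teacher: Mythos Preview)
Your argument is correct and follows essentially the same approach as the paper: use the long exact sequence in cohomology coming from a short exact sequence of coefficient modules, together with Hecke-equivariance, and recurse on a smaller module. The only cosmetic difference is that the paper picks an irreducible \emph{submodule} $W\subset M$ and recurses on the quotient $M/W$ when the eigenclass survives there, whereas you pick an irreducible \emph{quotient} $B=M/A$ and recurse on the submodule $A$ when the eigenclass dies in $B$; you are also more explicit than the paper about why an eigensystem occurring in the image of $\alpha$ must already occur in its source (the Jordan--H\"older argument for commutative $\mathcal{H}$ over $\overline{\mathbb{F}}_p$), a point the paper leaves implicit.
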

\begin{proof} 
Let  $ W$ be an irreducible submodule  $ M.$  Denote the quotient $ M/W $ as  $  N.$ Set $  K = K_1(\mathfrak{n}).$ This is  an open compact subgroup of $ G(\hat{\mathcal{O}})$ which is neat and surjects onto $ \hat{\mathcal{O}}^*$ via the determinant.
Write the following  exact sequence of locally constant sheaves on  $ Y_K$ associated with $ W,  M,  N $ respectively:
$$ 0 \rightarrow \tilde{W} \rightarrow \tilde{M} \rightarrow  \tilde{N} \rightarrow 0.$$ From this one obtains the exact sequence in cohomology:
$$  \cdots\rightarrow {\rm H}^1( Y_K,  \tilde{W}) \rightarrow {\rm H}^1( Y_K,  \tilde{M}) \rightarrow {\rm H}^1( Y_K,  \tilde{N})\rightarrow \cdots.$$
Let  $ s$ be a system of Hecke eigenvalues from  $  {\rm H}^1( Y_K,  \tilde{M}).$ If the image  of $s$   is zero, then $s$ occurs in  $ {\rm H}^1( Y_K,  \tilde{W}),$ and we are done.  Otherwise it is arisen from $ {\rm H}^1( Y_K,  \tilde{N}).$ We then replace  $\tilde{M}$ by  $\tilde{N}$ and repeat the argument.
\end{proof}
\subsubsection{Statements and proofs of the main results}
The statement about the reduction to weight two is as follows.
\begin{theorem}\label{RedThm}
Let $F$ be an imaginary quadratic field of class number $h$. Let $\mathfrak{n}$ be an integral ideal in $F$ and let $ p> 5$ be a rational prime which is inert in $F$ and coprime with $ \mathfrak{n}.$ Suppose  that the positive generator of $ \mathfrak{n}\cap \mathbb{Z}$ is greater than $ 3.$ Let $ 0 \leq   r, s \leq p-1$ and $ 0\leq  l, t \leq p-1,$ with $ l, t$ not both equal to $p-1.$  Let $ \psi$ be a {\it system of Hecke eigenvalues} in $ \oplus^h_{ i = 1}{\rm H}^1( \Gamma_{1, [\mathfrak{b}_i]}( \mathfrak{n}), V^{l, t}_{r, s}( \overline{\mathbb{F}}_p ) ).$ Then  
$ \psi$ occurs in $ \oplus^h_{ i = 1}{\rm H}^1( \Gamma_{1, [\mathfrak{b}_i]}( \mathfrak{pn}), \overline{\mathbb{F}}_{p}\otimes det^{l+pt})$ except possibly when $  ( r = 1, s = p-2 )$  or $  ( r = p-2, s = 1).$ In these potential exceptions, the obstruction is coming from Hecke eigenvalue systems which are Eisenstein.
\end{theorem}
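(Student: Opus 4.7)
My plan is to combine four ingredients already developed in the paper: (a) the embedding $\Psi\colon V^{l,t}_{r,s}(\overline{\mathbb{F}}_p) \hookrightarrow \mathrm{U}^{l+pt}_{r+ps}(\overline{\mathbb{F}}_p)$ from Lemma \ref{emblem}, (b) the decomposition of $\mathrm{Ind}^{\Gamma_{1,[\mathfrak{b}_i]}(\mathfrak{n})}_{\Gamma_{1,[\mathfrak{b}_i]}(\mathfrak{pn})}(\overline{\mathbb{F}}_p)$ as a direct sum of the modules $\mathrm{U}_d(\overline{\mathbb{F}}_p)$ from Proposition \ref{Prop2-2}, (c) the Hecke-compatible Shapiro isomorphism established immediately before the theorem, and (d) Proposition \ref{prop3}, which allows us to pass between a module and its irreducible subquotients when tracking systems of Hecke eigenvalues. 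Set $e := l + pt$.

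\textbf{Step 1 (setup).} Apply $\Psi$ (tensored with $\det^e$ is unnecessary since the target is already twisted correctly) to obtain a Hecke-equivariant short exact sequence of $\overline{\mathbb{F}}_p[\tilde{G}]$-modules
\[
0 \longrightarrow V^{l,t}_{r,s}(\overline{\mathbb{F}}_p) \longrightarrow \mathrm{U}^{e}_{r+ps}(\overline{\mathbb{F}}_p) \longrightarrow W^{l,t}_{r,s} \longrightarrow 0.
\]
The associated long exact sequence in $\Gamma_{1,[\mathfrak{b}_i]}(\mathfrak{n})$-cohomology, summed over $i$, yields
\[
\bigoplus_{i} \mathrm{H}^0(\Gamma_{1,[\mathfrak{b}_i]}(\mathfrak{n}),W^{l,t}_{r,s}) \xrightarrow{\partial} \bigoplus_{i} \mathrm{H}^1(\Gamma_{1,[\mathfrak{b}_i]}(\mathfrak{n}),V^{l,t}_{r,s}(\overline{\mathbb{F}}_p)) \longrightarrow \bigoplus_{i} \mathrm{H}^1(\Gamma_{1,[\mathfrak{b}_i]}(\mathfrak{n}),\mathrm{U}^{e}_{r+ps}(\overline{\mathbb{F}}_p)).
\]

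\textbf{Step 2 (lifting under the non-exceptional hypothesis).} Suppose we are outside the two excluded cases. By Lemma \ref{lem3.6}(4) and Lemma \ref{lem2_3_7}(2), the only way $\bigoplus_i \mathrm{H}^0(\Gamma_{1,[\mathfrak{b}_i]}(\mathfrak{n}),W^{l,t}_{r,s})$ can be non-zero is when $r=s=p-1$ and $f\mid e$; in all other situations $\partial = 0$ so the system $\psi$ lifts to $\bigoplus_{i} \mathrm{H}^1(\Gamma_{1,[\mathfrak{b}_i]}(\mathfrak{n}),\mathrm{U}^{e}_{r+ps}(\overline{\mathbb{F}}_p))$. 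Because $\mathrm{U}^{e}_{r+ps}$ is a direct summand of $\mathrm{Ind}^{\Gamma_{1,[\mathfrak{b}_i]}(\mathfrak{n})}_{\Gamma_{1,[\mathfrak{b}_i]}(\mathfrak{pn})}(\overline{\mathbb{F}}_p)\otimes\det^e$ (twisting Proposition \ref{Prop2-2}), the lift survives into the cohomology of the induced module, and Hecke-equivariant Shapiro then transports $\psi$ into $\bigoplus_{i} \mathrm{H}^1(\Gamma_{1,[\mathfrak{b}_i]}(\mathfrak{pn}), \overline{\mathbb{F}}_p\otimes\det^e)$, which is the desired conclusion.

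\textbf{Step 3 (the remaining Eisenstein case $r=s=p-1$).} Here $(W^{l,t}_{p-1,p-1})^{\mathrm{ss}}=V^{l+p-1,\,t+p-1}_{0,0}(\overline{\mathbb{F}}_p)$, so Lemma \ref{lem3} combined with Lemma \ref{Index} shows that any eigenvalue system in the image of $\partial$ is Eisenstein: $T_\mathfrak{t}\mapsto N(\mathfrak{t})+1$ (up to the determinantal twist). Apply Proposition \ref{prop3} to the trivial subquotient $\overline{\mathbb{F}}_p\otimes\det^e$ of $\overline{\mathbb{F}}_p\otimes\det^e$ itself at level $\mathfrak{pn}$: the analogous $\mathrm{H}^0$-computations (again via Lemmas \ref{lem2}, \ref{lem3}, \ref{Index}) show that this same Eisenstein system appears directly in $\bigoplus_i \mathrm{H}^1(\Gamma_{1,[\mathfrak{b}_i]}(\mathfrak{pn}),\overline{\mathbb{F}}_p\otimes\det^e)$ via the long exact sequence for $0\to\overline{\mathbb{F}}_p\to \mathrm{Ind}\to \mathrm{Ind}/\overline{\mathbb{F}}_p\to 0$ (or by direct construction of an Eisenstein class at weight two and level $\mathfrak{pn}$). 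Thus, even when $\partial\neq 0$ in this case, no information is lost.

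\textbf{Step 4 (the genuinely exceptional cases).} For $(r,s)=(1,p-2)$ with $f\mid p(p-1)+e$, or $(r,s)=(p-2,1)$ with $f\mid e+p-1$, Lemma \ref{lem2_3_7}(2) gives $\bigoplus_i \mathrm{H}^0(\Gamma_{1,[\mathfrak{b}_i]}(\mathfrak{n}),W^{l,t}_{r,s}) \cong \bigoplus_i \overline{\mathbb{F}}_p$ and the Hecke action (again via Lemma \ref{Index}) is the Eisenstein character $T_\mathfrak{t}\mapsto N(\mathfrak{t})+1$. The only systems in $\mathrm{H}^1(V^{l,t}_{r,s})$ which a priori fail to lift are therefore Eisenstein, as asserted. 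The main obstacle in the whole proof lies precisely in Step 3: verifying that when $\partial$ is non-zero in the $r=s=p-1$ regime, the resulting Eisenstein system is genuinely visible in weight two — this is what forces one to supplement the pure long-exact-sequence argument with a direct construction (or comparison) of the weight-two Eisenstein classes at level $\mathfrak{pn}$.
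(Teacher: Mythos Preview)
Your overall strategy---working directly at the $\Gamma_{1,[\mathfrak{b}_i]}(\mathfrak{n})$ level rather than the paper's route through $\Gamma^1_{1,[\mathfrak{b}_i]}(\mathfrak{n})$ followed by inflation--restriction and twisting---is a legitimate and somewhat more streamlined alternative. The paper proves the embedding at the $\mathrm{SL}_2$ level (where the determinant twist is invisible), then recovers the $\mathrm{GL}_2$ statement by tensoring with $\det^{l+pt}$ and taking $\mathcal{O}^*$-invariants via inflation--restriction. Since the paper already records the $\Gamma_{1,[\mathfrak{b}_i]}(\mathfrak{n})$-versions of Lemmas~\ref{lem2}, \ref{lem3}, \ref{lem3.6} and \ref{lem2_3_7}, your direct approach is available; what it costs you is that the Hecke-compatible Shapiro isomorphism must be invoked for $\mathrm{Ind}(\overline{\mathbb{F}}_p\otimes\det^e)$ rather than for trivial coefficients, and the paper only writes out the untwisted case. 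That extension is routine (the same computation goes through because the $\det^e$-factors on the two sides of the diagram match), but you should acknowledge it rather than cite the untwisted proposition as if it covered you.

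The genuine gap is Step~3. You correctly isolate $r=s=p-1$ with $f\mid e$ as the one non-excluded case where $\mathrm{H}^0(\Gamma_{1,[\mathfrak{b}_i]}(\mathfrak{n}),W^{l,t}_{r,s})\ne 0$, but you then assume $\partial$ may be non-zero and attempt an ad hoc workaround (``direct construction of an Eisenstein class at weight two'') that is neither carried out nor supported by anything in the paper. In fact $\partial=0$ here as well, by exactly the dimension count the paper uses: Lemma~\ref{lem3}(2) gives $\mathrm{H}^0(\Gamma_{1,[\mathfrak{b}_i]}(\mathfrak{n}),V^{l,t}_{p-1,p-1})=0$ since $(r,s)\neq(0,0)$, while Lemma~\ref{lem2}(2) and Lemma~\ref{lem3.6}(4) give $\mathrm{H}^0(\Gamma_{1,[\mathfrak{b}_i]}(\mathfrak{n}),\mathrm{U}^e_{p^2-1})\cong\mathrm{H}^0(\Gamma_{1,[\mathfrak{b}_i]}(\mathfrak{n}),W^{l,t}_{p-1,p-1})\cong\overline{\mathbb{F}}_p$ under the hypothesis $f\mid e$. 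Hence the map $\mathrm{H}^0(\mathrm{U})\to\mathrm{H}^0(W)$ is injective between one-dimensional spaces, so it is surjective, and $\partial=0$. With this observation your Step~3 disappears entirely and the case $r=s=p-1$ folds into Step~2; the theorem then follows with no Eisenstein detour needed outside the two genuinely exceptional pairs $(1,p-2)$ and $(p-2,1)$.
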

\begin{proof} 
The proof is divided in two parts. Firstly, we show  that $$ \oplus^h_{ i = 1}{\rm H}^1( \Gamma^1_{1, [\mathfrak{b}_i]}( \mathfrak{n}), V^{ }_{r, s}( \overline{\mathbb{F}}_p ) ) \hookrightarrow \oplus^h_{ i = 1}{\rm H}^1( \Gamma^1_{1, [\mathfrak{b}_i]}( \mathfrak{pn}), \overline{\mathbb{F}}_{p})$$ as $ \overline{\mathbb{F}}_p$-vector spaces except in the exceptional cases named in the statement.  Secondly from this, we use  an inflation  restriction exact sequence and obtain an embedding of Hecke modules  $\oplus^h_{ i = 1}{\rm H}^1( \Gamma_{1, [\mathfrak{b}_i]}( \mathfrak{n}), V^{l, t}_{r, s}( \overline{\mathbb{F}}_p)) \hookrightarrow \oplus^h_{ i = 1}{\rm H}^1( \Gamma_{1, [\mathfrak{b}_i]}( \mathfrak{pn}), \overline{\mathbb{F}}_{p}\otimes det^{l+pt})$.\\
{\bf First part:}\\
The exact sequence \[ 0 \rightarrow V^{}_{r,s}(\overline{\mathbb{F}}_p) \rightarrow  {\rm U}^{}_{ r +ps}(\overline{\mathbb{F}}_p) \rightarrow W^{}_{r, s} \rightarrow 0\]  gives rise to the long exact sequence in cohomology
\begin{eqnarray*} 
0 &\rightarrow &  \oplus^h_{ i = 1}{\rm H }^0( \Gamma^1_{1 , [\mathfrak{b}_i]}(\mathfrak{n} ), V^{ }_{r, s}(\overline{\mathbb{F}}_p))\rightarrow \oplus^h_{ i = 1}{\rm H }^0( \Gamma^1_{1, [\mathfrak{b}_i]}(\mathfrak{ n}), {\rm U}^{ }_{r +ps} (\overline{\mathbb{F}}_p) ) \rightarrow \\ &\rightarrow &   \oplus^h_{ i = 1}{\rm H }^0(\Gamma^1_{1, [\mathfrak{b}_i]}(\mathfrak{n}), W^{}_{r, s})   \rightarrow   \oplus^h_{ i = 1}  {\rm H}^1( \Gamma^1_{1, [\mathfrak{b}_i]}( \mathfrak{ n}), V^{}_{r, s}(\overline{\mathbb{F}}_p) ) \rightarrow \\  &\rightarrow &\oplus^h_{ i = 1} {\rm H }^1( \Gamma^1_{1, [\mathfrak{b}_i]}( \mathfrak{n}), {\rm U}^{ }_{ r +ps}( \overline{\mathbb{F}}_p) ) \rightarrow\oplus^h_{ i = 1} {\rm H }^1( \Gamma^1_{1, [\mathfrak{b}_i]}(\mathfrak{ n}), W^{}_{ r, s} ) \rightarrow \cdots.
\end{eqnarray*}
This is an exact sequence of  $ \overline{\mathbb{F}}_p$-vector spaces.
If  $ r = s = p-1,$  from  Lemmas  \ref{lem2},  \ref{lem3}  and  \ref{lem3.6},   we get the exact sequence of $ \overline{\mathbb{F}}_p$-vector spaces for each $ i = 1,  \cdots,  h:$ \[
0  \rightarrow \overline{\mathbb{F}}_p \rightarrow \overline{\mathbb{F}}_p \rightarrow {\rm H }^1(\Gamma^1_{1, [\mathfrak{b}_i]} (\mathfrak{n} ), V^{}_{r , s}(\overline{\mathbb{F}}_p) ) \rightarrow {\rm H }^1( \Gamma^1_{1, [\mathfrak{b}_i]}( \mathfrak{ n}),  {\rm U }^{}_{r + ps}(\overline{\mathbb{F}}_p)) \rightarrow \cdots.\] 
This means that the third arrow is the null map and hence we have an injection $$ \oplus^h_{i = 1}{\rm  H}^1( \Gamma^1_{1, [\mathfrak{b}_i]}(\mathfrak{ n} ), V^{}_{r, s}(\overline{\mathbb{F}}_p)) \hookrightarrow\oplus^h_{i = 1} {\rm H }^1( \Gamma^1_{1, [\mathfrak{b}_i]}(\mathfrak{n}), {\rm U}^{}_{ r +ps}(\overline{\mathbb{F}}_p)).$$ 
From  Lemmas  \ref{lem2},  \ref{lem3}  and  \ref{lem3.6}, we see that when  $  ( r \neq 1 \,\,\text{or}\,\, s \neq p-2) $  and $  ( r \neq p-2  \,\,\text{or}\,\, s \neq 1),$   we have an exact sequence of  $ \overline{\mathbb{F}}_p$-vector spaces $$ 0 \rightarrow  {\rm H}^1( \Gamma^1_{1, [\mathfrak{b}_i]}( \mathfrak{ n}), V^{ }_{r, s}(\overline{\mathbb{F}}_p))  \rightarrow {\rm H }^1( \Gamma^1_{1 , [\mathfrak{b}_i]}(\mathfrak{ n}), {\rm U }^{ }_{ r +ps}( \overline{\mathbb{F}}_p)) \rightarrow \cdots.$$ 
Therefore in all cases this is an exact sequence of $ \overline{\mathbb{F}}_p$-vector spaces.
From Proposition \ref{Prop2-2},   we know that the representation  $  {\rm U}_{ r+ps } ( \overline{\mathbb{F}}_p )$ is a direct summand of $ {\rm Ind}^{\Gamma^1_{1, [\mathfrak{b}_i]}( \mathfrak{n} ) }_{ \Gamma^1_{1, [\mathfrak{b}_i]}(\mathfrak{pn})}(\overline{\mathbb{F}}_p).$
So, one has an embedding of $ \overline{\mathbb{F}}_p$-vector spaces \[ \oplus^h_{ i = 1}{\rm H }^1( \Gamma^1_{1, [\mathfrak{b}_i]}( \mathfrak{ n}),{\rm  V}^{  }_{r, s}( \overline{\mathbb{F}}_p)) \hookrightarrow \oplus^h_{ i = 1}{\rm H }^1 ( \Gamma^1_{1, [\mathfrak{b}_i]}(\mathfrak{ n}),  {\rm Ind}^{\Gamma_{1, [\mathfrak{b}_i]}( \mathfrak{n} ) }_{ \Gamma^1_{1, [\mathfrak{b}_i]}( \mathfrak{pn} )} ( \overline{\mathbb{F}}_p)).\]
By Shapiro's lemma, one  concludes that we have an injection of $ \overline{\mathbb{F}}_p$-vector spaces \[\alpha:  \oplus^h_{ i = 1}{\rm H }^1(  \Gamma^1_{1, [\mathfrak{b}_i]}(\mathfrak{ n} ), V^{}_{ r, s}(\overline{\mathbb{F}}_p)) \hookrightarrow \oplus^h_{ i = 1}{\rm H }^1( \Gamma^1_{1, [\mathfrak{b}_i]}( \mathfrak{pn})), \overline{\mathbb{F}}_p).\]
Lastly when $  ( r = 1, s = p-2) $  or $  ( r = p-2, s = 1),$  then from  Lemmas \ref{lem2},  \ref{lem3},     \ref{lem3.6} \\ and \ref{lem2_3_7}, we have the exact sequence of $ \overline{\mathbb{F}}_p$-vector spaces
$$ 0 \rightarrow \overline{\mathbb{F}}_p \rightarrow  {\rm H}^1( \Gamma^1_{1, [\mathfrak{b}_i]}( \mathfrak{ n}), V^{ }_{r, s}(\overline{\mathbb{F}}_p) )  \rightarrow {\rm H }^1( \Gamma^1_{1 , [\mathfrak{b}_i]}(\mathfrak{ n}), {\rm U }^{  }_{ r +ps}( \overline{\mathbb{F}}_p)) \rightarrow \cdots. $$
{\bf Second part:}\\
Consider the  inflation-restriction exact sequence
\begin{eqnarray*}  0 \rightarrow  {\rm H}^1(\Gamma_{1, [\mathfrak{b}_i]}(\mathfrak{n})/\Gamma^1_{1, [\mathfrak{b}_i]}(\mathfrak{n})  , (V^{l, t}_{r, s}(\overline{\mathbb{F}}_p))^{\Gamma^1_{1, [\mathfrak{b}_i]}(\mathfrak{n})})  \xrightarrow{infl} {\rm H}^1(\Gamma_{1, [\mathfrak{b}_i]}(\mathfrak{n}), V^{l, t}_{r, s}(\overline{\mathbb{F}}_p)) \xrightarrow{res} \\ \xrightarrow{res} {\rm H}^1(\Gamma^1_{1, [\mathfrak{b}_i]}(\mathfrak{n}), V^{l, t}_{r, s}(\overline{\mathbb{F}}_p))^{\Gamma_{1, [\mathfrak{b}_i]}(\mathfrak{n})/\Gamma^1_{1, [\mathfrak{b}_i]}(\mathfrak{n})} \rightarrow {\rm H}^2(\Gamma_{1, [\mathfrak{b}_i]}(\mathfrak{n})/\Gamma^1_{1, [\mathfrak{b}_i]}(\mathfrak{n}),V^{l, t}_{r, s}(\overline{\mathbb{F}}_p)^{\Gamma^1_{1, [\mathfrak{b}_i]}(\mathfrak{n})}).
\end{eqnarray*}
Because of the assumption concerning  $p$ we have that $$ {\rm H}^1(\Gamma_{1, [\mathfrak{b}_i]}(\mathfrak{n})/\Gamma^1_{1, [\mathfrak{b}_i]}(\mathfrak{n})  , (V^{l, t}_{r, s}(\overline{\mathbb{F}}_p))^{\Gamma^1_{1, [\mathfrak{b}_i]}(\mathfrak{n})})  = {\rm H}^2(\Gamma_{1, [\mathfrak{b}_i]}(\mathfrak{n})/\Gamma^1_{1, [\mathfrak{b}_i]}(\mathfrak{n}),V^{l, t}_{r, s}(\overline{\mathbb{F}}_p)^{\Gamma^1_{1, [\mathfrak{b}_i]}(\mathfrak{n})}) = 0.$$
Then we get the isomorphism of $ \overline{\mathbb{F}}_p$-vector spaces induced by the restriction map:
$$  {\rm H}^1(\Gamma_{1, [\mathfrak{b}_i]}(\mathfrak{n}), V^{l, t}_{r, s}(\overline{\mathbb{F}}_p)) \xrightarrow{\sim}  ({\rm H}^1(\Gamma^1_{1, [\mathfrak{b}_i]}(\mathfrak{n}), V^{0, 0}_{r, s}(\overline{\mathbb{F}}_p))\otimes_{\overline{\mathbb{F}}_p}det^{l+pt})^{\Gamma_{1, [\mathfrak{b}_i]}(\mathfrak{n})/\Gamma^1_{1, [\mathfrak{b}_i]}(\mathfrak{n})}$$
where we have used the isomorphism $${\rm H}^1(\Gamma^1_{1, [\mathfrak{b}_i]}(\mathfrak{n}), V^{l, t}_{r, s}(\overline{\mathbb{F}}_p)) \backsimeq{\rm H}^1(\Gamma^1_{1, [\mathfrak{b}_i]}(\mathfrak{n}), V^{0, 0}_{r, s}(\overline{\mathbb{F}}_p))\otimes_{\overline{\mathbb{F}}_p}det^{l+pt}.$$
Next notice that for all $ 1 \leq i \leq h,$ we have isomorphisms of abelian groups
$$ \Gamma_{1, [\mathfrak{b}_i]}(\mathfrak{n})/\Gamma^1_{1, [\mathfrak{b}_i]}(\mathfrak{n}) \cong \mathcal{O}^* \cong {\Gamma_{1, [\mathfrak{b}_i]}(\mathfrak{pn})/\Gamma^1_{1, [\mathfrak{b}_i]}(\mathfrak{pn})}.$$
From the first part,  when we are in  the situation $  ( r \neq 1 \,\,\text{or}\,\, s \neq p-2 )$  and $  ( r \neq p-2 \,\,\text{or}\,\, s \neq  1),$ then   there is an embedding of $ \overline{\mathbb{F}}_p$-vector spaces:
$$ {\rm H}^1( \Gamma^1_{1, [\mathfrak{b}_i]}(\mathfrak{n}), V_{ r, s}(\overline{\mathbb{F}}_p) ) \hookrightarrow {\rm H}^1( \Gamma^1_{ 1, [ \mathfrak{b}_i]}(\mathfrak{pn}), \overline{\mathbb{F}}_p).$$
When tensoring with $ det^{l+pt},$  we obtain the embedding
$${\rm H}^1( \Gamma^1_{1, [\mathfrak{b}_i]}(\mathfrak{n}), V^{l, t}_{ r, s}(\overline{\mathbb{F}}_p) ) \hookrightarrow {\rm H}^1( \Gamma^1_{ 1, [ \mathfrak{b}_i]}(\mathfrak{pn}), \overline{\mathbb{F}}_p\otimes det^{l+pt}).$$
We next take  $ \mathcal{O}^*$-invariants and we get
$$({\rm H}^1( \Gamma^1_{1, [\mathfrak{b}_i]}(\mathfrak{n}), V^{l, t}_{ r, s}(\overline{\mathbb{F}}_p) ))^{\mathcal{O}^*} \hookrightarrow ({\rm H}^1( \Gamma^1_{ 1, [ \mathfrak{b}_i]}(\mathfrak{pn}), \overline{\mathbb{F}}_p\otimes det^{l+pt}))^{\mathcal{O}^*}.$$
This and the isomorphism induced by the inflation restriction exact sequence  implies that
$${\rm H}^1( \Gamma_{1, [\mathfrak{b}_i]}(\mathfrak{n}), V^{l, t}_{ r, s}(\overline{\mathbb{F}}_p) ) \hookrightarrow ({\rm H}^1( \Gamma^1_{ 1, [ \mathfrak{b}_i]}(\mathfrak{pn}), \overline{\mathbb{F}}_p\otimes det^{l+pt}))^{\mathcal{O}^*}.$$
Using once more the inflation restriction exact sequence for the right hand of this embedding, we derive that 
$${\rm H}^1( \Gamma_{1, [\mathfrak{b}_i]}(\mathfrak{n}), V^{l, t}_{ r, s}(\overline{\mathbb{F}}_p) ) \hookrightarrow {\rm H}^1( \Gamma_{ 1, [ \mathfrak{b}_i]}(\mathfrak{pn}), \overline{\mathbb{F}}_p\otimes det^{l+pt}).$$
This natural map is compatible with the Hecke action, and so this is an injection of Hecke modules.\\
Now when  the cases   $  ( r = 1, s = p-2 )$  or $  ( r = p-2, s = 1)$ hold, then the first part  provides us with an exact sequence
$$  0 \rightarrow \overline{\mathbb{F}}_p \rightarrow  {\rm H}^1( \Gamma^1_{1,[\mathfrak{b}_i]}(\mathfrak{n}), V_{ r, s}(\overline{\mathbb{F}}_p) ) \rightarrow {\rm H}^1( \Gamma^1_{1 , [\mathfrak{b}_i]}(\mathfrak{pn}), \overline{\mathbb{F}}_p ).
$$
This implies that the following  sequences are exact:
$$ 0 \rightarrow \overline{\mathbb{F}}_p\otimes det^{l+pt} \rightarrow  {\rm H}^1( \Gamma^1_{1,[\mathfrak{b}_i]}(\mathfrak{n}), V^{l, t}_{ r, s}(\overline{\mathbb{F}}_p) ) \rightarrow {\rm H}^1( \Gamma^1_{1 , [\mathfrak{b}_i]}(\mathfrak{pn}), \overline{\mathbb{F}}_p\otimes det^{l+pt} ) $$
$$ \Rightarrow  0 \rightarrow (\overline{\mathbb{F}}_p\otimes det^{l+pt})^{\mathcal{O}^*} \rightarrow  {\rm H}^1( \Gamma_{1,[\mathfrak{b}_i]}(\mathfrak{n}), V^{l, t}_{ r, s}(\overline{\mathbb{F}}_p) ) \rightarrow {\rm H}^1( \Gamma_{1 , [\mathfrak{b}_i]}(\mathfrak{pn}), \overline{\mathbb{F}}_p\otimes det^{l+pt} ).$$
Then, when  $ (\overline{\mathbb{F}}_p\otimes det^{l+pt})^{\mathcal{O}^*} = 0, $ the embedding
$$ {\rm H}^1( \Gamma_{1,[\mathfrak{b}_i]}(\mathfrak{n}), V^{l, t}_{ r, s}(\overline{\mathbb{F}}_p) ) \hookrightarrow {\rm H}^1( \Gamma_{1 , [\mathfrak{b}_i]}(\mathfrak{pn}), \overline{\mathbb{F}}_p\otimes det^{l+pt} ) $$
holds. Otherwise,  we know that the obstruction is coming from  $ (\overline{\mathbb{F}}_p\otimes det^{l+pt})^{\mathcal{O}^*}$ and is hence Eisenstein as shown by Lemma \ref{Index}.
\end{proof}
Systems of Hecke eigenvalues arising from $ \overline{\mathbb{F}}_p\otimes det^e$ are Eisenstein\index{Eisenstein} and hence correspond to reducible Galois representations. Because of this, the statement about Serre's conjecture\index{Serre!'s conjecture} is not affected since  it only concerns irreducible mod $p$ Galois representations. Now the  statement related to Serre type questions is as follows.
\begin{proposition}\label{SerreA}
We keep the same conditions as in Theorem \ref{RedThm}.
A positive answer to question $(a)$ on  page \pageref{Serre'squestions} answers positively the question $( b) $ and the reciprocal also holds.
\end{proposition}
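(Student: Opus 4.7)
The plan is to derive both implications from Theorem \ref{RedThm}, together with Proposition \ref{prop3} (reduction to Serre weights), Proposition \ref{Prop2-2} (decomposition of induced modules), and the Hecke-equivariant Shapiro isomorphism established in the previous section.

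For $(a)\Rightarrow (b)$, suppose $\psi$ is realized in $\oplus^h_{i=1}{\rm H}^1(\Gamma_{1,[\mathfrak{b}_i]}(\mathfrak{n}), V)$ for some finite-dimensional $\overline{\mathbb{F}}_p[\tilde{G}]$-module $V$. By Proposition \ref{prop3} one may assume $V$ is irreducible, hence $V\cong V^{l,t}_{r,s}(\overline{\mathbb{F}}_p)$ for admissible parameters $(l,t,r,s)$. Outside of the exceptional pairs $(r,s)\in\{(1,p-2),(p-2,1)\}$, Theorem \ref{RedThm} immediately realizes $\psi$ in $\oplus^h_{i=1}{\rm H}^1(\Gamma_{1,[\mathfrak{b}_i]}(\mathfrak{pn}),\overline{\mathbb{F}}_p\otimes det^{l+pt})$. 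For the exceptional pairs, the proof of Theorem \ref{RedThm} yields instead the exact sequence
$$ 0 \to (\overline{\mathbb{F}}_p\otimes det^{l+pt})^{\mathcal{O}^*}\to {\rm H}^1(\Gamma_{1,[\mathfrak{b}_i]}(\mathfrak{n}), V^{l,t}_{r,s}(\overline{\mathbb{F}}_p))\to {\rm H}^1(\Gamma_{1,[\mathfrak{b}_i]}(\mathfrak{pn}),\overline{\mathbb{F}}_p\otimes det^{l+pt}), $$
whose kernel carries only Eisenstein Hecke systems by Lemma \ref{Index}. As $\rho$ is irreducible by hypothesis, $\psi$ cannot be Eisenstein, so it injects into the weight-two cohomology at level $\mathfrak{pn}$ and $(b)$ holds.

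For $(b)\Rightarrow (a)$, observe that the character $det^e$ extends naturally from $\Gamma_{1,[\mathfrak{b}_i]}(\mathfrak{pn})$ to $\Gamma_{1,[\mathfrak{b}_i]}(\mathfrak{n})$, so the standard projection formula gives ${\rm Ind}^{\Gamma_{1,[\mathfrak{b}_i]}(\mathfrak{n})}_{\Gamma_{1,[\mathfrak{b}_i]}(\mathfrak{pn})}(\overline{\mathbb{F}}_p\otimes det^e)\cong {\rm Ind}^{\Gamma_{1,[\mathfrak{b}_i]}(\mathfrak{n})}_{\Gamma_{1,[\mathfrak{b}_i]}(\mathfrak{pn})}(\overline{\mathbb{F}}_p)\otimes det^e$. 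Combined with the Hecke-equivariant Shapiro isomorphism of the previous section, this yields
$$ {\rm H}^1(\Gamma_{1,[\mathfrak{b}_i]}(\mathfrak{pn}), \overline{\mathbb{F}}_p\otimes det^e)\cong {\rm H}^1\bigl(\Gamma_{1,[\mathfrak{b}_i]}(\mathfrak{n}), {\rm Ind}^{\Gamma_{1,[\mathfrak{b}_i]}(\mathfrak{n})}_{\Gamma_{1,[\mathfrak{b}_i]}(\mathfrak{pn})}(\overline{\mathbb{F}}_p)\otimes det^e\bigr) $$
as Hecke modules. By Proposition \ref{Prop2-2}, the right-hand coefficient module is the finite-dimensional $\overline{\mathbb{F}}_p[\tilde{G}]$-module $\oplus_{d=0}^{p^2-2}{\rm Ind}^{\tilde{G}}_{\tilde{B}}(\overline{\mathbb{F}}_p^{\chi^d})\otimes det^e$. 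Applying Proposition \ref{prop3}, $\psi$ occurs in $\oplus^h_{i=1}{\rm H}^1(\Gamma_{1,[\mathfrak{b}_i]}(\mathfrak{n}), W)$ for some irreducible subquotient $W$, which plays the role of the weight $V$ in statement $(a)$.

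The main subtlety lies in the bookkeeping for direction $(a)\Rightarrow (b)$, specifically in confirming that the kernel identified above is supported on Eisenstein eigenvalue systems. This follows by tracing through the proof of Theorem \ref{RedThm}, where this kernel is shown to originate from $H^0$-invariants of trivial-action coefficients; the Hecke action there is computed in Lemma \ref{Index}, giving eigenvalues $N(\mathfrak{t})+1$ whose associated Galois representations have semisimplification equal to the direct sum of the trivial and the cyclotomic characters, and therefore cannot match an irreducible $\rho$.
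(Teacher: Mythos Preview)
Your proof is correct and follows the same route as the paper's. You supply more detail than the paper does in two places: in $(b)\Rightarrow(a)$ you make explicit the projection formula handling the $det^e$ twist before invoking Shapiro (the paper silently applies Shapiro with trivial coefficients), and in $(a)\Rightarrow(b)$ you spell out why the exceptional pairs $(r,s)$ cause no trouble for an irreducible $\rho$, which the paper relegates to the remark immediately preceding the proposition.
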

\begin{proof}
The part $ (b) \Rightarrow (a)$ is obtained as follows. By Shapiro's lemma the system is realized in 
$$ \oplus^h_{ i = 1}{\rm  H}^1( \Gamma_{1, [\mathfrak{b}_i]}( \mathfrak{ n}), {\rm Ind }^{\Gamma_{1, [\mathfrak{b}_i]}(\mathfrak{n})}_{ \Gamma_{1, [\mathfrak{b}_i]}(\mathfrak{ pn})} (\overline{\mathbb{ F}}_p)).$$ 
By Proposition \ref{prop3}, this system of Hecke eigenvalues  already appears in 
$$ \oplus^h_{ i =1}{\rm  H}^1( \Gamma_{1, [\mathfrak{b}_i]}( \mathfrak{ n}), M)$$  where $ M$ is a simple module from the Jordan-H\"{o}lder series of $ {\rm Ind }^{ \Gamma_{1, [\mathfrak{b}_i]}( \mathfrak{ n})}_{ \Gamma_{1, [\mathfrak{b}_i]}( \mathfrak{ pn})} (\overline{\mathbb{F}}_p).$  This module $M$  is a 
Serre weight\index{Serre! weight}.\\
The part $ (a)  \Rightarrow (b)$ follows from  Theorem \ref{RedThm}.
\end{proof}

\def\refname{{References}}

\end{document}